\documentclass[11pt]{amsart}
\usepackage{geometry}
\usepackage{graphicx}
\usepackage{amssymb}
\usepackage{epstopdf}
\usepackage{amsmath,amscd}
\usepackage{url,verbatim}
\usepackage{mathtools}
\usepackage{enumitem}

\RequirePackage[colorlinks,citecolor=blue,urlcolor=blue]{hyperref}
\usepackage{breakurl}

\theoremstyle{plain}
\textwidth=430pt
\textheight=620pt

\DeclareGraphicsRule{.tif}{png}{.png}{`convert #1 `dirname #1 `/`basename #1 .tif`.png}

\newtheorem{theorem}{Theorem}
\newtheorem{definition}[theorem]{Definition}
\newtheorem{lemma}[theorem]{Lemma}
\newtheorem{proposition}[theorem]{Proposition}

\newtheorem{claim}[theorem]{Claim}

\newcounter{mycount}
\newcounter{mycount2}
\newenvironment{romlist}{\begin{list}{\rm(\roman{mycount2})}%
   {\usecounter{mycount2}\labelwidth=1cm\itemsep 0pt}}{\end{list}}
\newenvironment{numlist}{\begin{list}{\arabic{mycount}.}%
   {\usecounter{mycount}\labelwidth=1cm\itemsep 0pt}}{\end{list}}

\newenvironment{Alist}{\begin{list}{\MakeUppercase{\alph{mycount}}.}%
   {\usecounter{mycount}\labelwidth=1cm\itemsep 0pt}}{\end{list}}
\newenvironment{problist}{\begin{list}{{\alph{mycount}}}%
   {\usecounter{mycount}\leftmargin=0pt\itemindent=1cm\labelwidth=1cm\itemsep 0pt}}{\end{list}}
\newenvironment{Romlist}{\begin{list}{\rm(\Roman{mycount2})}%
   {\usecounter{mycount2}\labelwidth=1cm\itemsep 0pt}}{\end{list}}
   
\numberwithin{equation}{section}
\numberwithin{theorem}{section}
\numberwithin{figure}{section}

\newcommand\HH{{\mathbb H}}
\newcommand\RR{{\mathbb R}}
\newcommand\PP{{\mathbb P}}
\newcommand\qq{\qquad}

\newcommand\De{\Delta}

%{{1}}
%{{j}}
%{{j^2}}

\newcommand\NN{{\mathbb N}}
\newcommand\ZZ{{\mathbb Z}}

\newcommand\La{\Lambda}

\newcommand\ot{1-2\ }
\newcommand\resp{respectively}

\renewcommand\Psi{\La}

\newcommand\df{\textbf}

\title{Mixing time of Markov chains for the 1-2 model}
\author{Zhongyang Li}

\address{Department of Mathematics,
University of Connecticut,
Storrs, Connecticut 06269-1009, USA} \email{zhongyang.li@uconn.edu}
\urladdr{\url{https://mathzhongyangli.wordpress.com}}

\begin{document}
\maketitle

\begin{abstract}
A 1-2 model configuration is a subset of edges of a hexagonal lattice satisfying the constraint that each vertex is incident to 1 or 2 edges. We introduce Markov chains to sample the 1-2 model configurations on 2D hexagonal lattice and prove that the mixing time of these chains is polynomial in the sizes of the graphs for a large class of probability measures.
\end{abstract}

\section{Introduction}

In computer science and statistical physics, Markov chains play an important role in sampling and approximating counting algorithms. Usually the goal was to estimate the rate of convergence to the stationary distribution. In the past few decades, mathematicians, physicists and computer scientists prescribed certain target distance to the stationary distribution; the number of steps required to reach this target is called the \textbf{mixing time} of the chain. Deep connections were found between rapid mixing and spatial properties of spin systems. See \cite{NR98,AF99,Hag02,Je03,MT06,LPW} for more information on Markov chains and mixing.

In this paper, we study Markov chains on a statistical mechanical system called the 1-2 model.
Let $\HH=(V,E)$ be the  2D hexagonal lattice. A 1-2 model configuration $\omega=(V,E_{\omega})$ on $\HH$ is a subgraph of $\HH$, such that each vertex $v\in V$ is incident to one or two edges in $E_{\omega}$. See Figure \ref{fig:30} for an example of an 1-2 model configuration on the hexagonal lattice.

\begin{figure}[htbp]
\includegraphics[width=1.1\textwidth]{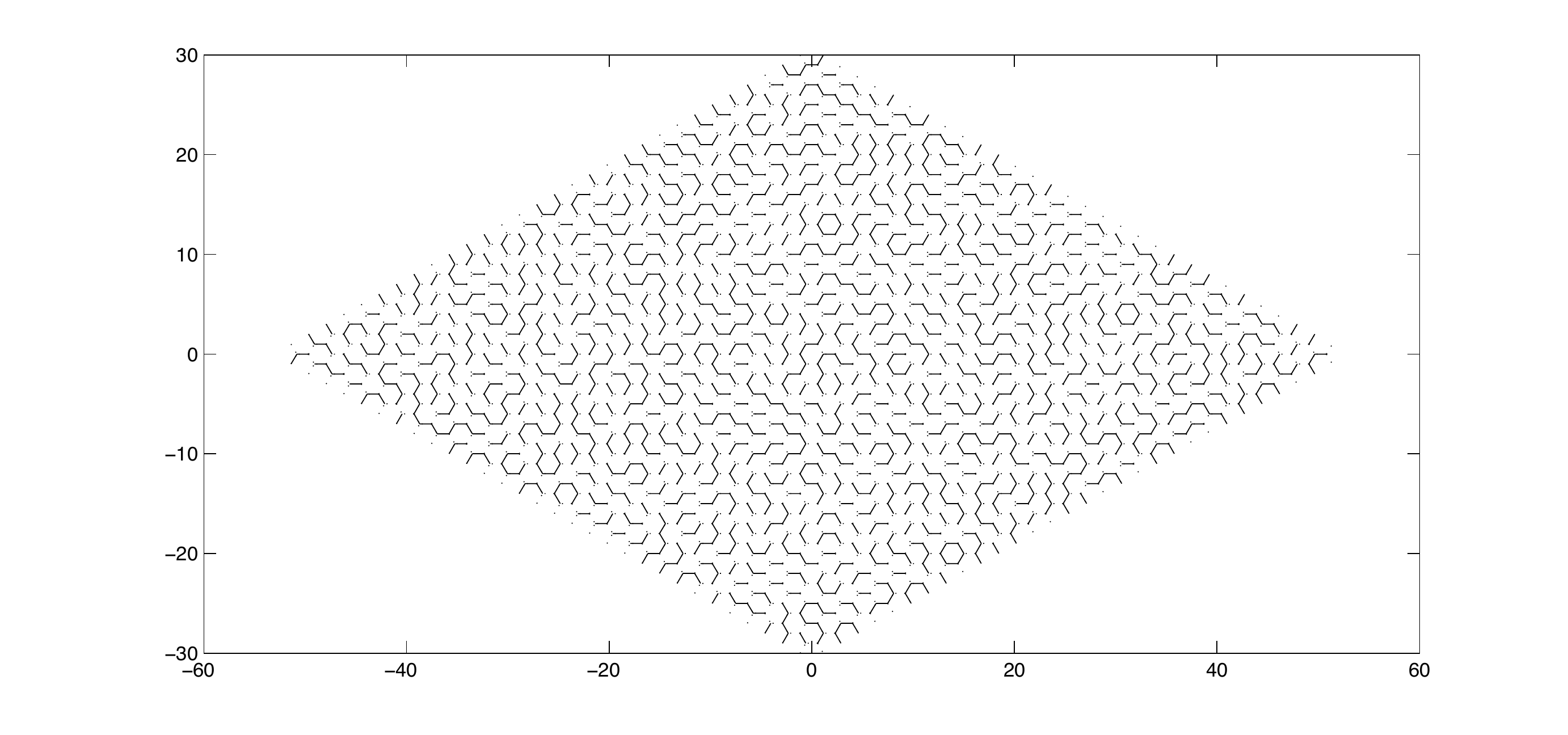}
   \caption{1-2 model configuration on the hexagonal lattice}
   \label{fig:30}
\end{figure}

The 1-2 model on the hexagonal lattice was first studied in \cite{sb}, as a constrained system whose total number of configurations can be counted in polynomial time by applying the holographic algorithm \cite{Val}. Some generalizations of the algorithm were studied in \cite{ZL3} to compute the partition function (weighted sum of all the configurations) of a weighted 1-2 model. The holographic algorithm is efficient in computing the partition function of a large class of vertex models; however, since the holographic algorithm, if applicable, gives a many-to-many correspondence between configurations of a certain vertex model and those of a dimer model on a decorated graph, it is not immediately clear how this algorithm can be applied to study the probability measures and the phase transitions of the vertex model.  A measure-preserving correspondence between 1-2 model configurations on the hexagonal lattice and perfect matchings on a decorated graph was constructed in \cite{ZL}, and a phase transition result was established in \cite{GL1}; see also \cite{GL2} for a summary. 

When assigning each local configuration (configuration restricted at each vertex) a nonnegative weight, we may define a probability measure for 1-2 model configurations on the hexagonal lattice $\HH$ such that the probability of a configuration is proportional to the product of weights of local configurations at vertices. The model is closely related to the dimer model and Ising model, see \cite{GL1}. 
In this paper, we will introduce  Markov chains to sample the 1-2 model, and study mixing time of these chains, by investigating the relation of the 1-2 model and the dimer model and by utilizing the well-known relation between the fast spacing mixing and fast temporal mixing. Note that the Ising model corresponding to the 1-2 model may not be ferromagnetic - therefore the natural stochastic domination results associated to the ferromagnetic Ising model may not be applied here. Instead of considering the single site dynamics, we study the block dynamics. When the local weights of the 1-2 model implies a lower bound on the spectral gap of the block dynamics.

The organization of the paper is as follows. In Section \ref{sc:mc}, we define the Markov chain and state the main theorems. In Section \ref{sc:ir}, we prove a few properties of the Markov chain defined in Section \ref{sc:mc}, including irreducibility, reversibility and aperiodicity. In Section \ref{sc:ct}, we review the path method and the comparison theorem which relates the mixing time of block dynamics to that of single-site dynamics; see also \cite{LPW}. In Section \ref{sc:cl}, we discuss the mixing time for the 1-2 model Markov chain on a rectangular box of the hexagonal lattice with fixed width and very large length. In Section \ref{sc:wm}, we give a sufficient condition of the strong mixing of 1-2 model configurations on an $n\times n$ box of the hexagonal lattice, which implies the fast mixing of the block dynamics (see \cite{DSVW}), and hence the fast mixing of single site dynamics by the comparison theorem.

\section{Markov Chains}\label{sc:mc}

In this section, we review the basic definitions in the theory of Markov chains including the total variation distance and the mixing time. We then define Markov chains whose state space consists of all the 1-2 model configurations on a finite subgraph of the hexagonal lattice, such that the stationary distributions of these chains are the probability measures on the 1-2 model configurations in which the probability of each configuration is proportional to the product of weights at vertices. We then state the main theorems proved in this paper.

\subsection{Total variation distance and mixing time}

\begin{definition}Let $\Omega$ be a finite state space. The \textbf{total variation distance} between two probability distributions $\mu$ and $\nu$ on $\Omega$ is defined by
\begin{eqnarray*}
\|\mu-\nu\|_{TV}=\max_{A\subset\Omega}|\mu(A)-\nu(A)|.
\end{eqnarray*}
The total variation distance also has the following expression (see Proposition 4.2 of \cite{LPW})
\begin{eqnarray*}
\|\mu-\nu\|_{TV}=\frac{1}{2}\sum_{x\in \Omega}|\mu(x)-\nu(x)|.
\end{eqnarray*}
\end{definition}

\begin{definition}Let $\Omega$ be a finite state space. Let $P$ be the transition matrix of a Markov chain whose state space is $\Omega$ and stationary distribution is $\pi$. For $t=1,2,\ldots$, let
\begin{eqnarray*}
d(t):=\max_{x\in \Omega}\|P^t(x,\cdot)-\pi\|_{TV}
\end{eqnarray*}
Then the \textbf{mixing time} is defined by
\begin{eqnarray*}
t_{mix}(\epsilon):=\min\{t:d(t)\leq \epsilon\},
\end{eqnarray*}
and
\begin{eqnarray}
t_{mix}:=t_{mix}\left(\frac{1}{4}\right).\label{mtm}
\end{eqnarray}
\end{definition}

\subsection{1-2 model Markov chains on the hexagonal lattice}\label{mch}

Observe that the hexagonal lattice $\HH$ is a bipartite graph in the sense that each vertex can be colored black and white such that vertices of the same color are not adjacent.

Let $\Lambda=(V_{\Lambda},E_{\Lambda})$ be a finite, connected subgraph of $\HH$. A boundary condition $b_{\Lambda}$ for 1-2 model configurations on $\Lambda$ is given by specifying the configurations on all the vertices outside $\Lambda$. We say a boundary condition $b_{\Lambda}$ is \df{admissible} if there exists a 1-2 model configuration on $\HH$ whose restriction on vertices outside $\Lambda$ is given by $b_{\Lambda}$. 

Assume that the boundary condition $b_{\Lambda}$ is admissible. Let $\Omega_{\Lambda,b_{\Lambda}}$ be the set of all 1-2 model configurations on $\Lambda$ with boundary condition $b_{\Lambda}$. Let $a,b,c > 0$. 
We associate the parameters $a,b,c$ to the local configurations at each vertex  
as indicated in Figure \ref{fig:sign}.

\begin{figure}[htbp]
\centerline{\includegraphics*[width=0.98\hsize]{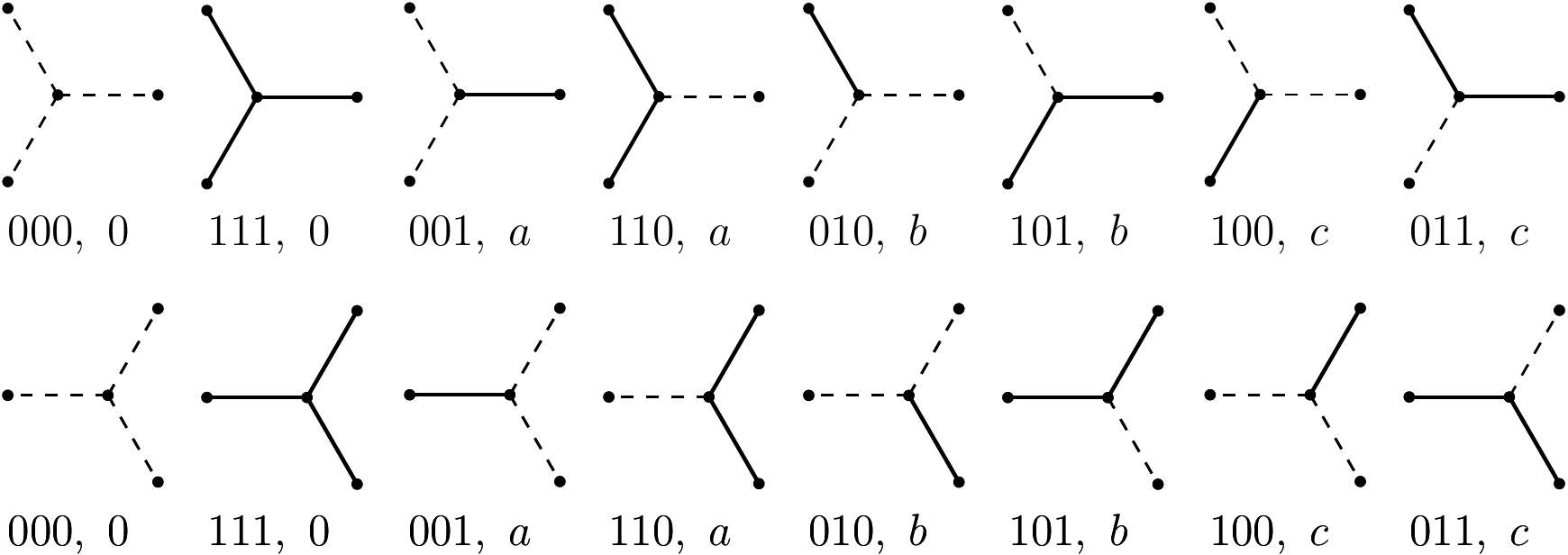}}
   \caption{The eight possible local configurations $\omega|_v$ at a vertex $v$  in the two
   cases of \emph{black} and \emph{white} vertices. The signature
   of each is given, and also the local weight $w(\omega|_v)$ associated with each instance.}
   \label{fig:sign}
\end{figure}

To each configuration $\omega\in \Omega_{\Lambda,b_{\Lambda}}$, we assign the weight 
\begin{equation}\label{eq:pf-2}
w(\omega) = \prod_{v\in V_{\Lambda}} w(\omega|_v),
\end{equation}
where $\omega|_{v}$ is one of the eight possible local configurations in Figure \ref{fig:sign} obtained by restricting $\omega$ to the vertex $v$, and $w(\omega|_{v})\in\{0,a,b,c\}$ is the weight of the local configuration. Note that if $\omega|_{v}$ does not satisfy the constraint that the vertex $v$ has one or two incident present edges, we have $w(\omega|_{v})=0$. These weights give rise to the partition
function
\begin{equation}\label{eq:pf-1}
Z=\sum_{\omega\in\Omega_{\Lambda,b_{\Lambda}}} w(\omega),
\end{equation}
which leads in turn to the probability measure  
\begin{equation}\label{eq:pm}
\mu(\omega) = \frac1Z  w(\omega), \qq\omega\in\Omega_{\Lambda,b_{\Lambda}}.
\end{equation}

 We define a Markov chain on $\Omega_{\Lambda,b_{\Lambda}}$ with transition matrix $P$ as follows. For any two 1-2 model configurations $\omega_1,\omega_2\in\Omega_{\Lambda,b_{\Lambda}}$, $P(\omega_1,\omega_2)>0$ if and only if one of the following conditions is true
\begin{enumerate}
\item $\omega_1=\omega_2$;
\item $\omega_1$ and $\omega_2$ differ at exactly one edge; see Figure \ref{fig:p2};
\item $\omega_1$ and $\omega_2$ differ at exactly one face $f$; on the boundary of $f$ both $\omega_1$ and $\omega_2$ have three present edges, such that there is a way to cyclically order the boundary edges of $f$ by $e_1,\ldots,e_6$, and $e_1,e_3,e_5$ are present (resp.\ absent) edges of $\omega_1$ (resp.\ $\omega_2$), while $e_2,e_4,e_6$ are present (resp.\ absent) edges of $\omega_2$ (resp.\ $\omega_1$). Let $g_1,\ldots g_6$ be the six incident edges of the face $f$ in cyclic order; then in both $\omega_1$ and $\omega_2$, $g_1$, $g_3$ and $g_5$ are present while $g_2$, $g_4$, $g_6$ are absent. See Figure \ref{fig:p3}.
\end{enumerate}

\begin{figure}[htbp]
\centerline{\includegraphics*{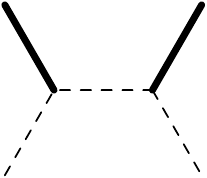}\qquad\qquad\includegraphics*{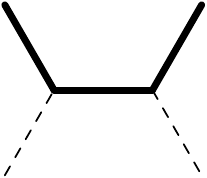}}
   \caption{Two 1-2 model configurations differ at one edge.}
   \label{fig:p2}
\end{figure}

\begin{figure}[htbp]
\centerline{\includegraphics*{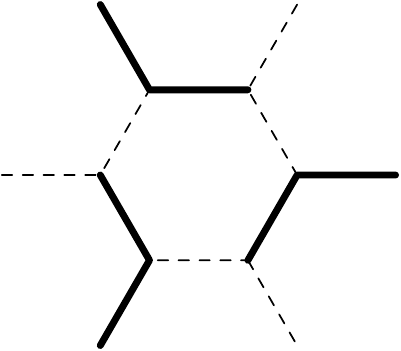}\qquad\qquad\includegraphics*{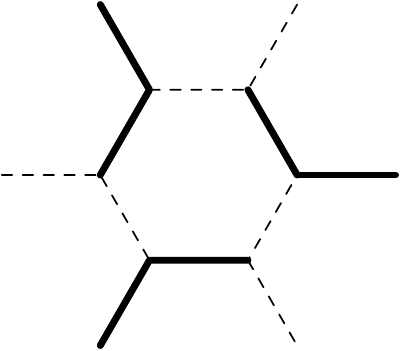}}
   \caption{Two 1-2 model configurations differ at one face.}
   \label{fig:p3}
\end{figure}

The transition matrix $P$ is a function $\Omega_{\Lambda, b_{\Lambda}}\times \Omega_{\Lambda,b_{\Lambda}}\rightarrow [0,1]$. To describe the transition matrix, we further divide the Condition (2) above into sub-cases. Assume that $\omega_1$ and $\omega_2$ differ at exactly one edge $e=(u,v)$. Let $e_1$ and $e_2$ be the two present edges in both $\omega_1$ and $\omega_2$, where $e_1$ (resp.\ $e_2$) shares the endpoint $u$ (resp.\ $v$) with $e$. The following cases might occur
\begin{enumerate}[label=(\alph*)]
\item $e_1$ and $e_2$ are not parallel;
\item $e$ is a horizontal edge, both $e_1$ and $e_2$ are NW-SE edges; $e$ is present in $\omega_1$ but absent in $\omega_2$; 
\item $e$ is a horizontal edge, both $e_1$ and $e_2$ are NW-SE edges; $e$ is present in $\omega_2$ but absent in $\omega_1$; 
\item $e$ is a horizontal edge, both $e_1$ and $e_2$ are NE-SW edges; $e$ is present in $\omega_1$ but absent in $\omega_2$; 
\item $e$ is a horizontal edge, both $e_1$ and $e_2$ are NE-SW edges; $e$ is present in $\omega_2$ but absent in $\omega_1$; 
\item $e$ is a NW-SE edge, both $e_1$ and $e_2$ are horizontal edges; $e$ is present in $\omega_1$ but absent in $\omega_2$; 
\item $e$ is a NW-SE edge, both $e_1$ and $e_2$ are horizontal edges; $e$ is present in $\omega_2$ but absent in $\omega_1$; 
\item $e$ is a NW-SE edge, both $e_1$ and $e_2$ are NE-SW edges; $e$ is present in $\omega_1$ but absent in $\omega_2$; 
\item $e$ is a NW-SE edge, both $e_1$ and $e_2$ are NE-SW edges; $e$ is present in $\omega_2$ but absent in $\omega_1$; 
\item $e$ is a NE-SW edge, both $e_1$ and $e_2$ are NW-SE edges; $e$ is present in $\omega_1$ but absent in $\omega_2$; 
\item $e$ is a NE-SW edge, both $e_1$ and $e_2$ are NW-SE edges; $e$ is present in $\omega_2$ but absent in $\omega_1$; 
\item $e$ is a NE-SW edge, both $e_1$ and $e_2$ are horizontal edges; $e$ is present in $\omega_1$ but absent in $\omega_2$; 
\item $e$ is a NE-SW edge, both $e_1$ and $e_2$ are horizontal edges; $e$ is present in $\omega_2$ but absent in $\omega_1$; 
\end{enumerate}
Let $E_{\Lambda}$, $F_{\Lambda}$ be the edge set, face set of of $\Lambda$, respectively. Let 
\begin{eqnarray*}
C_{\Lambda}:=\frac{\min\{a,b,c\}}{2(|E_{\Lambda}|+|F_{\Lambda}|)\max\{a,b,c\}}
\end{eqnarray*}
\begin{eqnarray}
&&P(\omega_1,\omega_2)\label{tmt}\\
&=&\left\{\begin{array}{cc} C_{\Lambda}& \mathrm{if}\ \omega_1, \omega_2\ \mathrm{satisfy\ Condition\ (3)\ or (2)(a)} ;\\
\frac{C_{\Lambda}b}{c}& \mathrm{if}\ \omega_1, \omega_2\ \mathrm{satisfy\ Condition\ (2)(b)\ or\ (2)(e)}; \\
\frac{C_{\Lambda}c}{b}& \mathrm{if}\ \omega_1, \omega_2\ \mathrm{satisfy\ Condition\ (2)(c)\ or\ (2)(d)}; \\
\frac{C_{\Lambda}a}{c}& \mathrm{if}\ \omega_1, \omega_2\ \mathrm{satisfy\ Condition\ (2)(f)\ or\ (2)(i)}; \\
\frac{C_{\Lambda}c}{a}& \mathrm{if}\ \omega_1, \omega_2\ \mathrm{satisfy\ Condition\ (2)(g)\ or\ (2)(h)}; \\
\frac{C_{\Lambda} b}{a}& \mathrm{if}\ \omega_1, \omega_2\ \mathrm{satisfy\ Condition\ (2)(j)\ or\ (2)(m)}; \\
\frac{C_{\Lambda}a}{b}& \mathrm{if}\ \omega_1, \omega_2\ \mathrm{satisfy\ Condition\ (2)(k)\ or\ (2)(l)}; \\
0& \mathrm{if}\ \omega_1\neq \omega_2,\ \mathrm{and}\ \omega_1,\omega_2\ \mathrm{satisfy\ neither\ (2)\ nor\ (3)};\\
 1-\sum_{\omega'\in \Omega_{\Lambda,b_{\Lambda}},\omega'\neq \omega}P(\omega,\omega'),&\mathrm{if}\ \omega_1=\omega_2=\omega. \end{array}\right.\notag
\end{eqnarray}
Note that the value of $C_{\Lambda}$ guarantees that 
\begin{eqnarray*}
1-\sum_{\omega'\in \Omega_{\Lambda,b_{\Lambda}},\omega'\neq \omega}P(\omega,\omega')\geq \frac{1}{2}.
\end{eqnarray*}

Here are the main theorems proved in this paper concerning mixing times of Markov chains of for the 1-2 model on the hexagonal lattice.

\begin{theorem}\label{m1}Let $\Lambda_{k,n}$ be a rectangular $k\times n$ box of the hexagonal lattice, where $k$ is the fixed width of the rectangle, and $n(\gg k)$ is the length of the rectangle.  Let $b_{k,n}$ be an admissible boundary condition for 1-2 model configurations on $\Lambda_{k,n}$ given by specifying the states of all edges outside $\Lambda_{k,n}$. Let $\Omega_{\Lambda_{k,n},b_{k,n}}$ be the set of all the 1-2 model configurations on $\Lambda_{k,n}$ with boundary condition $b_{k,n}$. The Markov chain with state space $\Omega_{\Lambda_{k,n},b_{k,n}}$ and transition matrix $P$ defined by (\ref{tmt}) has mixing time $t_{mix}$ satisfying
\begin{eqnarray*}
B(k)n\leq t_{mix}\leq C(k)n^2,
\end{eqnarray*}
where $C(k)>0, B(k)>0$ are a constant depending on $k$ and independent of $n$, and $t_{mix}$ is defined by (\ref{mtm}).
\end{theorem}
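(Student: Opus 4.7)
The plan is to prove the upper and lower bounds separately.

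\textbf{Upper bound.} Since $k$ is fixed while $n \to \infty$, the rectangle $\Lambda_{k,n}$ is essentially one-dimensional, and the natural approach is block dynamics combined with the comparison theorem from Section \ref{sc:ct}. First, partition $\Lambda_{k,n}$ into $m = \Theta(n)$ overlapping blocks $B_1,\ldots,B_m$ of constant length $L = L(k)$ in the long direction. Each block contains $O(1)$ configurations uniformly in $n$, so the heat-bath chain that picks a block uniformly and resamples its interior conditional on the complement mixes in $O(1)$ time within each block. I would bound the spectral gap of the block dynamics by $\Omega(1/n^2)$ using a canonical-paths argument: pair each $\omega,\omega' \in \Omega_{\Lambda_{k,n},b_{k,n}}$ by the path that updates one block at a time from left to right; each path has length $O(n)$ and the maximum congestion through any single transition is $O(n)$. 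Finally, apply the comparison theorem to transfer this spectral gap to the single-step chain $P$ of \eqref{tmt}: each block update can be simulated by a uniformly bounded (in $n$) sequence of moves of $P$, whose transition probabilities are bounded below by quantities depending only on $k$ and on the ratios $\max\{a,b,c\}/\min\{a,b,c\}$. Absorbing all such constants into $C(k)$ yields $t_{mix}(P) \leq C(k) n^2$.

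\textbf{Lower bound.} For the lower bound I would use the variational Poincar\'e inequality $\gamma \leq \mathcal{E}(f,f)/\mathrm{Var}_\pi(f)$ applied to a Lipschitz test function such as $f(\omega)$ equal to the number of present horizontal edges of $\omega$. By inspection of the transitions (2) and (3) allowed in \eqref{tmt}, each non-trivial transition changes $f$ by at most a constant depending only on $k$, so $\mathcal{E}(f,f) \leq K_1(k)$ uniformly in $n$. On the other hand, a transfer-matrix argument on the strip of fixed width $k$ shows that $\pi$ has exponentially decaying correlations along the length direction, hence $\mathrm{Var}_\pi(f) \geq K_2(k)\, n$ (the horizontal-edge counts in far-apart columns are nearly independent under $\pi$, giving a variance linear in $n$). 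Therefore $t_{rel} = 1/\gamma \geq (K_2/K_1)\, n$, and the standard bound $t_{mix} \geq (t_{rel}-1)\log 2$ yields $t_{mix} \geq B(k) n$.

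\textbf{Main obstacle.} The principal difficulty lies in the upper-bound argument: choosing $L = L(k)$ large enough that the overlaps between adjacent blocks screen sufficiently for the canonical-path construction to yield the claimed $O(n)$ congestion, while keeping $L$ independent of $n$ and absorbing the dependence on $a,b,c$ into the constant $C(k)$. This reduces to exponential decay of correlations for the 1-2 model on the infinite strip of width $k$, which in turn follows from the Perron-Frobenius gap of the transfer matrix along the strip. The lower-bound argument is more routine once the linear variance lower bound for $f$ is established, which again comes from the same transfer-matrix spectral gap.
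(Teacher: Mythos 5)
Your upper bound has a genuine quantitative gap. A canonical-paths estimate on the block dynamics of the kind you describe gives $\gamma_B=\Omega(1/n^2)$ at best: one factor of $n$ comes from the path length and one from the fact that a given block is refreshed with probability $1/n$ per step, and the canonical-path method always loses the path-length factor. Transferring this to the single-move chain $P$ of \eqref{tmt} by the comparison theorem (Theorem \ref{tm41}) gives $\gamma(P)=\Omega(1/n^2)$, but the only available passage from a spectral-gap bound to a mixing-time bound is Theorem \ref{tm42}, which carries the factor $\log\left(1/(\epsilon\pi_{\min})\right)=\Theta(n)$ for a $k\times n$ box since $\pi_{\min}$ is exponentially small in $n$. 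Your argument therefore yields $t_{mix}=O(n^3)$, one factor of $n$ short of the claim. To reach $O(n^2)$ you need $\gamma_B=\Omega(1/n)$, which canonical paths cannot deliver; the paper instead obtains it by path coupling: two block-dynamics chains started from configurations differing at a single edge or face are coupled by updating the selected block sequentially in $2\times k$ sub-blocks, and the screening property (Claim \ref{c53}) guarantees each sub-block agrees after its update with probability at least a constant $C_5(k)>0$, giving $\mathbb{E}_{\sigma,\tau}\rho(X_1,Y_1)\leq e^{-1/n}\rho(\sigma,\tau)$ and hence $\gamma_B\geq 1/n$ by Theorem \ref{tm53}. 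Only then does Theorem \ref{tm42} give $t_{mix}\leq O(n)\cdot O(n)=O(n^2)$.

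Your lower bound takes a genuinely different route from the paper, which simply invokes the diameter bound (Theorem \ref{tm44}); the variational argument via Theorem \ref{tm43} is workable, but the step $\mathrm{Var}_\pi(f)\geq K_2(k)\,n$ is not free: exponential decay of correlations gives $\mathrm{Var}_\pi(f)\sim\sigma^2 n$ with $\sigma^2=\sum_j\mathrm{Cov}(f_0,f_j)$, and you must still rule out the degenerate case $\sigma^2=0$. A cleaner test function is $f=\mathbf{1}\{e_0\ \text{present}\}$ for a single non-forced edge $e_0$: then $\mathrm{Var}_\pi(f)=\Theta(1)$, while every transition of \eqref{tmt} that changes $f$ has probability $O(C_{\Lambda})=O(1/n)$, so $\mathcal{E}(f,f)=O(1/n)$ and $t_{rel}=\Omega(n)$ follows without any correlation analysis.
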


Theorem \ref{m1} is proved in Section \ref{sc:cl}.

\begin{theorem}\label{m2}Let $\Lambda_{n}$ be an $n\times n$ box of the hexagonal lattice $\HH$.  Let $b_{\Lambda_n}$ be an admissible boundary condition for 1-2 model configurations on $\Lambda_{n}$ given by specifying the states of all edges outside $\Lambda_{n}$. Let $\Omega_{\Lambda_n,b_{\Lambda_n}}$ be the set of all the 1-2 model configurations on $\Lambda_{n}$ with boundary condition $b_{\Lambda_n}$. Assume that the local weights $a,b,c>0$ are such that condition $F\left(m,\frac{1}{15}\right)$ holds (see Definition \ref{d62}) for some fixed positive integer $m$, then when $n$ is sufficiently large the Markov chain with state space $\Omega_{\Lambda_n,b_{\Lambda_n}}$ and transition matrix $P$ defined by (\ref{tmt}) has mixing time $t_{mix}$ satisfying
\begin{eqnarray*}
B n^2\leq t_{mix}\leq C n^4,
\end{eqnarray*}
where $B,C>0$ is a constant independent of $n$, and $t_{mix}$ is defined by (\ref{mtm}).
\end{theorem}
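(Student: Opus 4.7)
The plan is to prove the upper and lower bounds separately: the upper bound via the strong-spatial-mixing $\Rightarrow$ block-dynamics $\Rightarrow$ single-site-dynamics pipeline sketched in the introduction, and the lower bound via a speed-of-information (diameter) argument on the state space.

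For the upper bound $t_{mix}\le Cn^4$, I would proceed in three reductions. First, condition $F(m,1/15)$ (to be defined in Section \ref{sc:wm}) should be a quantitative strong spatial mixing property on boxes of scale $m$, with the constant $1/15$ chosen precisely so that the contraction factor appearing in the subsequent coupling argument is strictly less than $1$. Second, I would consider the heat-bath block dynamics on $\Lambda_n$ that picks an $m\times m$ subblock uniformly at random and resamples the configuration inside from the conditional 1-2 measure given its boundary; following \cite{DSVW}, strong spatial mixing at the prescribed rate forces the spectral gap of this chain to be bounded below by a positive constant independent of $n$, so its mixing time is at most $O(n^2\log n)$ (the $n^2$ factor coming from the number of blocks, and $\log(1/\pi_{\min})=O(n^2)$ from the standard relaxation/mixing inequality). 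Third, I would apply the comparison theorem reviewed in Section \ref{sc:ct}: each block resampling is realized by a canonical sequence of $O(1)$ local moves of $P$ defined in (\ref{tmt}), and the associated congestion is polynomial in $n$, so the spectral gap of $P$ is at most a polynomial factor smaller than that of the block chain. Keeping track of the exponents gives $t_{mix}(P)\le Cn^4$.

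For the lower bound $t_{mix}\ge Bn^2$, I would use the fact that each non-lazy step of $P$ modifies at most three edges (one edge under condition (2), three boundary edges of a face under condition (3)). Hence, starting from any configuration $\omega^*$, the law of the chain at time $t$ is supported on the Hamming ball of radius $3t$ around $\omega^*$. Using admissibility of $b_{\Lambda_n}$ and positivity of $a,b,c$, one produces configurations $\omega^*$ and $\omega^{**}$ both of positive $\mu$-mass whose Hamming distance is $\Theta(n^2)$; choosing $\omega^*$ so that the Hamming ball of radius $n^2/100$ about it has $\mu$-mass at most $1/2$ forces $\|P^t(\omega^*,\cdot)-\mu\|_{TV}>1/4$ for all $t\le n^2/300$, giving the claimed $B n^2$.

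The hardest step will be the second reduction above, verifying that $F(m,1/15)$ yields a constant lower bound on the block-dynamics spectral gap. The DSVW framework is typically developed for spin systems with unconstrained single-site alphabets, whereas the 1-2 model carries the hard vertex constraint of degree $1$ or $2$, which couples neighbors nontrivially and makes naive single-site coupling unavailable. I see two viable routes: either adapt the coupling argument of \cite{DSVW} directly to the 1-2 model, using the local moves of conditions (2) and (3) to couple pairs of configurations that agree on the block boundary, or route through the measure-preserving bijection between the 1-2 model on $\HH$ and the dimer model on a decorated graph constructed in \cite{ZL}, transferring $F(m,1/15)$ to the dimer side where the block-dynamics machinery is standard. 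The first route is more self-contained but requires a bespoke coupling; the second is cleaner at the cost of carefully tracking the bijection near the boundary of each block.
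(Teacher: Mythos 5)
Your proposal follows essentially the same route as the paper: condition $F(m,1/15)$ is used (via a rooms-and-corridors disagreement coupling in the style of van den Berg) to establish strong spatial mixing, which feeds a path-coupling contraction estimate for the constant-size block dynamics, and the comparison theorem with $O(1)$-length canonical paths of single-edge/single-face moves (possible precisely because blocks have constant size, so the congestion ratio is $O(1)$ rather than merely polynomial in $n$) transfers the resulting $\Omega(n^{-2})$ block-dynamics gap to the chain $P$, giving $t_{mix}\le Cn^4$, while the lower bound is the diameter argument you describe. The only slips are cosmetic and do not affect the exponents: a type-(3) face move flips six edges rather than three, and the discrete-time one-block-per-step chain has spectral gap $\Theta(n^{-2})$ rather than a constant (your stated $O(n^2\log n)$ block mixing time is consistent with the correct value).
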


Theorem \ref{m2} is proved in Section \ref{sc:wm}.

\section{Reversibility and Irreducibility}\label{sc:ir}

In this section, we prove properties of the Markov chain defined in Section \ref{mch}. These properties are important for later analysis of the chain, including the application of the comparison method.  In Proposition \ref{ir}, we discuss the irreducibility of the chain. In Proposition \ref{re}, we discuss the reversibility and the stationary distribution of the chain. In Proposition \ref{ap}, we discuss the aperiodicity and eigenvalues of the transition matrix.

\begin{proposition}\label{ir}Assume that $\Lambda=\Lambda_{m,n}$, where $\Lambda_{m,n}$ is an $m\times n$ box of the hexagonal lattice. Then the Markov chain defined by (\ref{tmt}) is irreducible on $\Omega_{\Lambda,b_{\Lambda}}$, for any admissible boundary condition $b_{\Lambda}$ and parameters $a,b,c>0$.
\end{proposition}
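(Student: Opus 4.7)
The plan is to prove that the state space $\Omega_{\Lambda,b_{\Lambda}}$ is connected under the moves of types (2) and (3); since each such move is paired with its inverse (for instance, (2)(b) with (2)(c), and (3) with itself), this is equivalent to irreducibility of the Markov chain.

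First I would fix an arbitrary reference configuration $\omega^{\ast} \in \Omega_{\Lambda,b_{\Lambda}}$ (available since $b_{\Lambda}$ is admissible) and show that every $\omega \in \Omega_{\Lambda,b_{\Lambda}}$ can be transformed into $\omega^{\ast}$ by finitely many allowed moves. The natural measure of distance is the symmetric difference $D := E_{\omega} \triangle E_{\omega^{\ast}} \subset E_{\Lambda}$. Since both $\omega$ and $\omega^{\ast}$ satisfy the 1-2 constraint, at each vertex $v$ the number of $D$-edges meeting $v$ lies in $\{0,1,2,3\}$, and it is even if and only if $\deg_{\omega}(v) = \deg_{\omega^{\ast}}(v)$. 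Hence $D$ decomposes as a union of simple paths (with endpoints at degree-mismatch vertices) and simple cycles, possibly sharing degree-three vertices.

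The core step is induction on $|D|$. If $D$ contains an isolated edge $e = (u,v)$, then $\omega$ and $\omega^{\ast}$ agree on every edge incident to $u$ or $v$ except $e$ itself, so toggling $e$ is a valid type-(2) move and strictly reduces $|D|$. Otherwise every component of $D$ has at least two edges, and I would locate a hexagonal face $f \subset \Lambda$ on which either Condition (3) applies (a full alternating rotation of $\partial f$), or one of the sub-conditions (2)(a)--(2)(m) applies to a boundary edge of $f$; the resulting move strictly decreases $|D|$.

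The main obstacle is this last assertion: ensuring that such a reducing face always exists when $D \neq \emptyset$ has no isolated-edge component. It amounts to a finite case analysis on the local shape of $D$ at a conveniently chosen vertex, using that $\HH$ has only three edge directions and that the thirteen geometric sub-cases (2)(a)--(2)(m) together with (3) exhaust all local possibilities in which a short reducing move is available. An alternative route, sidestepping this case analysis, is to invoke the measure-preserving bijection of \cite{ZL} between 1-2 configurations on $\HH$ and dimer configurations on a decorated bipartite graph, and to reduce to the classical fact that dimer configurations on a simply connected planar bipartite graph are connected under face rotations.
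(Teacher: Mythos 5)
Your reduction to connectivity under moves (2) and (3), and your treatment of the case where the symmetric difference $D=E_{\omega}\symdif E_{\omega^{\ast}}$ contains an isolated edge, are both fine. The genuine gap is exactly where you flag it, and it is not closable by ``a finite case analysis on the local shape of $D$ at a conveniently chosen vertex'': the obstruction to finding a reducing move is not local. When $D$ contains an alternating cycle, toggling an edge $e\in D$ that is present in $\omega$ is legal only if the third (non-$D$) edge at each endpoint of $e$ is present, and similarly for absent edges; whether this holds is governed by the states of edges \emph{outside} $D$, and when it fails along $D$ one must follow a chain of forced edges into the region enclosed by the cycle until a legal move is found, using a whole ordered sequence of moves before the distance to $\omega^{\ast}$ drops. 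This is precisely what the paper does: it transfers the problem to the dimer model on the decorated graph $\HH_{\De}$ via Lemma \ref{sm}, replaces your potential $|D|$ by the total number of hexagonal-lattice edges enclosed by the cycles of the dimer symmetric difference (an ``area'' rather than a ``length''), and runs an exploration argument (Lemmas \ref{l33} and \ref{l34}) producing a finite ordered sequence of Type-I/Type-II rotations that decreases that area. Your claim that a single move always strictly decreases $|D|$ is the assertion that this exploration terminates after one step; you have not shown it, and the structure of the paper's proof indicates it cannot be taken for granted.

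Your fallback route is also not sound as stated. The decorated graph of \cite{ZL} contains triangle faces, so it is not bipartite, and the ``classical fact'' about connectivity of dimer configurations under face rotations is a bipartite statement (proved via height functions). More importantly, even granting some face-rotation connectivity on $\HH_{\De}$, the moves available to the Markov chain correspond only to rotations along Type-I and Type-II cycles, which are unions of several faces of $\HH_{\De}$ (including triangles) and are not themselves faces; rotating along a single triangle or quadrilateral of $\HH_{\De}$ does not correspond to any legal 1-2 model move. Showing that this restricted family of cycles suffices is the actual content of the paper's Lemmas \ref{l33} and \ref{l34}, so the reduction does not come for free. The paper's cylinder counterexample following the proof shows that irreducibility genuinely depends on simple connectivity, so any correct argument must engage with the global structure rather than a local case check.
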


Before proving Proposition \ref{ir}, we first recall that each 1-2 model configuration $\omega$ on $\HH$ corresponds to a perfect matching $D_{\omega}$ on a decorated graph $\HH_{\Delta}=(V_{\Delta},E_{\Delta})$, constructed in \cite{ZL}. See Figure \ref{fig:12con}, where the hexagonal lattice $\HH$ is represented by solid lines of the left graph, and dashed lines on the right graph; the 1-2 model configuration $\omega$ is represented by thick solid lines on the left graph; the decorated graph $\HH_{\Delta}$ is represented by solid lines on the right graph; and the dimer configuration $D_{\omega}$ is represented by thick solid lines on the right graph. 

We now explain the correspondence from $\omega$ to $D_{\omega}$. 
The vertex set $V$ of $\HH$ is a proper subset of the vertex set $V_{\Delta}$ of $\HH_{\Delta}$. Each vertex $v\in V$ is incident to three edges in $E_{\Delta}$. More precisely, the three incident edges of $v\in V$ in $E_{\Delta}$ are bisectors of the three angles of $\HH$ at $v$. A bisector edge in $E_{\Delta}$ (i.e. an edge in $E_{\Delta}$ incident to a vertex $v\in V$) is present in $D_{\omega}$ if and only if the two sides of the angle in $\HH$ have the same state in $\omega$; i.e. either both are present or both are absent. Given the constraint that $v\in V$ has one or two incident present edges in $\omega$, the vertex $v$ has exactly one incident present edge in $D_{\omega}$, according to the correspondence described above. The configuration on bisector edges can be uniquely extended to a dimer configuration on $\HH_{\Delta}$, since each face has an even number of present bisector edges in the configuration.

\begin{figure}[htbp]
\includegraphics[width=0.9\textwidth]{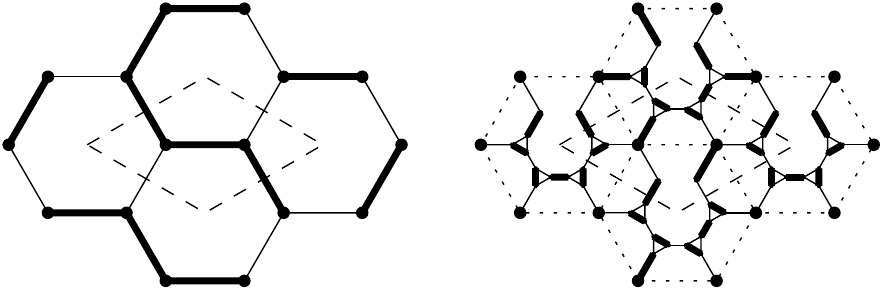}
\caption{Part of a \ot configuration $\omega$ on $\HH$ is illustrated on the left graph, and the corresponding dimer (sub)configuration $D_{\omega}$
on $\HH_{\De}$ is illustrated on the right graph.
When two edges with a common vertex of $\HH$ have the same state in $\omega$,
the corresponding `bisector edge' is present in $D_{\omega}$.
The states of the bisector edges determine the dimer configuration on the rest of $\HH_{\De}$.}
\label{fig:12con}
\end{figure}

The correspondence described above is 2-to-1 because if $\omega'$ is the 1-2 model configuration obtained from $\omega$ by making all the present edges of $\omega$ absent, and all the absent edges of $\omega$ present, then $\omega$ and $\omega'$ correspond to the same dimer configuration on $\HH_{\Delta}$. However, restricted to 1-2 model configurations on $\HH_{\Lambda}$ with given admissible boundary condition $b_{\Lambda}$ the above correspondence is a bijection.

A \textbf{cycle} in $\HH_{\Delta}$ is an alternating sequence $v_0,e_1,v_2,\ldots,v_{n-1},e_n,v_n$ of vertices and edges of $\HH_{\Delta}$, such that
\begin{itemize}
\item $v_i\in V_{\Delta}$, for $0\leq i\leq n$; and 
\item $e_i=(v_{i-1},v_i)\in E_{\Delta}$, for $1\leq i\leq n$; and
\item $v_0=v_n$; and
\item $v_i\neq v_j$, if $(i,j)\notin\{ (0,n),(n,0)\}$.
\end{itemize}
We consider two special types of cycles of $\HH_{\Delta}$. A \df{Type-I cycle} of $\HH_{\Delta}$ is a cycle enclosing exactly one edge of the hexagonal lattice, while a \df{Type-II cycle} of $\HH_{\Delta}$ is a cycle enclosing exactly one face of $\HH$.  See Figure \ref{fig:t12c} for examples of Type-I and Type-II cycles. Note that for each edge of $\HH$, there are different Type-I cycles enclosing the edge. Two Type-I cycles of $\HH_{\Delta}$ enclosing the same edge of $\HH$ may differ by a few triangles, but they occupy exactly the same set of edges incident to vertices of $\HH$. Indeed, if we contract every triangle face of $\HH_{\Delta}$ into a degree-3 vertex, any two Type-I cycles enclosing the same edge have the same image under such a contraction. Similar results hold for different Type-II cycles of $\HH_{\Delta}$ enclosing the same face of $\HH$.

\begin{figure}[htbp]
\includegraphics[width=0.6\textwidth]{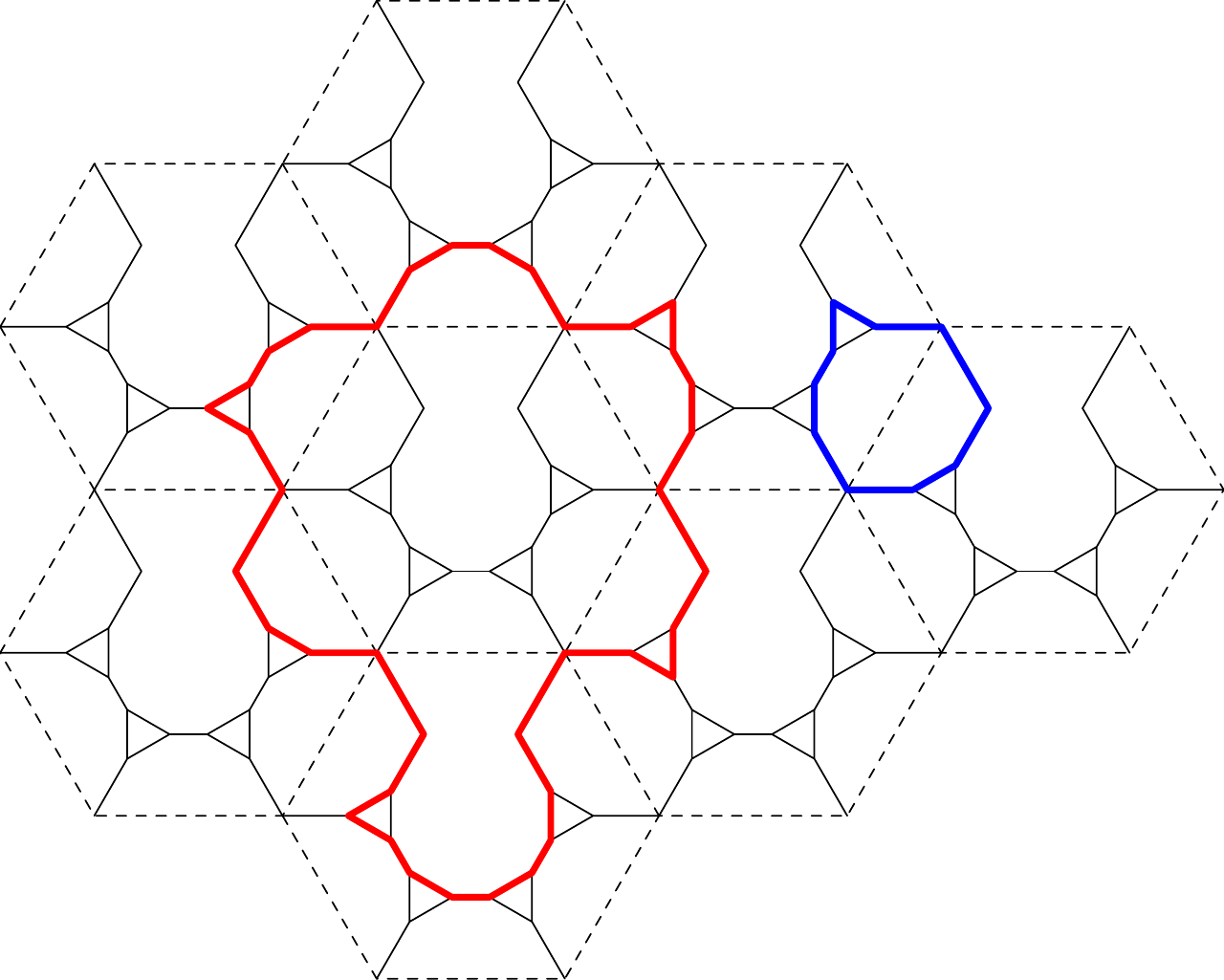}
\caption{Type-I cycle and Type-II cycle: Dashed lines represent the hexagonal lattice, solid black lines represent the decorated graph $\HH_{\Delta}$, red lines represent a Type-II cycle, and blue lines represent a Type-I cycle.}
\label{fig:t12c}
\end{figure}

Let $\tilde{b}_{\Lambda}$ be the boundary condition of dimer configurations obtained from the boundary condition of the 1-2 model configuration $b_{\Lambda}$, by the correspondence between 1-2 model configurations on $\HH$ and dimer configurations on $\HH_{\Delta}$ described above.

\begin{lemma}\label{sm} Let $\Lambda$ be a finite subgraph of $\HH$. Let $\omega_1,\omega_2\in \Omega_{\Lambda,b_{\Lambda}}$ be two 1-2 model configurations on $\Lambda$ with boundary condition $b_{\Lambda}$. Let $D_1$ (resp. $D_2$) be the dimer configuration corresponding to $\omega_1$ (resp. $\omega_2$). Consider the Cases (1)-(3) of $\omega_1$ and $\omega_2$ as given in Section \ref{mch}.
\begin{enumerate}[label=\Roman*.]
\item The 1-2 model configurations $\omega_1$ and $\omega_2$ differ at exactly one edge $e\in E$ as in Case (2), if and only if $D_1\cup D_2$ consists of doubled edges and a unique Type-I cycle enclosing the edge $e$.
\item The 1-2 model configurations $\omega_1$ and $\omega_2$ differ at exactly one face $f$ of $\HH$ as in Case (3), if and only if $D_1\cup D_2$ consists of doubled edges and a unique Type-II cycle enclosing the face $f$.
\end{enumerate}

\end{lemma}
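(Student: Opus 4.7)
The plan is to reduce both parts of the lemma to a local analysis of how the bisector edges of $\HH_{\De}$ change between $D_1$ and $D_2$. Two preliminary facts drive everything. First, the bisector edge at a vertex $u$ of $\HH$ bisecting the angle between two hexagonal edges $e,e'$ at $u$ is present in $D_{\omega}$ if and only if $e$ and $e'$ have the same state in $\omega$, and the 1-2 constraint forces exactly one bisector to be present at each $u$: among three incident edges with $k\in\{1,2\}$ of them present, the number of coinciding pairs $\binom{k}{2}+\binom{3-k}{2}$ is always one. Second, as stated immediately before the lemma, the bisector configuration uniquely extends to a dimer configuration on $\HH_{\De}$. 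Moreover, since $D_1,D_2$ are perfect matchings, $D_1\symdif D_2$ is a disjoint union of alternating cycles, and $D_1\cup D_2$ is the union of these cycles with the doubled edges of $D_1\cap D_2$.

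For the forward direction I would compute, at each vertex of $\HH$ where the two configurations locally differ, which bisectors flip. In Case (2), with $\omega_1,\omega_2$ differing only at $e=(u,v)$: at $u$ exactly the two bisectors adjacent to $e$ flip (a direct check from flipping $s_e$ while fixing $s_{e_1},s_{e_2}$), while the third bisector, opposite to $e$ inside the triangle at $u$, is unchanged; the same holds at $v$, and no bisector elsewhere is affected. By the unique-extension property, $D_1\symdif D_2$ is supported in a neighborhood of $e$, hence consists of a single cycle threading these four changing bisectors together with non-bisector edges running between the triangles at $u$ and at $v$; such a cycle encloses precisely the edge $e$ and is by definition a Type-I cycle. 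The same scheme applies to Case (3): at each of the six vertices on $\partial f$, the two bisectors adjacent to the outgoing non-face edge flip, while the bisector sitting between the two face-edges is preserved; concatenating these six local arcs produces a single cycle enclosing exactly the face $f$, namely a Type-II cycle.

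For the converse direction I would run the argument backwards. Given that $D_1\cup D_2$ consists of doubled edges plus a single Type-I (resp.\ Type-II) cycle, the set of bisectors flipped from $D_1$ to $D_2$ is read off from the cycle. At a vertex $u$ of $\HH$ whose triangle the cycle visits, exactly two specified bisectors are flipped; one then checks that the only local 1-2 modification at $u$ producing these flips while preserving the third bisector is the intended one, namely flipping $s_e$ alone (in Case (2)) or flipping both face-edges at $u$ while leaving the non-face edge fixed (in Case (3)). The main obstacle is ruling out the competing local alternative at $u$, namely simultaneously flipping the \emph{two other} incident hexagonal edges, which at $u$ alone induces the same pair of bisector flips. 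That alternative, however, must propagate: flipping one of those edges necessarily flips two bisectors at its far endpoint, a vertex not on the prescribed cycle, contradicting the assumption that $D_1\symdif D_2$ is exactly the stated Type-I or Type-II cycle. Together with the bijection between admissible 1-2 configurations on $\Lambda$ and dimer configurations on $\HH_{\De}$ with the induced boundary, this forces $\omega_2$ to differ from $\omega_1$ exactly as described in Case (2) or Case (3), completing the lemma.
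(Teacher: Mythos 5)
Your proposal is correct and follows essentially the same route as the paper: both arguments track which bisector edges of $\HH_{\De}$ change state at each vertex of $\HH$, use the fact that two dimer configurations with the same boundary condition have a symmetric difference consisting of cycles, and invoke the injectivity of $\omega\mapsto D_\omega$ under a fixed admissible boundary condition to close the converse. The only loose point is your claim in the converse that flipping one of the two competing edges at $u$ ``necessarily flips two bisectors at its far endpoint'': this fails if all three hexagonal edges at that far endpoint flip (the global-complementation scenario, under which no bisector changes), so the propagation must terminate either in an off-cycle bisector flip or at the fixed boundary; since you do invoke the boundary condition at the end, this is a minor imprecision rather than a gap, and the paper's own proof is equally terse on this point.
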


\begin{proof}Let $\HH_{\Delta,\Lambda}$ be the subgraph of $\HH_{\Delta}$ corresponding to $\Lambda$. Since both $\omega_1$ and $\omega_2$ have the same boundary condition $b_{\Lambda}$, both $D_1$ and $D_2$ have the same boundary condition $\tilde{b}_{\Lambda}$. The double dimer configuration $D_1\cup D_2$ in $\HH_{\Delta,\Lambda}$ consists of doubled edges and cycles, where the cycles do not intersect the boundary.

First let us consider the case when $\omega_1$ and $\omega_2$ differ at exactly one edge $e=(u,v)$ as in Case (2), where $u,v\in V$ are two vertices of $\HH$. 
For any vertex $w\in V\setminus\{u,v\}$, each one of the three incident edges of $w$ in $\HH$ has the same state in $\omega_1$ and $\omega_2$. Therefore each one of the three incident edges of $w$ in $\HH_{\Delta}$ has the same state in $D_1$ and $D_2$, by the correspondence between $(\omega_1,\omega_2)$ and $(D_1,D_2)$ described before. 
 Then in $D_1\cup D_2$, any vertex $w\in V\setminus \{u,v\}$ is incident to a doubled edge. 
 
 However, at the vertex $u$ or $v$, its incident edges in $\HH_{\Delta}$ have different states in $D_1$ and $D_2$ since its incident edges in $\HH$ have different states in $\omega_1$ and $\omega_2$. Therefore both $u$ and $v$ are along a cycle in $D_1\cup D_2$. See Figure \ref{fig:p2d}. There must be a unique Type-I cycle passing through both $u$ and $v$ since no other vertices of $\HH$ are along a cycle in $D_1\cup D_2$.
 
 \begin{figure}[htbp]
\centerline{\includegraphics*{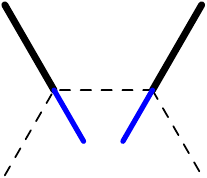}\qquad\qquad\includegraphics*{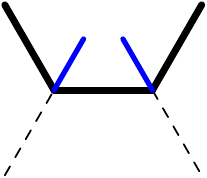}\qquad\qquad\includegraphics*{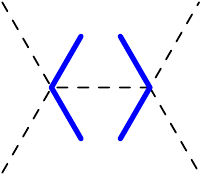}}
   \caption{Two 1-2 model configurations $\omega_1$ and $\omega_2$ differ at one edge. In the left (\resp. middle) graph, black solid lines represent $\omega_1$ (resp. $\omega_2$); blue solid lines represent $D_1$ (resp. $D_2$). In the right graph, blue solid lines represent $D_1\cup D_2$.}
   \label{fig:p2d}
\end{figure}
 
 Conversely, if $D_1\cup D_2$ consists of doubled edges and a unique Type-I cycle enclosing an edge $e=(u,v)$, then at any vertex $w\in V\setminus\{u,v\}$, the incident edges of $w$ in $\HH$ have the same state in $\omega_1$ and $\omega_2$ since the incident edges of $w$ in $\HH_{\Delta}$ have the same state in $D_1$ and $D_2$ and moreover, both $\omega_1$ and $\omega_2$ have the same boundary condition $b_{\Lambda}$.  Therefore each edge in $E\setminus\{e\}$ has the same state in $\omega_1$ and $\omega_2$, since it is incident to at least one vertex in $V\setminus \{u,v\}$. However, $\omega_1$ and $\omega_2$ cannot have the same state at the edge $e$, since if so, $D_1$ and $D_2$ would be identical. This completes the proof of Part I. of the lemma.
 
 Now we prove Part II. Assume that $f$ is a face of $\HH$ consisting of vertices $v_1,\ldots,v_6$ and edges $e_1=(v_6,v_1), e_1=(v_1,v_2),\ldots, e_6=(v_5,v_6)$. 
 
 Assume that $\omega_1$ and $\omega_2$ differ at exactly the face $f$ as described in (3). By the same arguments as in the proof of I., $D_1\cup D_2$ has doubled edges incident to every vertex in $w\in V\setminus \{v_1, v_2,\ldots,v_6\}$, and each one of $v_1,\ldots,v_6$ is along a cycle in $D_1\cup D_2$. Moreover, any cycle in $D_1\cup D_2$ cannot intersect the interior of $F$. That is because for $1\leq i\leq 5$, $e_i$ and $e_{i+1}$ have different states in $\omega_1$ (resp.\ $\omega_2$); hence the bisector edge of the angle with sides $e_i$ and $e_{i+1}$ is not present in $D_1$ (resp.\ $D_2$). Similarly, the bisector edge of the angle with sides $e_1$ and $e_6$ is present in neither $D_1$ nor $D_2$. 
 See Figure \ref{fig:pd3}.
 \begin{figure}[htbp]
\centerline{\includegraphics*{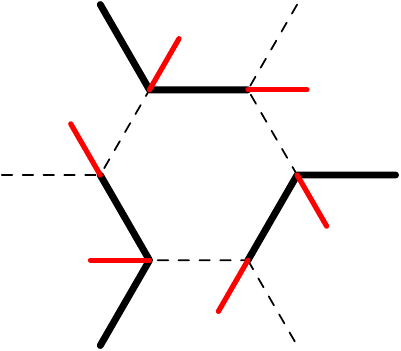}\qquad\qquad\includegraphics*{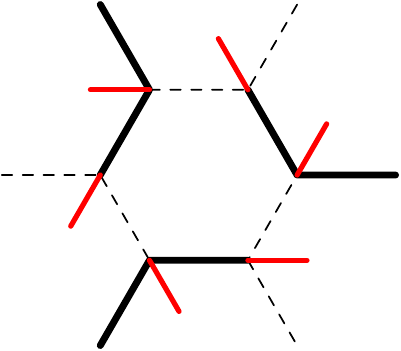}\qquad\qquad\includegraphics*{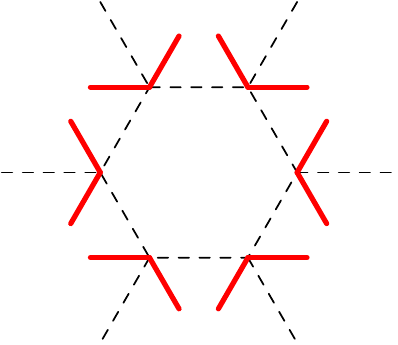}}
   \caption{Two 1-2 model configurations $\omega_1$, $\omega_2$ differ at one face. In the left (\resp. middle) graph, black solid lines represent $\omega_1$ (resp. $\omega_2$); red solid lines represent $D_1$ (resp. $D_2$). In the right graph, red solid lines represent $D_1\cup D_2$.}
   \label{fig:pd3}
\end{figure}
Therefore $D_1\cup D_2$ consists of doubled edges and a unique Type-II cycle enclosing the face $f$.

Conversely, assume that $D_1\cup D_2$ consists of doubled edges and a unique Type-II cycle enclosing the face $f$. By the same arguments as in the proof of Part I., all the edges in $E\setminus \{e_1,e_2,\ldots,e_6\}$ have the same state in $\omega_1$ and $\omega_2$. Since the unique Type-II cycle in $D_1\cup D_2$ encloses the face $F$, the cycle does not intersect the interior of $f$. This implies for $1\leq i\leq 5$, $e_i$ and $e_{i+1}$ have different states in $\omega_1$ (resp.\ $\omega_2$). Also $e_1$ and $e_6$ have different states in $\omega_1$ (resp.\ $\omega_2$). Moreover, at each one of the vertices $v_1,v_2,\ldots,v_6$, the two incident present edges in $D_1\cup D_2$ are the two incident edges of the vertex not in $f$. Since each edge of $\HH$ outside $f$ has the same state in $\omega_1$ and $\omega_2$, we deduce that $\omega_1$ has $e_1,e_3,e_5$ present, $e_2,e_4,e_6$ absent; while $\omega_2$ has $e_2,e_4,e_6$ present, $e_1,e_3,e_5$ absent, or vice versa. This completes the proof of II.
\end{proof}

Let $C$ be an even-length cycle of $\HH_{\Delta}$. We label the edges of $C$ cyclically by $e_1,e_2,\ldots,e_{2n}$. Let $D$ be a dimer configuration on $\HH_{\Delta}$, such that $e_1,e_3,\ldots,e_{2n-1}$ are present in $D$, while $e_2,e_4,\ldots,e_{2n}$ are absent. By \textbf{rotating configurations of $D$ along $C$ to alternate edges}, we obtain a new dimer configuration $D'$ on $\HH_{\Delta}$, such that $D'=D$ for any edge $e\in E_{\Delta}\setminus\{e_1,\ldots,e_{2n}\}$; while in $D'$, $e_2,e_4,\ldots,e_{2n}$ are present and $e_1,e_3,\ldots, e_{2n-1}$ are absent. See Figure \ref{fig:rot} for an example of rotating dimer configurations along a Type-I cycle to alternate edges.
 \begin{figure}[htbp]
\centerline{\includegraphics*{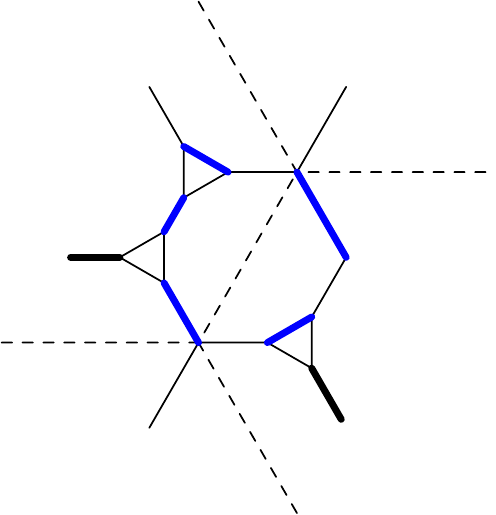}\qquad\qquad\includegraphics*{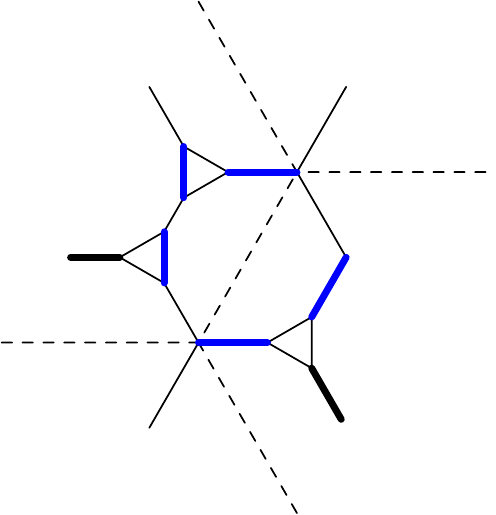}}
   \caption{Two dimer configurations $D$ and $D'$. In the left graph, thick lines represent $D$. In the right graph, thick lines represent $D'$. $D'$ is obtained from $D$ by rotating configurations along a Type-I cycle to alternate edges. Thick black lines represent configurations that are the same in both $D$ and $D'$, and thick blue lines represent configurations that are different in $D$ and $D'$.}
   \label{fig:rot}
\end{figure}
By Lemma \ref{sm}, rotating the dimer configuration along a Type-I (resp.\ Type-II) cycle corresponds to changing 1-2 model configurations from $\omega_1$ to $\omega_2$ such that $\omega_1$ and $\omega_2$ satisfy Condition (2) (resp.\ Condition (3)).

\bigskip
\noindent\textbf{Proof of Proposition \ref{ir}.} Let $\omega_1,\omega_2\in \Omega_{\Lambda,b_{\Lambda}}$ be two arbitrary 1-2 model configurations on $\HH_{\Lambda}$ with boundary condition $b_{\Lambda}$. To show that the Markov chain with transition matrix $P$ defined by (\ref{tmt}) is irreducible, it suffices to show that $\omega_2$ can be obtained from $\omega_1$ by finitely many manipulations of configurations on single edges or single faces as in (2) or (3). By Lemma \ref{sm}, it suffices to show that for any two dimer configurations $D_1$, $D_2$ of $\HH_{\Delta,\Lambda}$ with boundary condition $\tilde{b}_{\Lambda}$, $D_2$ can be obtained from $D_1$ by moving present edges to alternate edges along finitely many Type-I and Type-II cycles.

Note that $D_1\cup D_2$ is a disjoint union of doubled edges and even-length cycles, such that the cycles are all in $\HH_{\Delta,\Lambda}$, and never cross the boundary of $\HH_{\Delta, \Lambda}$. The symmetric difference $D_1\Delta D_2$ is a disjoint union of even length cycles.

We say a cycle $C$ in $D_1\Delta D_2$ is \df{contractible} if $C$ does not enclose any other cycle in $D_1\Delta D_2$. More precisely, if we identify the cycle $C$ with its embedding into the plane $\RR^2$, the complement of $C$ in $\RR^2$ - denoted by $\RR^2\setminus C$, has exactly one bounded component and one unbounded component. The cycle $C$ is contractible if and only if the bounded component of $\RR^2\setminus C$ does not include any cycle in $D_1\Delta D_2$.
We will eliminate contractible cycles in $D_1\Delta D_2$ one by one by changing configurations of $D_1$ or $D_2$ to alternate edges on finitely many Type-I or Type-II cycles.  

Let $\mathcal{C}(D_1\Delta D_2)$ be the set of all cycles in $D_1\Delta D_2$. For each cycle $C\in\mathcal{C}(D_1\Delta D_2)$, let $\mathcal{N}(C)$ be the number of edges of $\HH$ enclosed by $C$. More precisely, if we identify $C$ and edges of $\HH$ with its embedding into $\RR^2$, where edges of $\HH$ are closed line segments, $\mathcal{N}(C)$ is the number of edges of $\HH$ whose interior lie in the bounded component of $\RR^2\setminus C$. For example, if $C$ is a Type-I cycle (resp.\ Type-II) cycle, then $\mathcal{N}(C)=1$ (resp. $\mathcal{N}(C)=6$). For any two dimer configurations $D_1,D_2$ in $\HH_{\Delta,\Lambda}$ with boundary condition $\tilde{b}_{\Lambda}$, define the \textbf{distance} $d(D_1,D_2)$ by
\begin{eqnarray}
d(D_1,D_2)=\sum_{C\in\mathcal{C}(D_1\Delta D_2)}\mathcal{N}(C).\label{d12}
\end{eqnarray}
Obviously $D_1=D_2$ if and only if $d(D_1,D_2)=0$.

Let $C$ be a contractible cycle of $D_1\Delta D_2$. The complement $\RR^2\setminus C$ has exactly one bounded component, denoted by $S$. Let $v$ be a vertex of $\HH$ along $C$. The following cases might occur
\begin{numlist}
\item At $v$, $S$ forms an angle of size $\frac{2\pi}{3}$.
\item At $v$, $S$ forms an angle of size $\frac{4\pi}{3}$.
\end{numlist}
In  Lemmas \ref{l33} and \ref{l34} below, we will show that in either case, there is always a way to rotate configurations of $D_1$ or $D_2$ to alternate edges along finitely many Type-I and/or Type II cycles, such that the distance of the two new dimer configurations after the configuration change strictly decreases, compared to $d(D_1,D_2)$. Since $d(D_1,D_2)$ is always finite and integer-valued, after finitely many steps of configuration changes, $D_1$ becomes $D_1^{(i)}$ and $D_2$ becomes $D_2^{(j)}$, and we will obtain $D_1^{(i)}=D_2^{(j)}$. This completes the proof. \hfill$\Box$

\begin{lemma}\label{l33} Let $C$ be a contractible cycle in $D_1\Delta D_2$.
If there exists a vertex $v$ along $C$ satisfying Condition 1., then we can change configurations of $D_1$ or $D_2$ to alternate edges on finitely many Type-I cycles, such that the distance of the two new dimer configurations after the configuration change strictly decreases, compared to $d(D_1,D_2)$.
\end{lemma}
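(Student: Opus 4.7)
The plan is to localize the argument near $v$ and use the Type-I cycle enclosing the $\HH$-edge that sticks into the bounded region $S$ at $v$. Since $S$ has angle $2\pi/3$ at $v$, exactly one $\HH$-edge $f$ incident to $v$ has its interior in $S$, and the two bisectors $b, b'$ that $C$ uses at $v$ are precisely the two bisectors of $\HH_\Delta$ adjacent to $f$ at $v$. Because $b, b' \in D_1\Delta D_2$ and at $v$ each $D_i$ contains exactly one bisector, I may assume $b\in D_1$ and $b'\in D_2$. Write $u$ for the other endpoint of $f$ and let $T$ denote the Type-I cycle enclosing $f$ which uses $b, b'$ at $v$ together with the two bisectors at $u$ flanking $f$.

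The objective is to rotate $D_1$ or $D_2$ along $T$, after at most one preparatory Type-I rotation. By Lemma \ref{sm}, rotating $D_i$ along $T$ is a legal dimer operation iff the corresponding $\omega_i$ admits the flip of $f$ as in Case~(2), equivalently iff $\deg_{\omega_i}(v) = \deg_{\omega_i}(u)$. The hypothesis at $v$ pins down the local configurations $\omega_1|_v$ and $\omega_2|_v$ explicitly in terms of which of $b, b'$ lies in $D_1$, so $\deg_{\omega_1}(v)$ and $\deg_{\omega_2}(v)$ are known. A short case analysis on $\omega_1|_u$ and $\omega_2|_u$ yields the desired $i\in\{1,2\}$ in every case except a degenerate one in which $u$ has degree $1$ with $f$ as its unique present incident edge in \emph{both} $\omega_1$ and $\omega_2$; a direct bisector check at $u$ shows that in this degenerate case $u$ lies strictly inside $S$ (off $C$). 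One then selects a second $\HH$-edge $g$ at $u$ (flanking $f$) whose flip is legal in $\omega_1$ or $\omega_2$, and rotates along the Type-I cycle enclosing $g$; by construction this cycle is disjoint from $C$ near $v$, so $C$ is preserved, while $\deg_{\omega_j}(u)$ is adjusted so that the main rotation along $T$ becomes legal.

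Once the valid rotation along $T$ is performed, set $D_1' := D_1\Delta T$. Contractibility of $C$ together with $T\subset \overline{S}$ and $\partial S\cap T = \{b, b'\}\subseteq C$ guarantee that $T$ meets no cycle of $D_1\Delta D_2$ other than $C$. Consequently $D_1'\Delta D_2 = (D_1\Delta D_2)\Delta T$ replaces $C$ by the single new cycle $C' := C\Delta T$ bounding the region $S\setminus S_T$, where $S_T$ is the region enclosed by $T$. Since $S_T$ contains exactly one $\HH$-edge, namely $f$, we obtain $\mathcal{N}(C') = \mathcal{N}(C) - 1$ while every other cycle of $D_1\Delta D_2$ is untouched. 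Accounting for the optional preparatory rotation (which is chosen so that its effect on $d$ is either neutral or already contributes a strict decrease on its own), the total distance strictly decreases: $d(D_1', D_2) \leq d(D_1, D_2) - 1$.

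The main technical obstacle is the degenerate branch of the case analysis: showing that even when both $\omega_1$ and $\omega_2$ forbid a direct flip of $f$, one can always find a suitable auxiliary $\HH$-edge at $u$ whose Type-I cycle is alternating with respect to $D_1$ or $D_2$ and whose rotation brings the degree configuration at $u$ into alignment with that at $v$ without enlarging any other cycle of $D_1\Delta D_2$. The resolution should exploit that in the degenerate case the local configurations of $\omega_1$ and $\omega_2$ at $u$ coincide while those at $v$ differ, which forces $v$ and $u$ to sit on different sides of $C$ topologically and supplies the flexibility to flip a distinct edge at $u$ legally in exactly one of $\omega_1, \omega_2$.
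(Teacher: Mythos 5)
Your first branch is sound and matches the paper's Case (i): when the Type-I cycle enclosing the edge $e_1=(v,w)$ sticking into $S$ is alternating in $D_1$ or $D_2$ (equivalently, the present bisector at $w$ in that configuration is adjacent to $e_1$ --- at $v$ this is automatic from Condition 1), a single rotation reduces $d$ by $1$. Two caveats there: your stated criterion ``legal iff $\deg_{\omega_i}(v)=\deg_{\omega_i}(u)$'' is not quite the right equivalence (one also needs the common degree to be compatible with the state of $f$: degree $2$ if $f$ is present, degree $1$ if $f$ is absent), and your description of the degenerate case omits the sub-case where $u$ has degree $2$ with $f$ absent and the other two edges present; in both sub-cases what actually characterizes degeneracy is that the present bisector at $u$ is the one \emph{opposite} $f$ in both $D_1$ and $D_2$.

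The genuine gap is in the degenerate branch. You assume that a \emph{single} preparatory Type-I rotation at an auxiliary edge $g$ incident to $u$ suffices, and that such a $g$ with a legal flip always exists. Neither is justified, and the first is false in general: the flip of $g=(u,u')$ is legal in $D_1$ or $D_2$ only if the present bisector at $u'$ is adjacent to $g$ in one of them, and $u'$ may be in exactly the same degenerate situation as $u$, forcing yet another preparatory move, and so on. This is precisely why the paper runs an exploration process producing a chain of distinct edges $e_1,e_2,\ldots,e_k$ enclosed by $C$, each with its Type-I cycle, terminating only because $C$ encloses finitely many edges (the chain must eventually reach an edge where the alternation condition holds, or exit the region bounded by $C$, which is impossible for a vertex strictly inside $S$ off every cycle of $D_1\Delta D_2$); the distance is then decreased by performing the $k$ rotations in reverse order $C_k',\ldots,C_1'$. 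You explicitly flag this as ``the main technical obstacle'' and offer only a heuristic for why it ``should'' work, so the core of the argument --- the termination and correctness of the unbounded preparatory cascade --- is missing from your proof.
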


\begin{proof}

Assume that $v$ is a vertex of $\HH$ along $C$ satisfying Condition 1. Let $e_1$ be the edge of $\HH$ incident to $v$ inside $S$. Let $w$ be the other endpoint of $e_1$. Let $C_1$ be the Type-I cycle enclosing $e_1$. The following cases might occur.
\begin{romlist}
\item In at least one of $D_1$ and $D_2$, both incident present edges of $v$ and $w$ are along $C_1$.
\item In neither $D_1$ nor $D_2$, both incident present edges of $v$ and $w$ are along $C_1$.
\end{romlist}

Note that under Condition 1., the present edges incident to $v$ in $D_1$ and $D_2$ are exactly the two incident edges of $v$ along $C_1$.

In Case (i), without loss of generality, assume that in $D_1$, both incident present edges of $v$ and $w$ are along $C_1$. Then we change the configurations of $D_1$ to alternate edges along some Type-I cycle $C_1'$ enclosing $e_1$, and obtain a configuration $D_1^{(1)}$. Here, $C_1'$ and $C_1$ are both Type-I cycles enclosing $e_1$, they may differ by a few triangles.
 Note that
\begin{eqnarray*}
d(D_1^{(1)},D_2)=d(D_1,D_2)-1,
\end{eqnarray*}
 since $e_1$ is enclosed in one more cycles in $D_1\Delta D_2$ than $D_1^{(1)}\Delta D_2$. 

In Case (ii), at $w$ both $D_1$ and $D_2$ occupy the same incident edge in $\HH_{\Delta}$. Let $e_1,e_2,e_3$ be the three incident edges of $w$ in $\HH$. Let $C_2$ and $C_3$ be two Type-I cycles enclosing $e_2$ and $e_3$, respectively. Let $w,v_2$ (resp. $w, v_3$) be the two endpoints of $e_2$ (resp. $e_3$). Again Case (i) or Case (ii) might occur with $C_1$ replaced by $C_2$. 

In Case (ii), if Case (i) occurs when $C_1$ is replaced by $C_2$, without loss of generality, assume that in $D_2$, both incident edges of $v_2$ and $w$ are along $C_2$. Then we change configurations of $D_2$ to alternate edges along some Type-I cycle $C_2'$ enclosing $e_2$ and obtain a configuration $D_2^{(1)}$. Then we change configurations of $D_2^{(1)}$ to alternate edges along some Type-I cycle $C_1''$ enclosing $e_1$, and obtain a configuration $D_2^{(2)}$. We have
\begin{eqnarray*}
d(D_1,D_2^{(2)})=d(D_1,D_2)-2,
\end{eqnarray*}
since $e_1$ and $e_2$ are enclosed in one more cycle in $D_1\Delta D_2$ than $D_1\Delta D_2^{(2)}$.

In Case (ii), if Case (ii) occurs when $C_1$ is replaced by $C_2$, then at the vertex $v_2$, both $D_1$ and $D_2$ has the same incident present edge. Indeed, this implies that $e_1$, all the incident edges of $w$ and $v_2$ are enclosed by the original cycle $C$. We continue to explore the incident edges of $v_2$ other than $e_2$ until we find and edge $e_k$ of $\HH$, and a Type-I cycle $C_k$ surrounding $e_k$, such that Case (i) occurs when $C_1$ is replaced by $C_k$. This is always possible since $C$ is a finite cycle, we will finally find edges not enclosed by $C$ by the exploration process described above. Then we change configurations of $D_1$ or $D_2$ on finitely many Type-I cycles $C_k', C_{k-1}',\ldots, C_1'$, such that after the configuration change, $D_1$ becomes $D_1^{(i)}$, $D_2$ becomes $D_2^{(j)}$ and moreover,
\begin{eqnarray*}
 d(D_1^{(i)}\Delta D_2^{(j)})<d(D_1,D_2). 
 \end{eqnarray*}
 Here for $1\leq s\leq k$, $C_s'$ is a Type-I cycle enclosing the same edge of $\HH$ as $C_s$.
 \end{proof}
 
 \begin{lemma}\label{l34}Let $C$ be a contractible cycle in $D_1\Delta D_2$.
If every vertex along $C$ satisfies Condition 2., then we can change configurations of $D_1$ or $D_2$ to alternate edges on finitely many Type-I and/or Type-II cycles, such that the distance of two new dimer configurations after the configuration change strictly decreases, compared to $d(D_1,D_2)$.
\end{lemma}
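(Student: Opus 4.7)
I would adapt the exploration/replacement argument of Lemma \ref{l33}, but with Type-II cycles as the principal tool (possibly combined with Type-I cycles) in order to exploit the uniformly reflex geometry imposed by Condition 2 along $C$.

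First, I would extract the following geometric information from Condition 2: at every vertex $v \in V$ along $C$, the $\frac{4\pi}{3}$-sector at $v$ lies inside $S$, so two of the three hexagonal faces of $\HH$ meeting at $v$ have their local sectors contained in $S$. Combined with contractibility of $C$, this forces $\ol{S}$ to contain at least one face $f$ of $\HH$ such that some vertices of $f$ lie on $C$ and the Type-II cycle $C_f$ in $\HH_{\Delta}$ enclosing $f$ shares a non-trivial arc with $C$. I would select such an $f$ by starting from any vertex $v$ of $C$, picking one of the two faces at $v$ that lies locally in $S$, and walking along $C$ if necessary to arrive at a face whose Type-II cycle overlaps $C$ along a contiguous arc.

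Next, I would analyze whether the Type-II rotation along $C_f$ is admissible for $D_1$ or $D_2$, i.e.\ whether one of the two configurations exhibits the alternating present/absent pattern along the edges of $C_f$. Since $C \subset D_1 \Delta D_2$ and $C$ overlaps $C_f$ on the selected shared arc, the matchings of $D_1$ and $D_2$ differ on this arc, and I would argue this asymmetry forces admissibility on one side. If not immediately admissible, I would first perform a short sequence of Type-I rotations on Type-I cycles enclosing edges of $\HH$ incident to $f$ from inside $S$ --- a localized exploration in the spirit of Case (ii) of Lemma \ref{l33} --- to correct the parity, after which the Type-II rotation on $C_f$ becomes admissible.

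Having performed the admissible Type-II rotation (say $D_1 \to D_1'$), I would track the change in the double-dimer configuration via
\begin{eqnarray*}
D_1' \Delta D_2 = (D_1 \Delta D_2) \Delta C_f.
\end{eqnarray*}
Because $C_f$ shares at least one edge with $C$, this XOR either shrinks $C$ or splits it into smaller cycles, and I would verify that the resulting total enclosure count satisfies $d(D_1', D_2) < d(D_1, D_2)$. The principal obstacle is precisely this last verification: when the rotation splits $C$ into several pieces, one must control the $\mathcal{N}$-counts of the new cycles so that their sum is strictly less than $\mathcal{N}(C)$. This is where the \emph{uniform} Condition 2 along $C$ enters critically: by precluding any mixed local geometry to which Lemma \ref{l33} would already apply, it restricts the possible local structure of $C$ near $f$ enough to rule out pathological splittings that would preserve the total enclosure count.
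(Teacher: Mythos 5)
Your proposal centers the argument on finding a single face $f$ whose Type-II cycle $C_f$ overlaps $C$ and rotating along it, but this misses the actual mechanism and leaves the hardest step unproved. The admissibility claim is the first genuine gap: the fact that $D_1$ and $D_2$ differ along an arc of $C$ shared with $C_f$ does \emph{not} force either configuration to alternate along all of $C_f$. Alternation along a Type-II cycle is a condition on the present edges at all six vertices of $f$, most of which can lie in the interior of $S$ where $D_1=D_2$ and where the present edge of $D_1$ may point away from $f$ entirely. Your fallback --- ``a short sequence of Type-I rotations to correct the parity'' --- is precisely the content of the lemma, and you give no construction for it nor any reason it terminates or does not increase $d$. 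The paper's proof supplies exactly this missing construction: starting from an edge $e_1$ of $\HH$ inside $S$ incident to a vertex of $C$, it follows the present edges of $D_1$ to build a chain of distinct edges $e_1,\dots,e_k$ enclosed by $C$, each with a Type-I cycle $C_i$, such that rotating along $C_k',C_{k-1}',\dots,C_1'$ \emph{in that order} is a cascade of valid Type-I rotations. Either this chain terminates (pure Type-I resolution, $d$ drops by $k$ --- note that in many instances no Type-II rotation is used at all, contrary to your framing of Type-II as the principal tool), or the chain closes up; contractibility of $C$ then forces the closed chain to be a single hexagonal face, and only then is a Type-II rotation performed.

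The second gap is in your accounting of the decrease of $d$. You compute $D_1'\Delta D_2=(D_1\Delta D_2)\Delta C_f$ and argue the rotation ``shrinks $C$ or splits it into smaller cycles,'' then worry about controlling the $\mathcal{N}$-counts of the pieces. But the relevant Type-I and Type-II cycles in the paper's argument enclose edges and faces of $\HH$ strictly inside $S$ and need not share any edge with $C$; the decrease in $d$ is not obtained by splitting $C$ but by counting that the edges $e_1,\dots,e_k$ (or the six edges of the face) are enclosed by one fewer cycle of the symmetric difference after the rotation, directly from the definition $d(D_1,D_2)=\sum_{C'}\mathcal{N}(C')$. Your final appeal to Condition 2 ``ruling out pathological splittings'' is therefore aimed at the wrong quantity, and the verification you flag as the principal obstacle is never actually carried out. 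As written, the proposal does not constitute a proof.
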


\begin{proof}

Now we assume that every vertex of $\HH$ along $C$ satisfies Condition 2. Again let $v$ be a vertex of $S$ along $C$. Let $e_1=\langle v,w \rangle$ be an edge of $\HH$ in $S$. Let $C_1$ be a Type-I cycle enclosing $e_1$. Assume that the incident present edge of $v$ in $D_1$ is along $C_1$. The following cases might occur.
\begin{Alist}
\item If the incident present edge at $w$ in $D_1$ is also along $C_1$, we can change configurations of $D_1$ along a Type-I cycle $C_1'$ enclosing $e_1$ to alternate edges, and obtain a new configuration $D_1^{(1)}$, such that
\begin{eqnarray*}
d(D_1^{(1)},D_2)=d(D_1,D_2)-1,
\end{eqnarray*}
since $e_1$ is enclosed in one more cycles in $D_1\Delta D_2$ than $D_1^{(1)}\Delta D_2$. 
\item If the incident present edge at $w$ in $D_1$ is not along $C_1$, let $p_1$ be the incident present edge of $w$ in $D_1$. Let $p_1$ be the incident present edge of $w$ in $D_1$. Let $e_2=(w,v_2)$ be an incident edge of $w$ in $\HH$ enclosed by $C$. Note that we can always find such an $e_2$ that $e_2\neq e_1$ because of the reasons listed below.
\begin{problist}
\item If $p_1$ is also present in $D_2$, then all the three incident edges of $w$ in $\HH$ are enclosed by $C$.
\item If $p_1$ is not present in $D_2$, then $p_1\in C$ and $w\in C$, since $C$ is a contractible cycle. Recall that $S$ forms an angle of size $\frac{4\pi}{3}$ at $w$, therefore two incident edges of $w$ are in S and enclosed by $C$.
\end{problist}
Let $C_2$ be a Type-I cycle enclosing $e_2$. Obviously we have $p_1\in C_2$. 
\begin{Romlist}
\item If Case A. occurs when $(w,C_1)$ is replaced by $(v_2,C_2)$, then we rotate configurations in $D_1$ to alternate edges along Type-I cycles $C_1',C_2'$ (here $C_1',C_2'$ are Type-I cycles enclosing $e_1$, $e_2$, respectively), and obtain a new configuration $D_1^{(1)}$. We have
\begin{eqnarray*}
d(D_1^{(1)},D_2)=d(D_1,D_2)-2,
\end{eqnarray*}
because the edges $e_1,e_2$ are enclosed in one more cycle in $D_1\Delta D_2$ compared to $D_1^{(1)}\Delta D_2$.
\item If Case A. does not occur when $(w,C_1)$ is replaced by $(v_2,C_2)$, we repeat the similar process. In general, we have an induction process. We make the following induction hypothesis.
\begin{itemize}
\item Assume that for $k\geq 2$, we find a sequence of distinct edges $e_1=(v_0,v_1), e_1=(v_1,v_2),\ldots, e_k=(v_{k-1},v_k)$ enclosed by C. Let $C_i$, $1\leq i\leq k$, be a Type-I cycle enclosing $e_i$. Assume that for $1\leq i\leq k-1$, the incident present edge of $v_i$ in $D_1$ is along $C_{i+1}$ but not along $C_i$.
\end{itemize}
\begin{romlist}
\item If the incident present edge of $v_k$ in $D_1$ is along $C_k$, then we rotate configurations of $D_1$ along a sequence of Type-I cycles $C_k',C_{k-1}',\ldots, C_1'$ in order (here for $1\leq i\leq k$, $C_i'$ is a Type-I cycle  enclosing $C_1$), and obtain a new configuration $D_1^{(1)}$, such that
\begin{eqnarray*}
d(D_1^{(1)},D_2)=d(D_1,D_2)-k,
\end{eqnarray*}
because $e_1,\ldots,e_k$ are enclosed in one more cycle in $D_1\Delta D_2$ then $D_1^{(1)}\Delta D_2$.
\item If the incident present edge of $v_k$ in $D_1$ is not along $C_k$, then there is another edge $e_{k+1}=(v_k,v_{k+1})\neq e_k$, such that $e_{k+1}$ is enclosed by $C$. Indeed, let $p_k$ be the incident present edge of $v_k$ in $D_1$.
\begin{problist}
\item If $p_k$ is also present in $D_2$, then all the three incident edges of $v_k$ in $\HH$ are enclosed by $C$. In this case, we choose $e_{k+1}$, such that $e_{k-1},e_k,e_{k+1}$ are along the same face of $\HH$. 
\item If $p_k$ is not present in $D_2$, since $C$ is a contractible cycle, we have $v_k\in C$; because otherwise $C$ will enclose another cycle in $D_1\Delta D_2$ passing through $v_k$. Recall that $S$ forms an angle of size $\frac{4\pi}{3}$ at $v_k$, two incident edges of $v_k$ are in $S$ and enclosed by $C$.
\end{problist}
Since $C$ encloses finitely many edges in total, after finitely many steps, we will have 
\begin{eqnarray}
e_r=e_s,\ \mathrm{for}\ r>s, \label{rgs}
\end{eqnarray}
where $r$ is the least integer such that (\ref{rgs}) occurs. Moreover, no edges of $e_1,\ldots, e_r$ are boundary edges of $\HH_{\Lambda}$, because $C$ does not cross the boundary. The edges $e_s,e_{s+1},\ldots,e_{r}$ form a cycle in $\HH$. If the cycle is not a face of $\HH$, by the induction process described above, we can find a cycle $C'$ in $D_1\Delta D_2$ enclosed by the cycle $e_s,e_{s+1},\ldots,e_{r}$, and therefore $C'$ is also enclosed by $C$. But this is not possible since $C$ is a contractible cycle in $D_1\Delta D_2$. Moreover, the induction process above implies that we must have $s=1$. Therefore $e_1,\ldots,e_6$ are edges of a face $f_1$ in $\HH$. Then we can rotate configurations of $D_1$ to alternate edges along a Type-II cycle enclosing $f_1$, and obtain a new configurations $D_1^{(1)}$ satisfying
\begin{eqnarray*}
d(D_1^{(1)},D_2)=d(D_1,D_2)-6,
\end{eqnarray*}
becuase $e_1,\ldots,e_6$ are enclosed in one more cycle in $D_1\Delta D_2$ than $D_1^{(1)}\Delta D_2$. This completes the proof.
 \end{romlist}
\end{Romlist}
\end{Alist}
\end{proof}

\noindent\textbf{Remark.} A Markov chain with transition matrix defined by (\ref{tmt}) is not necessarily irreducible if its state space is the set of all 1-2 model configurations on a finite subgraph of a hexagonal lattice embedded in a surface other than the Euclidean plane $\RR^2$, with certain boundary conditions. Here is an example. Let $\HH_{C_{n,\infty}}$ be the hexagonal lattice embedded into an infinite cylinder with circumference $n$ and infinite height. Let $\HH_{C_{n,1}}$ be its finite subgraph embedded into a cylinder with circumference $n$ and height $1$. See Figure \ref{fig:cl3}. Consider the boundary condition such that all the edges crossed by $\gamma_1$ are present, and all the edges crossed by $\gamma_2$ are absent. Then the two 1-2 model configurations in Figure \ref{fig:cl3} cannot be obtained by each other by movements described by (2) and (3).

\begin{figure}[htbp]
\includegraphics[width=0.4\textwidth]{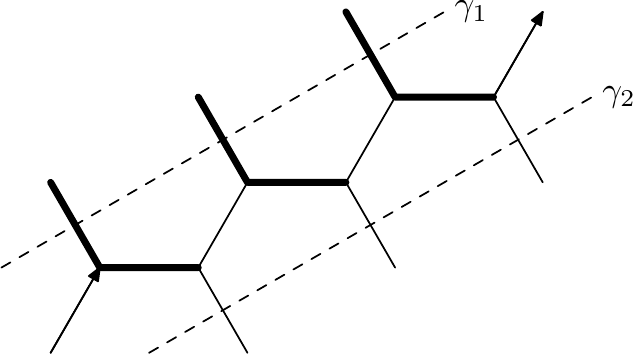}\qquad\qquad\includegraphics[width=0.4\textwidth]{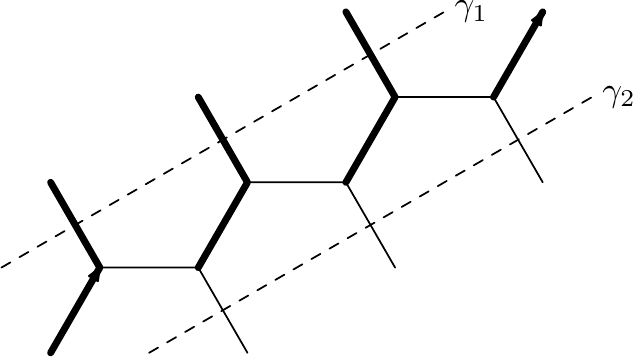}
\caption{1-2 model configurations on cylinder graph. The edges with arrows are identified. Edges crossed by dashed lines $\gamma_1$ and $\gamma_2$ are boundary edges. Thick lines represent present edges in configurations. The configurations on the two graphs cannot be obtained from each other by movements in (2) and (3).}
\label{fig:cl3}
\end{figure}

\begin{proposition}\label{re}Assume the local weights of the 1-2 model are given by $a,b,c>0$; see Figure \ref{fig:sign}. The Markov chain defined by (\ref{tmt}) is a reversible chain, whose stationary distribution is the probability measure on $\Omega_{\Lambda,b_{\Lambda}}$ for which the probability of a configuration is proportional to the product of local weights at each vertex. If $\Lambda=\Lambda_{m,n}$, then the distribution satisfying
\begin{eqnarray}
\pi(\omega)\propto\prod_{v\in \Lambda}w(\omega|_v)\label{gm},\ \omega\in \Omega_{\Lambda,b_{\Lambda}}
\end{eqnarray}
 on $\Omega_{\Lambda,b_{\Lambda}}$ is the unique stationary distribution.
\end{proposition}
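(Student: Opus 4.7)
The plan is to establish reversibility by directly verifying the detailed balance equations $\pi(\omega_1)P(\omega_1,\omega_2)=\pi(\omega_2)P(\omega_2,\omega_1)$ for all $\omega_1,\omega_2\in\Omega_{\Lambda,b_\Lambda}$, where $\pi$ is the measure given by (\ref{gm}); once this is done, $\pi$ is automatically stationary, and for $\Lambda=\Lambda_{m,n}$ uniqueness follows from the irreducibility established in Proposition \ref{ir} together with the standard fact that a finite irreducible Markov chain has a unique stationary distribution. The equation is trivial when $\omega_1=\omega_2$, and when $\omega_1\neq\omega_2$ satisfies neither Condition (2) nor (3) of Section \ref{mch} both sides vanish, since those conditions are symmetric in the pair.

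For Case (3) the transition is symmetric, $P(\omega_1,\omega_2)=P(\omega_2,\omega_1)=C_\Lambda$, so detailed balance reduces to $w(\omega_1)=w(\omega_2)$. Since the two configurations agree off the six vertices $v_1,\ldots,v_6$ of the flipped face $f$, and the states of the external legs $g_1,\ldots,g_6$ are the same in both, it suffices to check that the product $\prod_{i=1}^6 w(\omega|_{v_i})$ is unchanged under the flip. This follows by direct inspection from Figure \ref{fig:sign}: the multiset of local weights at $v_1,\ldots,v_6$ (three black, three white) in $\omega_1$ coincides with that in $\omega_2$. This face-flip invariance is exactly the property underlying the correspondence with dimers on $\HH_\Delta$ exploited in Lemma \ref{sm}.

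For Case (2) the configurations differ only at the endpoints $u,v$ of the flipped edge $e$, so
\[\frac{\pi(\omega_1)}{\pi(\omega_2)}=\frac{w(\omega_1|_u)\,w(\omega_1|_v)}{w(\omega_2|_u)\,w(\omega_2|_v)},\]
and detailed balance requires this ratio to equal $P(\omega_2,\omega_1)/P(\omega_1,\omega_2)$ as read from (\ref{tmt}). In sub-case (a), where the two retained present edges $e_1,e_2$ at $u,v$ are not parallel, the weight assignment of Figure \ref{fig:sign} forces the product of the two local weight ratios to be $1$, matching the symmetric rate $C_\Lambda$. In each of the twelve remaining sub-cases (b)--(m), the product of the two local weight ratios at $u$ and $v$, read off from Figure \ref{fig:sign}, coincides with the prescribed ratio of coefficients in (\ref{tmt}) (for example $c^2/b^2$ in sub-case (b), matching $(C_\Lambda c/b)/(C_\Lambda b/c)$), giving detailed balance in that sub-case.

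The main obstacle is purely bookkeeping: one must verify directly from Figure \ref{fig:sign} that the weight assignment at black and white vertices is compatible with each of the thirteen listed edge-flip rates and that the six-fold product of local weights around any hexagonal face is preserved by the flip of Condition (3). Once these compatibilities are confirmed, $\pi$ satisfies detailed balance and is therefore reversible and stationary; for $\Lambda=\Lambda_{m,n}$, Proposition \ref{ir} gives irreducibility, and uniqueness of the stationary distribution of a finite irreducible chain then yields the last assertion.
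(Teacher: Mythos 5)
Your proposal is correct and follows essentially the same route as the paper: the paper's proof simply asserts that the detailed balance equations $\pi(x)P(x,y)=\pi(y)P(y,x)$ are straightforward to verify for the measure (\ref{gm}) and the matrix (\ref{tmt}), and then derives uniqueness from the irreducibility in Proposition \ref{ir}. Your case-by-case verification (the weight-preservation of the face flip, the cancellation in sub-case (a), and the matching of the ratio of rates to the ratio of local weights in sub-cases (b)--(m)) is exactly the ``straightforward check'' the paper leaves implicit.
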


\begin{proof}Let $\pi$ be a measure satisfying (\ref{gm}) with local weights given by $a,b,c>0$, and let $P$ be defined by (\ref{tmt}). Then it is straightforward to check that the detailed balance equations hold
\begin{eqnarray*}
\pi(x)P(x,y)=\pi(y)P(y,x),\qquad\forall x,y\in \Omega_{\Lambda,b_{\Lambda}}.
\end{eqnarray*}
This shows that the chain is reversible with the distribution $\pi$ as its stationary distribution. The uniqueness of the stationary distribution follows from the irreducibility, see Proposition \ref{ir}.
\end{proof}

\begin{proposition}\label{ap}The Markov chain defined by (\ref{tmt}) is aperiodic.
\end{proposition}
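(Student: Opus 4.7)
The plan is to prove aperiodicity directly from the definition of the transition matrix in (\ref{tmt}), exploiting the fact that $C_\Lambda$ was chosen precisely to guarantee a uniform lower bound on the holding probability at every state.

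First I would recall that a Markov chain on a finite state space is aperiodic at a state $\omega$ if $\gcd\{t\geq 1: P^t(\omega,\omega)>0\}=1$, and that a sufficient condition for aperiodicity of the whole chain is the existence of a self-loop at every state, i.e.\ $P(\omega,\omega)>0$ for all $\omega\in\Omega_{\Lambda,b_\Lambda}$. This is because $1\in\{t\geq 1:P^t(\omega,\omega)>0\}$ immediately forces the gcd to be $1$.

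Next I would verify the self-loop condition from the definition (\ref{tmt}). By construction, the diagonal entry is
\begin{eqnarray*}
P(\omega,\omega)=1-\sum_{\omega'\in\Omega_{\Lambda,b_\Lambda},\,\omega'\neq\omega}P(\omega,\omega').
\end{eqnarray*}
Each off-diagonal transition probability is bounded above by $C_\Lambda\cdot\max\{a,b,c\}/\min\{a,b,c\}$, and each non-zero off-diagonal entry corresponds either to a single-edge update (there are at most $|E_\Lambda|$ edges involved) or a single-face update (at most $|F_\Lambda|$ faces involved). Therefore the sum above contains at most $|E_\Lambda|+|F_\Lambda|$ non-zero terms, each bounded by $C_\Lambda\max\{a,b,c\}/\min\{a,b,c\}$. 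Plugging in
\begin{eqnarray*}
C_\Lambda=\frac{\min\{a,b,c\}}{2(|E_\Lambda|+|F_\Lambda|)\max\{a,b,c\}}
\end{eqnarray*}
yields $\sum_{\omega'\neq\omega}P(\omega,\omega')\leq 1/2$, so $P(\omega,\omega)\geq 1/2>0$ for every $\omega$, as already observed right after (\ref{tmt}).

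Combining these two observations gives aperiodicity at every state, hence of the chain. I do not expect any real obstacle here; the whole point of the normalizing constant $C_\Lambda$ was to force a lazy-type chain with a strictly positive holding probability, and aperiodicity is an immediate corollary. The only care needed is to note that when $\omega$ is isolated in the sense that no single-edge or single-face move produces another admissible configuration, then $P(\omega,\omega)=1$, which is still $>0$ and yields aperiodicity trivially.
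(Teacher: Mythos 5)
Your proof is correct and takes essentially the same approach as the paper: both argue that the choice of $C_\Lambda$ forces $P(\omega,\omega)\geq 1/2>0$ at every state, which immediately gives aperiodicity. Your write-up merely fills in the counting of non-zero off-diagonal entries in slightly more detail than the paper does.
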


\begin{proof}By (\ref{tmt}), we infer that for $\omega,\omega'\in \Omega_{\Lambda,b_{\Lambda}}$,  the transition matrix $P$ satisfies
\begin{eqnarray*}
\sum_{\omega'}P(\omega,\omega')=1;\qquad\qquad
P(\omega,\omega')\geq 0;\qquad\qquad
P(\omega,\omega)\geq \frac{1}{2}.
\end{eqnarray*}
The aperiodicity follows from $P(\omega,\omega)>0$, for all  $\omega\in \Omega_{\Lambda,b_{\Lambda}}$.
\end{proof}

\section{The Path Method and the Diameter Bound }\label{sc:ct}

In this section, we review the path method and the comparison theorem which relates the mixing time of the single site dynamics to the mixing time of the block dynamics for irreducible, reversible Markov chains; see Theorems \ref{tm41}-\ref{tm43}. We also discuss a diameter lower bound for the mixing time; see Theorem \ref{tm44}.

Let $\Omega$ be the state space. For a reversible transition matrix $P$, define $E=\{(x,y)\in \Omega^2: P(x,y)>0\}$. Let $x,y\in\Omega$. An $E$-path from $x$ to $y$ is a sequence $\Gamma=(e_1,e_2,\ldots, e_m)$ of edges in $E$, such that $e_1=(x,x_1)$, $e_2=(x_1,x_2)$,\ldots, $e_m=(x_{m-1},y)$ for some vertices $x_1,\ldots x_{m-1}\in \Omega$. The length of an $E$-path $\Gamma$ is denoted by $|\Gamma|$. 

Let $P$ and $\tilde{P}$ be two reversible transition matrices with stationary distributions $\pi$ and $\tilde{\pi}$, respectively.
Let 
\begin{eqnarray}
Q(x,y)&=&\pi(x)P(x,y).\label{q}\\
\tilde{Q}(x,y)&=&\tilde{\pi}(x)\tilde{P}(x,y).\label{tq}
\end{eqnarray}

 Supposing that for each $(x,y)\in \tilde{E}$, there is an $E$-path from $x$ to $y$, choose one and denote it by $\Gamma_{xy}$. Given such a choice of paths, define the \textbf{congestion ratio} $B$ by
\begin{eqnarray}
B:=\max_{e\in E}\left(\frac{1}{Q(e)}\sum_{\{x,y: e\in \Gamma_{xy}\}}\tilde{Q}(x,y)|\Gamma_{xy}|\right)\label{cr}
\end{eqnarray}

\begin{theorem}\label{tm41}(The Comparison Theorem; see Theorem 13.23 of \cite{LPW}.) Let $P$ and $\tilde{P}$ be reversible transition matrices, with stationary distributions $\pi$ and $\tilde{\pi}$, respectively. If $B$ is the congestion ratio for a choice of $E$-paths, as defined in (\ref{cr}), then
\begin{eqnarray*}
\tilde{\gamma}\leq\left[\max_{x\in \Omega}\frac{\pi(x)}{\tilde{\pi}(x)}\right]B\gamma.
\end{eqnarray*}
Here $\gamma=1-\lambda_2$, $\tilde{\gamma}=1-\tilde{\lambda}_2$ are the spectral gaps for $P$, $\tilde{P}$, respectively, and $\lambda_2$, $\tilde{\lambda}_2$ are second largest eigenvalues for $P$, $\tilde{P}$, respectively. 
\end{theorem}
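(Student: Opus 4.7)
The plan is to prove the comparison bound via the variational (Rayleigh-quotient) characterization of the spectral gap for a reversible chain and then compare the numerator (Dirichlet form) and denominator (variance) separately. Recall that for a reversible transition matrix $P$ with stationary distribution $\pi$, the Dirichlet form is
\begin{eqnarray*}
\mathcal{E}(f)=\frac{1}{2}\sum_{x,y\in\Omega}[f(x)-f(y)]^2 Q(x,y),
\end{eqnarray*}
and the spectral gap satisfies
\begin{eqnarray*}
\gamma=\min_{f:\,\mathrm{Var}_\pi(f)>0}\frac{\mathcal{E}(f)}{\mathrm{Var}_\pi(f)},
\end{eqnarray*}
with the analogous identities for $\tilde P$, $\tilde\pi$, $\tilde{\mathcal{E}}$, $\tilde\gamma$. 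The aim is therefore to show, for every $f$, that $\tilde{\mathcal{E}}(f)\leq B\,\mathcal{E}(f)$ and that $\mathrm{Var}_{\tilde\pi}(f)\geq\bigl[\max_x \pi(x)/\tilde\pi(x)\bigr]^{-1}\mathrm{Var}_\pi(f)$, since dividing these two inequalities yields the claim.

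The first comparison is where the paths enter. For each $(x,y)\in\tilde E$, write the telescoping identity along the chosen $E$-path $\Gamma_{xy}=(e_1,\ldots,e_m)$ with $e_k=(x_{k-1},x_k)$:
\begin{eqnarray*}
f(y)-f(x)=\sum_{k=1}^{m}\bigl[f(x_k)-f(x_{k-1})\bigr].
\end{eqnarray*}
Applying the Cauchy--Schwarz inequality gives $[f(y)-f(x)]^2\leq |\Gamma_{xy}|\sum_{e=(u,v)\in\Gamma_{xy}}[f(u)-f(v)]^2$. Plugging this into $\tilde{\mathcal{E}}(f)$, interchanging the order of summation so that each edge $e\in E$ is summed over all pairs $(x,y)$ whose path passes through $e$, and invoking the definition of $B$ yields
\begin{eqnarray*}
\tilde{\mathcal{E}}(f)&\leq&\frac{1}{2}\sum_{e=(u,v)\in E}[f(u)-f(v)]^2\sum_{\{(x,y):\,e\in\Gamma_{xy}\}}\tilde Q(x,y)|\Gamma_{xy}|\\
&\leq&\frac{1}{2}\sum_{e=(u,v)\in E}[f(u)-f(v)]^2\,B\,Q(e)=B\,\mathcal{E}(f).
\end{eqnarray*}

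The second comparison is a short calculation. Let $c^\star=\sum_x f(x)\tilde\pi(x)$; since $\mathrm{Var}_\pi(f)$ is the minimum of $\sum_x(f(x)-c)^2\pi(x)$ over $c$,
\begin{eqnarray*}
\mathrm{Var}_{\tilde\pi}(f)&=&\sum_{x\in\Omega}[f(x)-c^\star]^2\frac{\tilde\pi(x)}{\pi(x)}\pi(x)\\
&\geq& \Bigl[\min_{x\in\Omega}\frac{\tilde\pi(x)}{\pi(x)}\Bigr]\sum_{x\in\Omega}[f(x)-c^\star]^2\pi(x)\\
&\geq& \Bigl[\max_{x\in\Omega}\frac{\pi(x)}{\tilde\pi(x)}\Bigr]^{-1}\mathrm{Var}_\pi(f).
\end{eqnarray*}
Combining the two bounds and taking the minimum over $f$ in the variational formula for $\tilde\gamma$ gives exactly $\tilde\gamma\leq\bigl[\max_x\pi(x)/\tilde\pi(x)\bigr]B\,\gamma$.

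The only genuinely nontrivial step is the path-Cauchy--Schwarz bookkeeping that turns $\tilde{\mathcal{E}}(f)\leq B\,\mathcal{E}(f)$; the rest is straightforward manipulation. In particular, once one is careful with which edges carry which weights (the factor $|\Gamma_{xy}|$ comes from Cauchy--Schwarz, and the reindexing relies on each $e\in E$ being charged $\tilde Q(x,y)|\Gamma_{xy}|$ for every $(x,y)$ whose path uses $e$), the definition of the congestion ratio $B$ in (\ref{cr}) is precisely what is needed to complete the Dirichlet-form comparison.
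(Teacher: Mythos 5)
Your proof is correct: the Dirichlet-form comparison via telescoping along the canonical paths plus Cauchy--Schwarz, combined with the variance comparison through $\min_x \tilde\pi(x)/\pi(x)$ and the Rayleigh-quotient characterization of the gap, is exactly the standard argument. The paper itself gives no proof of this theorem---it only cites Theorem 13.23 of \cite{LPW}---and your argument is essentially the proof found in that reference, so there is nothing to reconcile.
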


\begin{theorem}\label{tm42}(Theorem 12.3 of \cite{LPW}) Let $P$ be the transition matrix of a reversible, irreducible Markov chain with state space $\Omega$, and let $\pi_{min}=\min_{x\in\Omega}\pi(x)$, then
\begin{eqnarray*}
t_{mix}(\epsilon)\leq\log\left(\frac{1}{\epsilon\pi_{\min}}\right)t_{rel},
\end{eqnarray*}
where $t_{rel}$ is the relaxation time given by 
\begin{eqnarray*}
t_{rel}=\frac{1}{\gamma_*},
\end{eqnarray*}
and $\gamma_*$ is the absolute spectral gap defined by
\begin{eqnarray*}
\gamma_*&=&1-\lambda_*\\
\lambda_*&=&\max\{|\lambda|: \lambda\ \mathrm{is\ an\ eigenvalue\ of\ }P, \lambda\neq 1 \}.
\end{eqnarray*}
\end{theorem}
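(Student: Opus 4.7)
The plan is to carry out the textbook spectral argument for reversible chains: first bound the chi--squared ($L^2$) distance from equilibrium using the spectral decomposition of $P$, and then pass to total variation via the Cauchy--Schwarz inequality
\[ 4\|\mu-\nu\|_{TV}^2 \;\leq\; \Bigl\|\tfrac{\mu}{\pi}-\tfrac{\nu}{\pi}\Bigr\|_{2,\pi}^2, \]
which holds for any two probability measures $\mu,\nu$ on $\Omega$.

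Since $P$ is reversible with respect to $\pi$, it is self-adjoint on $L^2(\pi)$, hence admits a real eigenbasis $f_1\equiv 1,\,f_2,\ldots,f_{|\Omega|}$ orthonormal in $L^2(\pi)$, with eigenvalues $1=\lambda_1>\lambda_2\geq\cdots\geq\lambda_{|\Omega|}\geq -1$ (irreducibility giving the simplicity of $\lambda_1$). The spectral representation of the $t$-step kernel and a Parseval identity (obtained by expanding the $\pi$-normalized point mass at $x$ in $\{f_j\}$) give
\[ \frac{P^t(x,y)}{\pi(y)} - 1 \;=\; \sum_{j\geq 2} \lambda_j^t f_j(x) f_j(y), \qquad \sum_{j=1}^{|\Omega|} f_j(x)^2 \;=\; \frac{1}{\pi(x)}. \]
Using these together with the orthonormality of $\{f_j\}$ produces
\[ \Bigl\|\tfrac{P^t(x,\cdot)}{\pi(\cdot)} - 1\Bigr\|_{2,\pi}^2 \;=\; \sum_{j\geq 2} \lambda_j^{2t} f_j(x)^2 \;\leq\; \lambda_*^{2t}\sum_{j=1}^{|\Omega|}f_j(x)^2 \;=\; \frac{\lambda_*^{2t}}{\pi(x)} \;\leq\; \frac{\lambda_*^{2t}}{\pi_{\min}}. \]

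Combined with the Cauchy--Schwarz inequality above, this gives $d(t) \leq \tfrac{1}{2}\lambda_*^t/\sqrt{\pi_{\min}}$. Writing $\lambda_* = 1-\gamma_*$ and invoking $(1-\gamma_*)^t \leq e^{-t\gamma_*}$, the bound $d(t)\leq \epsilon$ is achieved as soon as $t\gamma_* \geq \log\bigl(\tfrac{1}{2\epsilon\sqrt{\pi_{\min}}}\bigr)$. Since $\pi_{\min}\leq 1$ makes $\log(1/(2\epsilon\sqrt{\pi_{\min}}))\leq \log(1/(\epsilon\pi_{\min}))$, this yields $t_{mix}(\epsilon) \leq \log(1/(\epsilon\pi_{\min}))/\gamma_* = \log(1/(\epsilon\pi_{\min}))\,t_{rel}$. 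There is no substantial obstacle here: the argument is essentially algebraic once the self-adjoint spectral setup is in place, and indeed the entire proof is quoted from \cite{LPW}.
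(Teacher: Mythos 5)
Your argument is correct and is exactly the standard spectral proof of Theorem 12.3 in \cite{LPW}; the paper itself supplies no proof and simply quotes that result. The only cosmetic difference from the textbook version is that you pass to total variation via the $L^2$ Cauchy--Schwarz bound rather than the pointwise bound on $|P^t(x,y)/\pi(y)-1|$, which in fact yields the slightly sharper $\sqrt{\pi_{\min}}$ in the logarithm before you relax it to the stated form.
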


Exercise 12.3 of \cite{LPW} shows that if a chain is lazy, then all the eigenvalues of its transition matrix is nonnegative, and therefore $\gamma_*=\gamma$.

\begin{theorem}(Theorem 12.4 of \cite{LPW})\label{tm43} For a reversible, irreducible, and aperiodic Markov chain
\begin{eqnarray*}
t_{mix}\geq (t_{rel}-1)\log \left(\frac{1}{2\epsilon}\right).
\end{eqnarray*}
\end{theorem}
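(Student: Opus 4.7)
The plan is to extract a lower bound on $t_{mix}(\epsilon)$ from an eigenfunction of $P$ realizing the absolute spectral gap. Because the chain is reversible, $P$ is self-adjoint on $L^2(\pi)$, so all eigenvalues are real and eigenfunctions of distinct eigenvalues are orthogonal in $L^2(\pi)$. By aperiodicity and the assumption that $1$ is excluded in the definition of $\la_*$, we have $\la_* < 1$. Pick a real eigenvalue $\la$ with $|\la| = \la_*$ and a real eigenfunction $f$ for $\la$. Since $\la \neq 1$ and the $\la=1$ eigenspace is spanned by constants, $\pi$-orthogonality yields $\sum_y \pi(y) f(y) = 0$.

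The heart of the argument is to compare $P^t f$ with $\pi(f)=0$ in two different ways. Pointwise, $P^t f(x) = \la^t f(x)$. On the other hand, using $\sum_y \pi(y) f(y) = 0$,
\begin{equation*}
|P^t f(x)| \;=\; \Bigl|\sum_y \bigl[P^t(x,y) - \pi(y)\bigr] f(y)\Bigr| \;\leq\; \|f\|_\infty \sum_y |P^t(x,y) - \pi(y)| \;=\; 2\|f\|_\infty\, \|P^t(x,\cdot) - \pi\|_{TV}.
\end{equation*}
Choosing $x_0$ at which $|f|$ attains its maximum and combining the two expressions gives $\la_*^t \|f\|_\infty \leq 2 \|f\|_\infty \, d(t)$, i.e.\ $\la_*^t \leq 2 d(t)$. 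Evaluated at $t = t_{mix}(\epsilon)$ where $d(t) \leq \epsilon$, this yields $\la_*^{t_{mix}(\epsilon)} \leq 2\epsilon$, hence
\begin{equation*}
t_{mix}(\epsilon) \;\geq\; \frac{\log(1/(2\epsilon))}{\log(1/\la_*)}.
\end{equation*}

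To put this into the relaxation-time form stated in the theorem, I would invoke the elementary inequality $\log(1/x) \leq (1-x)/x$ for $x \in (0,1)$, which follows by checking that $h(x) = \log x + 1/x - 1$ satisfies $h(1)=0$ and $h'(x) = (x-1)/x^2 < 0$ on $(0,1)$. Applied with $x = \la_*$,
\begin{equation*}
\frac{1}{\log(1/\la_*)} \;\geq\; \frac{\la_*}{1-\la_*} \;=\; \frac{1-\gamma_*}{\gamma_*} \;=\; t_{rel} - 1,
\end{equation*}
so that $t_{mix}(\epsilon) \geq (t_{rel}-1)\log(1/(2\epsilon))$, as claimed.

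I do not anticipate a serious obstacle: the key step is the pointwise/total-variation comparison of $P^t f$ with $0$, which is standard, and reversibility plus $\la_* < 1$ guarantees the existence of a real $\pi$-centered eigenfunction. The only genuine care concerns degeneracies: if $\la_* = 0$ then $t_{rel}=1$ and the claimed bound is vacuous, so one may assume $\la_* \in (0,1)$ when applying the calculus estimate. Otherwise, no ingredient beyond reversibility, aperiodicity, and irreducibility is used.
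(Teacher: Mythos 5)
Your proof is correct and is essentially the standard argument from the cited source: the paper itself gives no proof of this statement, simply quoting Theorem 12.4 of \cite{LPW}, and your eigenfunction comparison $\la_*^t\|f\|_\infty\leq 2\|f\|_\infty d(t)$ followed by the elementary bound $\log(1/\la_*)\leq \gamma_*/\la_*$ is exactly the proof given there. The handling of the degenerate case $\la_*=0$ and the use of reversibility to obtain a real, $\pi$-centered eigenfunction are both correctly noted.
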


\begin{theorem}\label{tm45}Let $P$ be a transition matrix on a metric space $(\Omega,\rho)$, where the metric $\rho$ satisfies $\rho(x,y)\geq \mathbf{1}\{x\neq y\}$. Suppose, for all states $x$ and $y$, there exists a coupling $(X_1,Y_1)$ of $P(x,\cdot)$ with $P(y,\cdot)$ that contracts $\rho$ on average, i.e., which satisfies
\begin{eqnarray*}
\mathbf{E}_{x,y}\rho(X_1,Y_1)\leq e^{-\alpha}\rho(x,y),
\end{eqnarray*}
for some $\alpha>0$. Let
\begin{eqnarray*}
D_{\rho}(\Omega):=\max_{x,y\in \Omega}\rho(x,y),
\end{eqnarray*}
then we have
\begin{eqnarray*}
t_{mix}(\epsilon)\leq\left\lceil\frac{\log(D_{\rho}(\Omega))+\log\left(\frac{1}{\epsilon}\right)}{\alpha}\right\rceil
\end{eqnarray*}
\end{theorem}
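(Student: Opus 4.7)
The plan is to prove Theorem \ref{tm45} by the standard contraction-coupling argument, iterating the one-step coupling to obtain a multi-step coupling whose expected distance decays geometrically, and then applying the coupling inequality.

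First, I would upgrade the one-step coupling to a $t$-step coupling. For each ordered pair $(x,y) \in \Omega^2$, fix a coupling $\Phi_{x,y}$ of $P(x,\cdot)$ with $P(y,\cdot)$ satisfying the contraction hypothesis. Given initial states $x,y$, build $(X_t, Y_t)_{t \geq 0}$ inductively: set $(X_0, Y_0) = (x, y)$ and, conditional on $(X_t, Y_t)$, draw $(X_{t+1}, Y_{t+1})$ from $\Phi_{X_t, Y_t}$. By construction the marginals are correct, so $(X_t, Y_t)$ is a coupling of $P^t(x,\cdot)$ with $P^t(y,\cdot)$. Applying the tower property and the contraction hypothesis one step at a time yields by induction
\begin{eqnarray*}
\mathbf{E}_{x,y}\rho(X_t, Y_t) \leq e^{-\alpha t}\rho(x,y) \leq e^{-\alpha t} D_{\rho}(\Omega).
\end{eqnarray*}

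Next, I would convert this into a total variation bound. Since $\rho(x,y) \geq \mathbf{1}\{x \neq y\}$, the assumption $\rho \geq \mathbf{1}\{\cdot \neq \cdot\}$ implies $\mathbf{P}_{x,y}(X_t \neq Y_t) \leq \mathbf{E}_{x,y}\rho(X_t, Y_t)$. The coupling inequality (see, e.g., Proposition 4.7 of \cite{LPW}) then gives
\begin{eqnarray*}
\|P^t(x,\cdot) - P^t(y,\cdot)\|_{TV} \leq \mathbf{P}_{x,y}(X_t \neq Y_t) \leq e^{-\alpha t} D_{\rho}(\Omega).
\end{eqnarray*}
To pass to $\|P^t(x,\cdot) - \pi\|_{TV}$, I would use the stationarity identity $\pi(\cdot) = \sum_{y}\pi(y) P^t(y, \cdot)$ and the triangle inequality:
\begin{eqnarray*}
\|P^t(x,\cdot) - \pi\|_{TV} \leq \sum_{y \in \Omega}\pi(y)\|P^t(x,\cdot) - P^t(y,\cdot)\|_{TV} \leq e^{-\alpha t} D_{\rho}(\Omega).
\end{eqnarray*}

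Finally I would solve for $t$. Choosing $t = \lceil (\log D_{\rho}(\Omega) + \log(1/\epsilon))/\alpha \rceil$ yields $\alpha t \geq \log(D_{\rho}(\Omega)/\epsilon)$, whence $e^{-\alpha t}D_{\rho}(\Omega) \leq \epsilon$, and taking the maximum over the starting state $x$ gives $d(t) \leq \epsilon$, hence $t_{mix}(\epsilon) \leq t$, as required.

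There is really no hard step in this argument; it is essentially bookkeeping around three standard ingredients (inductive coupling, coupling inequality, and the stationarity-triangle trick). The one thing to be careful about is the measurability in the inductive construction of $(X_t, Y_t)$ — one must ensure that $\Phi_{x,y}$ is chosen jointly measurably in $(x,y)$ so that the conditional distribution $\Phi_{X_t, Y_t}$ is well defined. Since $\Omega$ is finite this is automatic, so the proof goes through without issue.
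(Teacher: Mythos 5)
Your proof is correct and is essentially the argument the paper defers to: the paper simply cites Section 14 of \cite{LPW} (Theorem 14.6 there), whose proof is exactly this iteration of the contracting coupling, followed by the coupling inequality via $\rho\geq\mathbf{1}\{x\neq y\}$ and the bound on $\|P^t(x,\cdot)-\pi\|_{TV}$ by averaging over the second starting state. No discrepancies to report.
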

\begin{proof}See Section 14 of \cite{LPW}.
\end{proof}

Next we discuss a diameter lower bound on the mixing time. Given a transition matrix $P$ on $\Omega$, construct a graph with vertex set $\Omega$ and which includes the edge $\{x,y\}$ for all $x$ and $y$ with $P(x,y)+P(y,x)>0$. Define the \textbf{diameter} of a Markov chain to be the diameter of this graph, that is, the maximal graph distance between distinct vertices.

\begin{theorem}\label{tm44}Let $P$ be an irreducible and aperiodic transition matrix with diameter $L$, then for any $\epsilon<\frac{1}{2}$
\begin{eqnarray*}
t_{mix}(\epsilon)\geq \frac{L}{2}.
\end{eqnarray*}
\end{theorem}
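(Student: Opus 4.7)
The plan is to use a standard support-containment argument: if the diameter of the transition graph is $L$, then two endpoints of a realizing geodesic remain far apart in total variation for roughly $L/2$ steps, so neither can be close to the stationary distribution.

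First, I would fix states $x, y \in \Omega$ whose graph distance (in the graph described just before the theorem) equals $L$. For any integer $t$, let $A_t(x) := \{z \in \Omega : d(x,z) \leq t\}$, where $d$ is graph distance. By induction on $t$ and the definition of the graph (edges correspond to $P(u,v) + P(v,u) > 0$), the support of $P^t(x, \cdot)$ is contained in $A_t(x)$; the same holds with $y$ in place of $x$.

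Next, I would argue that when $t < L/2$ the sets $A_t(x)$ and $A_t(y)$ are disjoint: any common point $z$ would yield $d(x,y) \leq d(x,z) + d(z,y) \leq 2t < L$, contradicting $d(x,y) = L$. Let $A = A_t(x)$. Then $P^t(x, A) = 1$ and $P^t(y, A) = 0$, so
\begin{eqnarray*}
\|P^t(x,\cdot)-\pi\|_{TV} + \|P^t(y,\cdot)-\pi\|_{TV}
&\geq& |P^t(x,A)-\pi(A)| + |P^t(y,A)-\pi(A)| \\
&\geq& (1-\pi(A)) + \pi(A) \;=\; 1,
\end{eqnarray*}
so at least one of the two summands is $\geq 1/2$, giving $d(t) \geq 1/2 > \epsilon$ for every integer $t$ strictly less than $L/2$. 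Consequently $t_{mix}(\epsilon) > t$ for every such $t$, and hence $t_{mix}(\epsilon) \geq \lceil L/2 \rceil \geq L/2$.

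There is essentially no obstacle here; the only delicate point is the careful choice of the set $A$ and the use of the triangle inequality for the total variation distance to convert the disjoint-support statement into a lower bound for $d(t)$. Irreducibility and aperiodicity enter only to guarantee existence of the stationary distribution $\pi$ and the finiteness of $L$ (so that the graph distance is well-defined on all of $\Omega$); they are not otherwise used in the argument.
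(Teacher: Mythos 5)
Your proof is correct and is precisely the standard diameter-bound argument from Section 7.1.2 of the reference the paper cites for this theorem (Levin--Peres--Wilmer): the supports of $P^t(x,\cdot)$ and $P^t(y,\cdot)$ for a diameter-realizing pair $x,y$ remain disjoint while $2t<L$, which forces $d(t)\geq 1/2>\epsilon$. Nothing further is needed.
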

\begin{proof}See Section 7.1.2 of \cite{LPW}.
\end{proof}

\section{Mixing Time for the 1-2 Model on  a $k\times n$ Rectangle}\label{sc:cl}

In this section, we prove Theorem \ref{m1}.

Let $\Lambda_{k,n}$, $b_{k,n}$ be given as in Theorem \ref{m1}. Let $F$ be the set of faces of $\Lambda_{k,n}$. Assume that the vertices in $\Lambda_{k,n}$ are labeled by ordered pairs of integers $(p,q)$ satisfying $1\leq p\leq k$ and $1\leq q\leq n$. See Figure \ref{fig:cld} for an example.

\begin{figure}[htbp]
\centerline{\includegraphics*{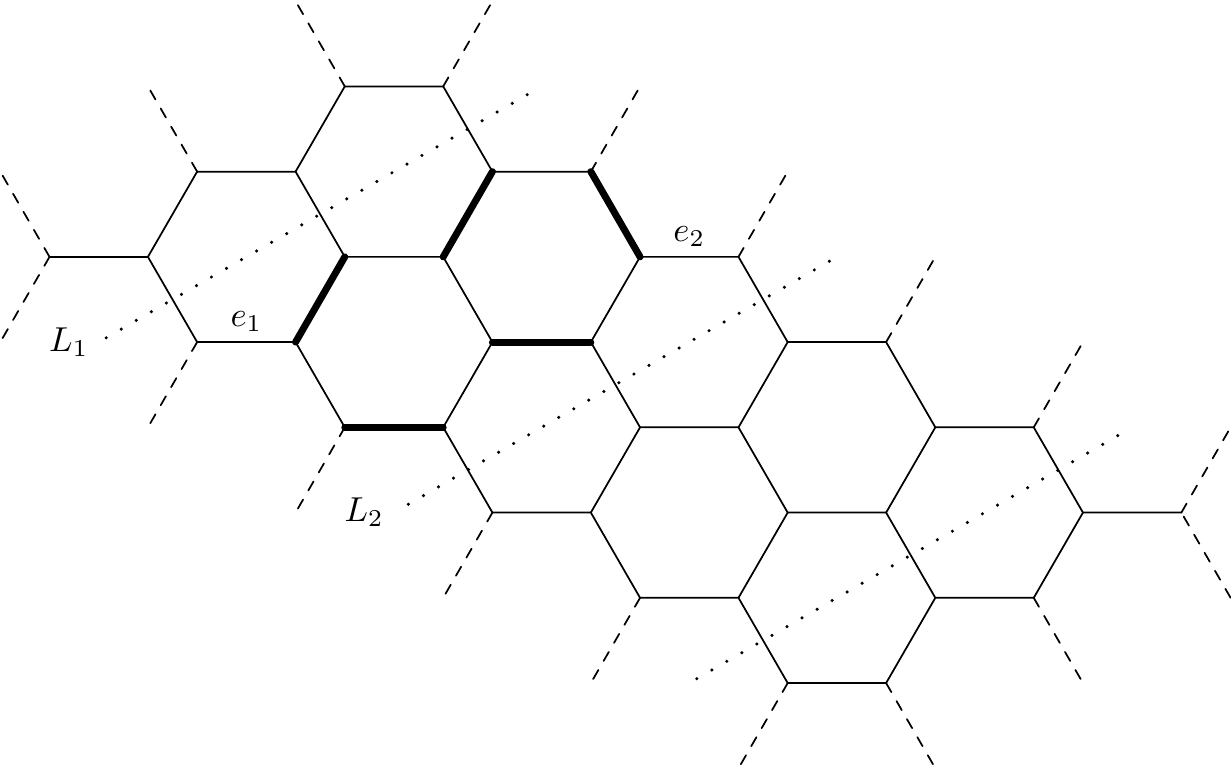}}
   \caption{A rectangular hexagonal lattice $\Lambda_{3,6}$. Dashed edges are boundary edges; dotted lines are boundaries separating different $G_i$'s.}
   \label{fig:cld}
\end{figure}

Assume that both $n$ and $\ell$ are even.
For $1\leq j\leq \frac{\ell}{2}$, define the $j$th block to be the vertex set
\begin{eqnarray*}
V_j=\{(p,q):1\leq p\leq k, 1\leq q\leq 2j\}.
\end{eqnarray*}
For $\frac{\ell}{2}+1\leq j\leq n-\frac{\ell}{2}$, define the $j$th block to be the vertex set 
\begin{eqnarray*}
V_j=\{(p,q):1\leq p\leq k,  j-\frac{\ell}{2}+1\leq p\leq j+\frac{\ell}{2}\}.
\end{eqnarray*}
For $n-\frac{\ell}{2}+1\leq j\leq n$, define the $j$th block to be the vertex set 
\begin{eqnarray*}
V_j=\{(p,q):1\leq p\leq k, 2j-n-1\leq q\leq n\}.
\end{eqnarray*}
Let $E_j$ be the set of edges whose both endpoints are in $V_j$.

The block dynamics for the 1-2 model on $\Lambda_{k,n}$ is the Markov chain defined as follows. A block $V_i$ is picked uniformly at random among the $n$ blocks, and the configuration $\sigma$ is updated according to the measure $\pi$ on 1-2 model configurations satisfying (\ref{gm}) and  conditioned to agree with $\sigma$ everywhere outside $E_i$. More precisely, let $\Omega_{\Lambda_{k,n},b_{k,n}}\subset\{0,1\}^E$ be the set of all 1-2 model configurations on $\Lambda_{k,n}$ with the admissible boundary condition $b_{k,n}$, and let
\begin{eqnarray*}
\Omega_{\sigma,E_i}=\{\tau\in\Omega_{\Lambda_{k,n},b_{k,n}}:\tau(e)=\sigma(e)\qquad\mathrm{for\ all\ }e\notin E_i\}
\end{eqnarray*}
be the set of configurations agreeing with $\sigma$ outside $E_i$. Define the transition matrix
\begin{eqnarray*}
P_{E_i}(\sigma,\tau)=\pi(\tau|\Omega_{\sigma,E_i}).
\end{eqnarray*}

The block dynamics has transition matrix
\begin{eqnarray}
\tilde{P}:=\frac{1}{n}\sum_{i=1}^{n}P_{E_i}.\label{bd}
\end{eqnarray}

It is straightforward to check the following lemma.

\begin{lemma}The block dynamics with transition matrix defined by (\ref{bd}) is an irreducible, reversible, aperiodic Markov chain whose stationary distribution is the  distribution $\pi$ satisfying (\ref{gm}) on $\Omega_{\Lambda_{k,n},b_{k,n}}$.
\end{lemma}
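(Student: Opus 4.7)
The plan is to verify the three properties (irreducibility, reversibility with stationary distribution $\pi$, aperiodicity) by reducing each one to a statement about the individual block updates $P_{E_i}$ and then observing that the uniform average over $i$ preserves what we need.

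For reversibility and stationarity, I would first check that each $P_{E_i}$ is itself reversible with respect to $\pi$. If $\tau \in \Omega_{\sigma,E_i}$, then the equivalence relation ``$\sigma$ and $\tau$ agree off $E_i$'' is symmetric, so $\Omega_{\sigma,E_i} = \Omega_{\tau,E_i}$, and hence
\begin{eqnarray*}
\pi(\sigma)P_{E_i}(\sigma,\tau) \;=\; \pi(\sigma)\,\frac{\pi(\tau)}{\pi(\Omega_{\sigma,E_i})} \;=\; \pi(\tau)\,\frac{\pi(\sigma)}{\pi(\Omega_{\tau,E_i})} \;=\; \pi(\tau)P_{E_i}(\tau,\sigma).
\end{eqnarray*}
If $\tau \notin \Omega_{\sigma,E_i}$, then $\sigma \notin \Omega_{\tau,E_i}$ and both sides vanish. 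Averaging over $i$ is a convex combination, so the detailed balance equations also hold for $\tilde{P}$. Consequently $\pi$ is a stationary distribution of $\tilde{P}$, and by the irreducibility proved below it is the unique one.

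For irreducibility, the key observation is that for any $\sigma$ and any $\tau \in \Omega_{\sigma,E_i}$ we have $P_{E_i}(\sigma,\tau) = \pi(\tau)/\pi(\Omega_{\sigma,E_i}) > 0$, so a single block update can realize any configuration change whose support lies inside some $E_i$. Now take two arbitrary configurations $\omega_1,\omega_2\in\Omega_{\Lambda_{k,n},b_{k,n}}$. By Proposition \ref{ir}, there exists a sequence of single-edge and single-face moves of the type described in conditions (2) and (3) of Section \ref{mch} that carries $\omega_1$ to $\omega_2$. Each such move is supported either on a single edge or on the six edges bounding a single face; since every edge of $\Lambda_{k,n}$ and every internal face lies in $E_i$ for some $i$ (the blocks $V_j$ cover $\Lambda_{k,n}$ and successive blocks overlap substantially because each has width $\ell$), each elementary move can be realized by a suitable block update with strictly positive probability. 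Concatenating these gives a positive-probability path under $\tilde{P}$ from $\omega_1$ to $\omega_2$, proving irreducibility.

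Aperiodicity is then immediate: for every $\sigma$ and every $i$, we have $P_{E_i}(\sigma,\sigma) = \pi(\sigma)/\pi(\Omega_{\sigma,E_i}) > 0$, so $\tilde{P}(\sigma,\sigma) > 0$ and the chain has self-loops at every state. I expect the only mildly delicate point to be the covering argument in irreducibility, namely making sure that every edge and every face of $\Lambda_{k,n}$ used in the single-step manipulations of Proposition \ref{ir} is contained in at least one block $E_i$; this is a direct check from the definitions of $V_j$ given that $\ell \geq 2$ and that the blocks sweep across the long direction of the rectangle with overlap.
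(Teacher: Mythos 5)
The paper offers no written proof of this lemma (it is dismissed as "straightforward to check"), and your verification supplies exactly the standard argument the author intends: detailed balance for each $P_{E_i}$ via the symmetry $\Omega_{\sigma,E_i}=\Omega_{\tau,E_i}$, irreducibility by realizing the single-edge/single-face moves of Proposition \ref{ir} inside overlapping blocks, and aperiodicity from the self-loops $P_{E_i}(\sigma,\sigma)>0$. Your proof is correct, and the one point you flag — that every edge and every face used in an elementary move lies inside some block $E_i$ — does indeed hold because the blocks have full height $k$, width $\ell\ge 2$, and sweep the long direction with overlap.
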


We will apply the comparison theorem, which requires that we define for each block move from $\sigma$ to $\tau$, a sequence of single site moves from $\sigma$ and ending at $\tau$. 

Let $\sigma,\tau\in\Omega_{\Lambda_{k,n},b_{k,n}}$, such that $\sigma$ and $\tau$ differ only in $E_i$. Let $\HH_{\Delta,\Lambda_{k,n}}$ be a $k\times n$ rectangular subgraph  the decorated graph $\HH_{\Delta}$ corresponding to $\Lambda_{k,n}$. Let $D_{\sigma}$ (resp.\ $D_{\tau}$) be the dimer configuration on $\HH_{\Delta, \Lambda_{k,n}}$ corresponding to $\sigma$ (resp.\ $\tau$). Assume that $\ell\gg k$. We have
\begin{eqnarray}
d(D_{\sigma},D_{\tau})\leq C_1(k)\ell,\qquad \forall \sigma,\tau\in \Omega.\label{dst}
\end{eqnarray}
where $C_1(k)$ is a constant depending only on $k$, and $d(D_{\sigma},D_{\tau})$ is the distance of $D_{\sigma}$ and $D_{\tau}$ as defined in (\ref{d12}). Using the process as in the proof of Proposition \ref{ir}, we find a path
\begin{eqnarray*}
\sigma(=\sigma_0),\sigma_1,\ldots,\sigma_k(=\tau),
\end{eqnarray*}
such that $\sigma_i$ and $\sigma_{i+1}$ ($0\leq i\leq k-1$) differ at exactly one edge or one face, as described in (2) or (3). Note that
\begin{eqnarray*}
|\Gamma_{\sigma\tau}|\leq C_1(k)\ell,
\end{eqnarray*}
by (\ref{dst}). For these paths we must bound the congestion ratio defined by (\ref{cr}).

Let $\xi=(\sigma_0,\tau_0)$, where $\sigma_0$ and $\tau_0$ agree everywhere except at an edge $e$ or a face $f$, as described in (2) or (3).  Let $Q(\xi)$, $\tilde{Q}(\xi)$ be defined as in (\ref{q}), (\ref{tq}), respectively.

We have
\begin{eqnarray*}
R_{\xi}:&=&\frac{1}{Q(\xi)}\sum_{\{\sigma,\tau:\xi\in \Gamma_{\sigma,\tau}\}}\pi(\sigma)\tilde{P}(\sigma,\tau)|\Gamma_{\sigma,\tau}|\\
&\leq &C_1(k) \ell\sum_{\{\sigma,\tau:\xi\in \Gamma_{\sigma,\tau}\}}\frac{1}{n}\sum_{i=1}^n\frac{\tilde{P}_{E_i}(\sigma,\tau)\pi(\sigma)}{P(\sigma_0,\tau_0)\pi(\sigma_0)}.
\end{eqnarray*}
where $\pi$ is the distribution on $\Omega_{\Lambda_{k,n},b_{k,n}}$ in which the probability of a configuration in proportional to the product of local weights at each vertex, and the local weights are given by $a,b,c>0$. Since $\sigma$ and $\sigma_0$ differ in at most one block, we have
\begin{eqnarray*}
\left(\frac{\min\{a,b,c\}}{\max\{a,b,c\}}\right)^{2lk}\leq \frac{\pi(\sigma)}{\pi(\sigma_0)}\leq \left(\frac{\max\{a,b,c\}}{\min\{a,b,c\}}\right)^{2lk}
\end{eqnarray*}
 Let 
\begin{eqnarray*}
M&=&\max_{1\leq i\leq n}|E_i|\leq C_2(k)\ell\\
M^*&=&\max\{\max_{e\in E_{n,k}}|\{i,e\in E_i\}|,\max_{f\in F}|\{i: f\subseteq E_i\}\}|\leq C_2(k)\ell,
\end{eqnarray*}
where $C_2(k)>0$ is a constant depending only on $k$; by $f\subseteq E_i$ we mean every edge of the face $f$ is in $E_i$. We have
\begin{eqnarray*}
\frac{P_{E_i}(\sigma,\tau)}{P(\sigma_0,\tau_0)}=\frac{2(|E_{\Lambda_{n,k}}|+|F_{\Lambda_{n,k}}|)[\max\{a,b,c\}]^2}{[\min\{a,b,c\}]^2}\pi(\tau|\Omega_{\sigma,E_i}),
\end{eqnarray*}
where $P(\sigma_0,\tau_0)$ is defined by (\ref{tmt}), $|E_{\Lambda_{n,k}}|$ (resp.\ $|F_{\Lambda_{n,k}}|$) is the number of edges (resp.\ faces) in $\Lambda_{n,k}$.
Hence we have
\begin{eqnarray}
R_{\xi}&\leq& \frac{C_1(k)\ell}{n}\sum_{\{\sigma,\tau:\xi\in\Gamma_{\sigma,\tau}\}}\sum_{i=1}^{n}2(|E_{\Lambda_{n,k}}|+|F_{{\Lambda}_{n,k}}|)\pi(\tau|\Omega_{\sigma,E_i})\left(\frac{\max\{a,b,c\}}{\min\{a,b,c\}}\right)^{2lk+1}\notag\\
&\leq&\frac{2C_1(k)\ell(|E_{\Lambda_{n,k}}|+|F_{\Lambda_{n,k}}|)}{n}\left(\frac{\max\{a,b,c\}}{\min\{a,b,c\}}\right)^{2lk+1}\left[\sum_i\mathbf{1}_{\{e\in E_i\ \text{or}\ f\subseteq E_i\}}\right.\\
&&\times\left.\left(\sum_{\sigma,\tau:\xi\in \Gamma_{\sigma\tau}}\pi(\tau|\Omega_{\sigma,E_i})\right)\right]\notag\\
&\leq&C_3(k)\ell^2 C_5^{C_4(k)\ell},\label{rx}
\end{eqnarray}
where $C_3(k)$, $C_4(k)$ are constants depending only on $k$, and $C_5$ is a constant depending on $a,b,c$ and independent of $k$.

Thus we have the following lemma
\begin{lemma}\label{l52}Let $\gamma_B$ (resp. $\gamma$) be the spectral gap for the block (single-edge or single face) dynamics of the 1-2 model on the $n\times k$ box $\Lambda_{n,k}$ of the hexagonal lattice $\HH$ with transition matrix $\tilde P$ (resp.\ $P$) defined by (\ref{bd}) (resp.\ (\ref{tmt})), then
\begin{eqnarray*}
\gamma_B\leq C_3(k)\ell^2 C_5^{C_4(k)\ell}\gamma,
\end{eqnarray*}
where $C_3(k)$, $C_4(k)$ are constants depending only on $k$, and $C_5$ is a constant depending on $a,b,c$ and independent of $k$.
\end{lemma}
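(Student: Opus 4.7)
My plan is to invoke the Comparison Theorem (Theorem \ref{tm41}) directly, since the preparatory work for the congestion ratio has already been carried out in the displays leading up to (\ref{rx}). The key observation that simplifies the application is that both the block chain $\tilde P$ defined by (\ref{bd}) and the single-edge/single-face chain $P$ defined by (\ref{tmt}) are reversible with respect to the \emph{same} stationary measure $\pi$ given by (\ref{gm}): for $\tilde P$ this is immediate from the heat-bath construction, and for $P$ it is Proposition \ref{re}. Consequently $\max_{x\in\Omega}\pi(x)/\tilde\pi(x)=1$, and Theorem \ref{tm41} collapses to $\gamma_B \le B\gamma$, where $B$ is the congestion ratio (\ref{cr}) for whatever choice of $E$-paths we make.

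Next I would specify the paths $\{\Gamma_{\sigma,\tau}\}$ over which the congestion is measured. For each $(\sigma,\tau)$ with $\tilde P(\sigma,\tau)>0$, the configurations $\sigma,\tau$ agree outside a single block $E_i$, so the corresponding dimer configurations $D_\sigma,D_\tau$ satisfy the bound (\ref{dst}). I would then apply the constructive procedure of Proposition \ref{ir}, namely the edge-by-edge/face-by-face decrements provided by Lemmas \ref{l33} and \ref{l34} on the dimer side, and translate the resulting sequence back through Lemma \ref{sm} into a sequence of moves of types (2) and (3) on the 1-2 model side. This produces a path $\Gamma_{\sigma,\tau}$ of length at most $C_1(k)\ell$ whose every step is an edge of the single-site graph.

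With these paths fixed, the congestion ratio reduces to $B = \max_{\xi} R_\xi$, where $R_\xi$ is precisely the quantity estimated in the chain of inequalities ending at (\ref{rx}). The estimate rests on three ingredients: (i) the pointwise comparison $(\min\{a,b,c\}/\max\{a,b,c\})^{2\ell k} \le \pi(\sigma)/\pi(\sigma_0) \le (\max\{a,b,c\}/\min\{a,b,c\})^{2\ell k}$, because $\sigma$ and $\sigma_0$ differ only within a single block; (ii) the formula for $P(\sigma_0,\tau_0)$ given by (\ref{tmt}), which contributes a polynomial factor in $|E_{\Lambda_{n,k}}|+|F_{\Lambda_{n,k}}|$; and (iii) the uniform bounds $M,M^* \le C_2(k)\ell$ on the number of blocks that can contain a given edge or face. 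Summing $\pi(\tau\mid \Omega_{\sigma,E_i})$ over $\tau$ at fixed $\sigma$ yields $1$, and a bounded number of $\sigma$'s can share a given $\tau$ across a fixed edge, so the outer sum is controlled. Combining everything, $R_\xi \le C_3(k)\ell^2 C_5^{C_4(k)\ell}$ uniformly in $\xi$, whence $B \le C_3(k)\ell^2 C_5^{C_4(k)\ell}$ and Theorem \ref{tm41} gives the claimed bound on $\gamma_B$.

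The bulk of the genuine work is already embedded in the derivation of (\ref{rx}); the only step that requires any care is the combinatorial accounting that turns the inner sum $\sum_{\sigma,\tau:\xi\in\Gamma_{\sigma,\tau}}\pi(\tau\mid\Omega_{\sigma,E_i})$ into the factors $C_3(k)\ell^2 C_5^{C_4(k)\ell}$. That step uses both the path-length bound $|\Gamma_{\sigma,\tau}|\le C_1(k)\ell$ and the localisation of block moves to at most $C_2(k)\ell$ blocks, with the exponential factor $C_5^{C_4(k)\ell}$ absorbing the worst-case weight distortions coming from (i). The main conceptual obstacle—had we not already dealt with it—would be ensuring that the irreducibility path produced by Lemmas \ref{l33} and \ref{l34} can be chosen canonically enough that the indicator $\mathbf{1}\{\xi\in\Gamma_{\sigma,\tau}\}$ does not inflate the sum, but since the paths are built by a deterministic sequential reduction of the distance (\ref{d12}), this is automatic.
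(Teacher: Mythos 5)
Your proposal is correct and follows essentially the same route as the paper: the paper's own proof is the one-line observation that the lemma follows from the Comparison Theorem (Theorem \ref{tm41}) applied with the canonical paths built from the irreducibility argument of Proposition \ref{ir}, together with the congestion-ratio estimate (\ref{rx}) derived immediately before the lemma. Your additional remarks (that $\pi=\tilde\pi$ so the prefactor in Theorem \ref{tm41} is $1$, and that the exponential factor absorbs both the weight distortion and the count of pairs sharing a transition $\xi$) are exactly the details the paper leaves implicit.
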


\begin{proof}The lemma follows from Theorem \ref{tm41} and (\ref{rx}).
\end{proof}

Recall that for the Markov chain with transition matrix defined by (\ref{tmt}), the spectral gap is the same as the absolute spectral gap; see Proposition \ref{ap}. Let $t_{mix}$ (resp.\ $t_{rel}$) be the mixing time (resp.\ relaxation time) for the Markov chain with transition matrix defined by (\ref{tmt}) and state space $\Omega_{\Lambda_{k,n},b_{k,n}}$.
Then by Lemma \ref{l52} and Theorem \ref{tm42}, we have
\begin{eqnarray}
t_{mix}(\epsilon)&\leq& \log\left(\frac{1}{\epsilon\min_{\omega\in \Omega_{\Lambda_{k,n},b_{k,n}}}\pi(\omega)}\right)t_{rel}\\
&\leq&  C_3(k)\ell^2 4^{C_4(k)\ell}\log\left(\frac{1}{\epsilon\min_{\omega\in \Omega_{\Lambda_{k,n},b_{k,n}}}\pi(\omega)}\right)\frac{1}{\gamma_B},\label{tm}
\end{eqnarray}
where $\pi(\cdot)$, as before, is a probability measure on $\Omega_{\Lambda_{k,n},b_{k,n}}$, where the probability of a 1-2 model configuration is proportional to the product of local weight on each vertex.

If a block $V_j$ is selected, we will apply a sequential method of updating as follows. When $\ell$ is even, we divide the block $V_j$ into $\frac{\ell}{2}$ different sub-blocks: $G_{j,1},\ldots, G_{j,\frac{\ell}{2}}$, such that each $G_{j,i} (1\leq i\leq \frac{\ell}{2})$ is of size $2\times k$, see Figure \ref{fig:cld}, where different $2\times k$ columns are separated by dotted lines. If the block has less than $\ell$ columns, we still divide the block into sub-blocks, each of which has size $2\times k$.

For $1\leq i<\frac{\ell}{2}$, let $\gamma_{j,i}$ be the random variable denoting the states of all edges in $G_{j,i}$, as well as the states of all edges connecting $G_{j,i}$ and $G_{j,i+1}$. Let $\gamma_{j,\frac{\ell}{2}}$ be the random variable denoting the states of all the edges in $G_{j,\frac{\ell}{2}}$, as well as the states of all the edges connecting $G_{j,\frac{\ell}{2}}$ and $G_{j+\ell,1}$. The conditional distribution of $\gamma_{j,i+1}$, given $(\gamma_{j,1},\ldots, \gamma_{j,i})$ and $\gamma_{j+\ell,1}$ depends only on $\gamma_{j,i}$ and $\gamma_{j+\ell,1}$. Therefore given $\gamma_{j-\ell,\frac{\ell}{2}}$ and $\gamma_{j+\ell,1}$, the sequence $(\gamma_{j,i})_{i=1}^{\frac{\ell}{2}}$ is a time-inhomogeneous Markov chain. If a block $V_j$ is selected to be updated in the block dynamics, the update can be realized by running this chain.
 
We now describe how to couple the block dynamics started from $\sigma$, with the block dynamics started from $\tau$, in the case that $\sigma$, $\tau$ differ at only one edge $e$ or only one face $f$, as described in (2) and (3). Always select the same block to update the two chains. Once a block $V_j$ is selected, the block $V_j$ is updated sub-block by sub-block from the first sub-block $G_{j,1}$ to the last sub-block $G_{j,\frac{\ell}{2}}$. When updating the sub-block $G_{j,1}$, it is updated from the conditional distribution given the configurations in $G_{j-\ell,\frac{\ell}{2}}$ and $G_{j+\ell,1}$. For $2\leq i\leq \frac{\ell}{2}$, the sub-block $G_{j,i}$ is updated from the conditional distribution given the configurations $G_{j,i-1}$ and $G_{j+\ell,1}$. If in $\sigma$ and $\tau$, the configurations $G_{j,i-1}$ and $G_{j+\ell,1}$ are the same, then in the sub-block $G_{j,i}$, the two chains are updated together, i.e. after the update, in the sub-block $G_{j,i}$, the two chains have the same configuration.

 If a block is selected which contains the edge $e$ or the face $f$, then the two chains can be updated together, and the difference of $\sigma$ and $\tau$ can be eliminated. The difficulty occurs when
\begin{Alist}
\item $e$ is incident to exactly one vertex in the selected block; or
\item at least one vertex in $f$ is in the selected block and at least one vertex in $f$ is outside the selected block.
\end{Alist}

We consider Case A. first. Assume that a block $V_j$ is selected. We will update the sub-blocks starting from the sub-block that is incident to the edge $e$, and label the sub-block by $G_{j,1}$. We make the following claim
\begin{claim}\label{c53}
 Whatever the states of the edges connecting $G_{j,i-1}$ and $G_{j,i}$ and the edges connecting $G_{j,i}$ and $G_{j,i+1}$ are, there is always a strictly positive probability that on all the edges connecting two points of $G_{j,i}$, $\sigma$ and $\tilde{\sigma}$ have the same states. 
\end{claim}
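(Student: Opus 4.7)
The plan is to reduce the claim to exhibiting, for the two current exterior configurations $\xi_\sigma$ and $\xi_{\tilde\sigma}$ of the coupled chains on the edges leaving $G_{j,i}$, a single 1-2 configuration $\eta$ on the internal edges of $G_{j,i}$ that is compatible with both $\xi_\sigma$ and $\xi_{\tilde\sigma}$. Once such an $\eta$ is exhibited, strict positivity of $a,b,c>0$ forces $\eta$ to receive strictly positive probability under both conditional distributions $\pi(\,\cdot\mid\xi_\sigma)$ and $\pi(\,\cdot\mid\xi_{\tilde\sigma})$, and the coupling that ``samples the same $\eta$ whenever possible'' then assigns strictly positive probability to $\sigma$ and $\tilde\sigma$ agreeing on all edges with both endpoints in $G_{j,i}$.

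To produce such an $\eta$, I would first carry out a vertex-by-vertex degree analysis. Throughout the coupling, the chains $\sigma$ and $\tilde\sigma$ differ at most at the original single edge $e$ (Case A) or the original single face $f$ (Case B), so at every vertex $v \in G_{j,i}$ the outside degree $d^{\mathrm{out}}(v)$---the number of present incident edges of $v$ not having both endpoints in $G_{j,i}$---is either the same in $\sigma$ and $\tilde\sigma$ or differs by exactly $1$; the pathological case of a difference of $2$ never occurs, even in Case B, because at each vertex of the face $f$ exactly one of its two face-edges flips from absent to present and the other from present to absent. The 1-2 constraint at $v$ then produces a non-empty set of admissible internal degrees at $v$, of size two when the two outside degrees agree and are in $\{0,1\}$, and of size one otherwise. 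In the generic case $\xi_\sigma=\xi_{\tilde\sigma}$ (when $e$ or $f$ is far from $G_{j,i}$) every $\eta$ compatible with $\xi_\sigma$ works.

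When $\xi_\sigma\neq \xi_{\tilde\sigma}$, admissibility of the original boundary $b_{k,n}$ supplies a 1-2 configuration of $\Lambda_{k,n}$ agreeing with $\xi_\sigma$ off $G_{j,i}$, whose restriction to the internal edges of $G_{j,i}$ is compatible with $\xi_\sigma$. I would then modify this restriction near the at most one or two vertices of $G_{j,i}$ where $d^{\mathrm{out}}_\sigma$ and $d^{\mathrm{out}}_{\tilde\sigma}$ disagree, so as to pin the internal degree at each such vertex to the unique value admissible for both chains, using the two-degree flexibility present at the neighbouring vertices to absorb the resulting adjustments. Equivalently, via the measure-preserving bijection of Section~\ref{sc:ir}, this reduces to producing a dimer matching on the decorated sub-block corresponding to $G_{j,i}$ with prescribed boundary, which is standard.

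The main obstacle is this last local modification step: verifying that the forced internal-degree profile is always realizable on the $k\times 2$ sub-block $G_{j,i}$. I expect to dispatch it by a short case analysis based on the position of $e$ or $f$ relative to $G_{j,i}$, exploiting the two admissible internal degrees available at each vertex where the outside degrees agree; these furnish enough flexibility on the two-column sub-block to absorb the pinned constraints at the few exceptional vertices forced by the boundary difference between $\sigma$ and $\tilde\sigma$.
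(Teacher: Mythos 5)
Your reduction in the first paragraph is sound: it suffices to exhibit one interior configuration $\eta$ of $G_{j,i}$ receiving positive mass under both conditional laws, and strict positivity of $a,b,c$ plus a maximal coupling then finishes the argument. The problem is the premise on which your construction of $\eta$ rests. You assert that when $G_{j,i}$ is updated the two chains ``differ at most at the original single edge $e$ or the original single face $f$,'' so that the outside degrees $d^{\mathrm{out}}_{\sigma}(v)$ and $d^{\mathrm{out}}_{\tilde\sigma}(v)$ disagree at no more than one or two vertices and by at most $1$. This is false in the setting where the claim is actually invoked. The block is updated sequentially, and the conditional law used for $G_{j,i}$ ($i\ge 2$) depends on the freshly resampled $G_{j,i-1}$; if the coupling has not yet coalesced, the two chains may disagree on essentially all of the edges joining $G_{j,i-1}$ to $G_{j,i}$. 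That is precisely why the claim quantifies over \emph{all} states of the connecting edges, and why the recursive bound $\PP(\text{disagreement reaches }G_{j,i})\le(1-C_5(k))^i$ needs a coalescence probability that is uniform over arbitrary, arbitrarily different exterior states. Your local-patching argument --- start from a configuration compatible with $\xi_\sigma$ and repair it near ``the at most one or two'' exceptional vertices --- has no analogue when the exteriors disagree at $\Theta(k)$ vertices, so the main step of your proof does not go through in the generality required.

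The paper's route is structurally different and avoids this issue: it exhibits a single \emph{universal} assignment on all interior edges of the $2\times k$ sub-block except two special edges $e_1,e_2$, an assignment compatible with \emph{every} boundary configuration of the sub-block (essentially, each vertex with one outside edge carries exactly one present interior edge, so its total degree is $1$ or $2$ no matter what the outside does). Only the two interior edges whose endpoint meets \emph{two} boundary edges of the sub-block can have their state forced by the exterior, and there the forcing cannot differ between the chains because one of the two relevant outside edges is a boundary edge of $\Lambda_{k,n}$, fixed by $b_{k,n}$ and hence identical in $\sigma$ and $\tilde\sigma$. If you want to repair your argument, you need either such a boundary-independent interior configuration, or some other device that is uniform over all pairs of exterior states --- a degree-difference analysis keyed to the original single-edge/single-face discrepancy cannot supply it.
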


To see why Claim \ref{c53} is true, consider the sub-block in Figure \ref{fig:cld} bounded by $L_1$ and $L_2$. An interior edge of the block is an edge whose both endpoints are in the sub-block. It is not hard to see that the specific configuration on all the interior edges of the sub-block except for $e_1$ and $e_2$ as illustrated in Figure \ref{fig:cld} can be extended to a 1-2 model configuration with any boundary configurations of the sub-block. Therefore whatever the states of the edges crossing $L_1$ and $L_2$ are, there is always a strictly positive probability that on all the interior edges except $e_1$ and $e_2$  have the same states. The interior edges $e_1$ and $e_2$ are special since both of them has an endpoint incident to two boundary edges of the sub-block. If with probability 1, $e_1$ (resp. $e_2$) has different states in $\sigma$ and $\tilde{\sigma}$, then both boundary edges incident to $e_1$ (resp. $e_2$) are present in $\sigma$ and absent in $\tilde{\sigma}$; or both boundary edges incident to $e_1$ (resp.\ $e_2$) are absent in $\sigma$ but present in $\tilde{\sigma}$. But this is impossible since by assumption $\sigma$ and $\tilde{\sigma}$ have the same state on the boundary edge of $\Lambda_{k,n}$ which is also incident to $e_1$.

Let $C_5(k)>0$
 be a lower bound for the probability that all the edges in a sub-block have the same states in $\sigma$ and $\tilde{\sigma}$; where the lower bound is taken over all the possible states of edges connecting $G_{j,i-1}$ and $G_{j,i}$, and edges connecting $G_{j,i}$ and $G_{j,i+1}$. 
 
Now we compute the expected number of vertices in the block $V_j$ where the two updates disagree. For $1\leq i\leq\frac{\ell}{2}$, if there exists a difference of configurations between the two updates in the $i$th sub-block $G_{j,i}$, then there is difference between the two updates in each one of the 1st, 2nd, ..., and $(i-1)$th sub-block updated before the update of $G_{j,i}$. Hence we have
\begin{eqnarray*}
\PP(\#\ \mathrm{of\ vertices\ in\ }V_j\ \mathrm{such\ that\ the\ two\ updates\ disagree}\geq ik)\leq (1-C_5(k))^i.
\end{eqnarray*}
 Therefore the expected number of vertices in the block $V_j$ where the two updates disagree  is bounded by $C_6(k)$, where $C_6(k)>0$ is a constant depending only on $k$, but independent of $\ell$.
 
 Let $\rho(\sigma,\tau)$ be the number of edges with different states in $\sigma$ and $\tau$. First let us treat the case when $\sigma$ and $\tau$ differ at exactly one edge $e$. Then $\rho(\sigma,\tau)=1$. Let $(X_1,Y_1)$ be the pair of configurations obtained after one step of coupling. Since $\ell$ of the $n$-blocks will contain the edge $e$; and two of the blocks contain exactly one endpoint of $e$, we have
 \begin{eqnarray*}
 \mathbb{E}_{\sigma,\tau}\rho(X_1,Y_1)\leq 1-\frac{\ell}{2n}+\frac{2C_6(k)}{n}.
 \end{eqnarray*}
 If we choose $\ell=4C_6(k)+2$, then
 \begin{eqnarray*}
 \mathbb{E}_{\sigma,\tau}\rho(X_1,Y_1)\leq 1-\frac{1}{n}\leq e^{-\frac{1}{n}},
 \end{eqnarray*}
 for $\sigma,\tau$ with $\rho(\sigma,\tau)=1$.
 
 Now let us treat the case when $\sigma$ and $\tau$ differ at exactly one face, as described in (3) of Section \ref{mch}; see also Figure \ref{fig:p3}. Now $\rho(\sigma,\tau)=6$. There are at least $\ell-6$ blocks containing every vertex of the face, and at most 6 blocks containing some vertices of the face, but not all the vertices of the face. Using the same arguments as above,  we have
 \begin{eqnarray*}
  \mathbb{E}_{\sigma,\tau}\rho(X_1,Y_1)\leq 1-\frac{\ell-6}{2n}+\frac{6C_6(k)}{n}.
\end{eqnarray*} 
Therefore by choosing $\ell$ sufficiently large, we obtain
 \begin{eqnarray*}
 \mathbb{E}_{\sigma,\tau}\rho(X_1,Y_1)\leq e^{-\frac{1}{n}}\rho(\sigma,\tau).
 \end{eqnarray*}

Recall the following theorem

\begin{theorem}\label{tm53}(Theorem 13.1 of \cite{LPW}; see also \cite{Mch}) Let $\Omega$ be a metric space with metric $\rho$, and let $P$ be the transition matrix of a Markov chain with state space $\Omega$. Suppose there exists a constant $\theta<1$ such that for each $x,y\in\Omega$, there exists a coupling $(X_1,Y_1)$ of $P(x,\cdot)$ and $P(y,\cdot)$ satisfying
\begin{eqnarray}
\mathbb{E}_{x,y}(\rho(X_1,Y_1))\leq \theta \rho(x,y).\label{ctr}
\end{eqnarray}
If $\lambda\neq 1$ is an eigenvalue of $P$, then $|\lambda|\leq \theta$. In particular, the absolute spectral gap satisfies
\begin{eqnarray*}
\gamma_*\geq 1-\theta.
\end{eqnarray*}
\end{theorem}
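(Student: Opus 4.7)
The plan is to argue that if $\lambda\neq 1$ is an eigenvalue of $P$ with eigenfunction $f$, then the Lipschitz constant of $f$ with respect to $\rho$ forces $|\lambda|\leq\theta$. The key is that applying $P$ to $f$ can be interpreted through the contracting coupling, and the coupling hypothesis (\ref{ctr}) then bounds the amount by which the $P$-image of $f$ can spread out. Since the state space is finite, the Lipschitz constant is attained, so no compactness issues arise.

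First I would define
\begin{eqnarray*}
\mathrm{Lip}(f):=\max_{x\neq y}\frac{|f(x)-f(y)|}{\rho(x,y)}.
\end{eqnarray*}
Because $\lambda\neq 1$ and $P$ has $\mathbf{1}$ as the eigenfunction for eigenvalue $1$, any eigenfunction $f$ with $Pf=\lambda f$ is non-constant, so $\mathrm{Lip}(f)>0$; and the maximum is attained, say at a pair $(x_0,y_0)$ with $x_0\neq y_0$. The assumption $\rho(x,y)\geq \mathbf{1}\{x\neq y\}$ guarantees that $\rho(x_0,y_0)\geq 1$, so this ratio is well-defined and finite.

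Next I would use the coupling. Let $(X_1,Y_1)$ be the coupling of $P(x_0,\cdot)$ and $P(y_0,\cdot)$ satisfying (\ref{ctr}). Since $(Pf)(x_0)=\mathbb{E}_{x_0,y_0}[f(X_1)]$ and $(Pf)(y_0)=\mathbb{E}_{x_0,y_0}[f(Y_1)]$, I can write
\begin{eqnarray*}
|\lambda|\,|f(x_0)-f(y_0)| &=& |(Pf)(x_0)-(Pf)(y_0)| \\
&=& \bigl|\mathbb{E}_{x_0,y_0}[f(X_1)-f(Y_1)]\bigr| \\
&\leq& \mathbb{E}_{x_0,y_0}|f(X_1)-f(Y_1)| \\
&\leq& \mathrm{Lip}(f)\cdot \mathbb{E}_{x_0,y_0}\rho(X_1,Y_1) \\
&\leq& \theta\,\mathrm{Lip}(f)\cdot \rho(x_0,y_0) \\
&=& \theta\,|f(x_0)-f(y_0)|,
\end{eqnarray*}
where the third line uses the pointwise bound $|f(u)-f(v)|\leq \mathrm{Lip}(f)\cdot\rho(u,v)$ (with the convention that this is $0$ when $u=v$, which is consistent since $\rho(u,u)=0$ is allowed; if one is uneasy about $\rho(u,u)$, one can replace $\rho$ by $\rho'(u,v)=\rho(u,v)\mathbf{1}\{u\neq v\}$ in this step). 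Dividing by $|f(x_0)-f(y_0)|>0$ yields $|\lambda|\leq\theta$. Since this holds for every eigenvalue $\lambda\neq 1$, we obtain $\lambda_*\leq\theta$ and hence $\gamma_*=1-\lambda_*\geq 1-\theta$.

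The main (minor) obstacle is just being careful about the definition of $\mathrm{Lip}(f)$ and the inequality $|f(u)-f(v)|\leq \mathrm{Lip}(f)\rho(u,v)$ in the case $u=v$ where $\rho(u,v)$ might be zero; the hypothesis $\rho(x,y)\geq \mathbf{1}\{x\neq y\}$ is precisely what makes this harmless, since both sides are zero when $u=v$, and the ratio is only maximized over distinct pairs. Finiteness of $\Omega$ ensures the Lipschitz maximum is attained and $f$ takes only finitely many values, so no measurability or integrability concerns arise.
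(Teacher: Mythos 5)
Your argument is correct and is essentially the standard proof of Theorem 13.1 in Levin--Peres--Wilmer, which the paper cites without reproducing: the contraction of the Lipschitz seminorm under $P$, evaluated at a maximizing pair for a (possibly complex) eigenfunction of $\lambda\neq 1$, forces $|\lambda|\leq\theta$. The only cosmetic point is that you do not need the hypothesis $\rho(x,y)\geq\mathbf{1}\{x\neq y\}$ here; positivity of $\rho$ on distinct pairs and finiteness of $\Omega$ already make the Lipschitz maximum attained and strictly positive for a non-constant eigenfunction.
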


By (\ref{ctr}), and Theorem \ref{tm53}, we have
\begin{eqnarray}
\gamma_B\geq \gamma_{B,*}\geq 1-e^{\frac{1}{-n}}\geq \frac{1}{n},\label{s1}
\end{eqnarray}
where $\gamma_B$ is the spectral gap for the block dynamics, and $\gamma_{B,*}$ is the absolute spectral gap for the block dynamics.
Since
\begin{eqnarray}
\log\left(\frac{1}{\epsilon\min_{\omega\in \Omega_{\Lambda_{k,n},b_{k,n}}}\pi(\omega)}\right)\leq C(k,\epsilon)n,\label{s2}
\end{eqnarray}
By  (\ref{tm}), (\ref{s1}), (\ref{s2}), we have
\begin{eqnarray*}
t_{mix}\leq C_7(k)n^2,
\end{eqnarray*}
for the Markov chain described by (\ref{tmt}). Moreover, by Theorem \ref{tm44}, we have
\begin{eqnarray*}
t_{mix}\geq B(k) n,
\end{eqnarray*}
where $B(k)$ is a constant depending on $k$ but is independent of $n$. This completes the proof of Theorem \ref{m1}.

We can also consider the mixing time of the block dynamics with transition matrix given by (\ref{bd}). The following theorem holds.

\begin{theorem}\label{tm54}Let $\tilde{P}$ defined by (\ref{bd}) be the transition matrix for the block dynamics of 1-2 model configurations on $\Omega_{\Lambda_{k,n},b_{k,n}}$, then
\begin{eqnarray*}
B'(k) n \leq \tilde{t}_{mix}\leq C'(k) n\log n,
\end{eqnarray*}
where $B'(k)>0$, $C'(k)>0$ are positive constants depending only on $k$ and independent of $n$, and $\tilde{t}_{mix}$ is defined by (\ref{mtm}) with respect to $\tilde{P}$.
\end{theorem}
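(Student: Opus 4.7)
The upper bound comes from path coupling combined with Theorem \ref{tm45}; the lower bound comes from the diameter bound Theorem \ref{tm44}. Fix $\ell$ as a sufficiently large constant depending only on $k$, exactly as in the proof of Theorem \ref{m1}. That proof supplies a one-step coupling of $\tilde{P}$ satisfying
\begin{equation*}
\mathbb{E}_{\sigma,\tau}[\rho(X_1,Y_1)] \leq e^{-1/n}\rho(\sigma,\tau)
\end{equation*}
whenever $\sigma$ and $\tau$ differ at one edge (Case (2)) or at one face via the Case (3) pattern, with $\rho$ the Hamming distance on edge states. By Proposition \ref{ir} together with Lemma \ref{sm}, the Case (2)/Case (3) adjacency graph is connected on $\Omega_{\Lambda_{k,n},b_{k,n}}$, so the Bubley--Dyer path coupling theorem (Theorem 14.7 of \cite{LPW}) extends this to a coupling for arbitrary pairs with the same contraction rate in the induced shortest-path metric $\rho^{*}$, whose diameter is at most $C_0(k)n$ (bounded via the cycle-elimination procedure from the proof of Proposition \ref{ir}). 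Theorem \ref{tm45} with $\alpha = 1/n$ and $\epsilon = 1/4$ then yields
\begin{equation*}
\tilde{t}_{mix} \leq \bigl\lceil n\bigl(\log(C_0(k)n) + \log 4\bigr)\bigr\rceil \leq C'(k)\,n\log n.
\end{equation*}

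For the lower bound, I would apply Theorem \ref{tm44} to the support graph of $\tilde{P}$. A single step of $\tilde{P}$ updates only edges inside the chosen block $V_j$, and $|E_j| \leq C_2(k)\ell$ is bounded by a constant depending only on $k$; hence the Hamming distance changes by at most a constant per step. Any two configurations in $\Omega_{\Lambda_{k,n},b_{k,n}}$ at Hamming distance $\Omega(n)$---obtained for instance by swapping periodic interior patches along the long direction of $\Lambda_{k,n}$ while respecting the boundary condition---are therefore separated by $\Omega(n)$ steps in the support graph of $\tilde{P}$, and Theorem \ref{tm44} gives $\tilde{t}_{mix} \geq B'(k)\,n$.

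The one nontrivial technical point is the path coupling step itself, namely lifting the single-step contraction on Case (2) and Case (3) neighbors to a global contraction on a metric that controls total variation distance. This parallels the implicit path coupling reasoning behind equation \eqref{s1} in the proof of Theorem \ref{m1}, and is powered by the same connectedness input supplied by Proposition \ref{ir}; once it is in place, the upper and lower bounds are direct applications of Theorems \ref{tm45} and \ref{tm44}.
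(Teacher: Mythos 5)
Your proof follows essentially the same route as the paper, which simply cites Theorem \ref{tm45} for the upper bound and Theorem \ref{tm44} for the lower bound (note the paper's proof contains a typo, swapping the words ``upper'' and ``lower''); you have just filled in the details, correctly noting that the contraction established for adjacent pairs in the proof of Theorem \ref{m1} must be lifted to all pairs via path coupling before Theorem \ref{tm45} applies, and that the diameter is $O(n)$ so $\log D_\rho(\Omega)=O(\log n)$ with $\alpha=1/n$. The only point you (like the paper) leave implicit is the existence of two configurations at Hamming distance $\Omega(n)$ for the lower bound, but this is at the same level of rigor as the original.
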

\begin{proof}The upper bound $B'(k)n$ is obtained following Theorem \ref{tm45}, and the lower bound is obtained following Theorem \ref{tm44}.
\end{proof}

\section{Mixing Time for the 1-2 Model on an $n\times n$ Box}\label{sc:wm}

In this section, we prove Theorem \ref{m2}. We first prove the spatial mixing of the Gibbs measures for the uniform 1-2 model in Section \ref{ssc:sm}, then we introduce the block dynamics in Section \ref{ssc:bd}. The spatial mixing of the Gibbs measures implies the fast mixing of the block dynamics; the comparison theorem and the fast mixing of the block dynamics imply the fast mixing of single site dynamics.

\subsection{Spatial mixing of Gibbs measures for the 1-2 model}\label{ssc:sm}

Let $\Lambda=(V_{\Lambda},E_{\Lambda})$ be a finite subgraph of the hexagonal lattice $\HH$. A 1-2 model configuration on $\Lambda$ can be considered as a spin system $\{\sigma_e\}_{e\in E_{\Lambda}}$,
\begin{enumerate}
\item for each $e\in E_{\Lambda}$, $\sigma_e\in\{\pm1\}$;
\item let $e_1,e_2,e_3$ be the three incident edges of a vertex $v$, then
\begin{eqnarray*}
\sigma_{e_1}+\sigma_{e_2}+\sigma_{e_3}\in\{\pm1\}.
\end{eqnarray*}
\end{enumerate}
Indeed, an edge $e$ is present in a 1-2 model configuration if and only if $\sigma_e=1$. Condition (2) ensures that each vertex has one or two incident present edges in the 1-2 model configuration.

A Kagome lattice ([3,6,3,6] lattice) can be constructed from the hexagonal lattice $\HH$ as follows. Place a vertex of the Kagome lattice at the center of each edge of $\HH$; two vertices of the Kagome lattice are adjacent, or joined by an edge of the Kagome lattice, if and only if they are at the centers of two edges of $\HH$ sharing a vertex. See Figure \ref{fig:kag}. 
We can consider the 1-2 model on $\HH$ as a spin system with spins located on vertices of the Kagome lattice with nearest neighbor interactions. In particular the vertex set of a Kagome lattice is in one-to-one correspondence with the edge set of the hexagonal lattice.
\begin{figure}[htbp]
\includegraphics{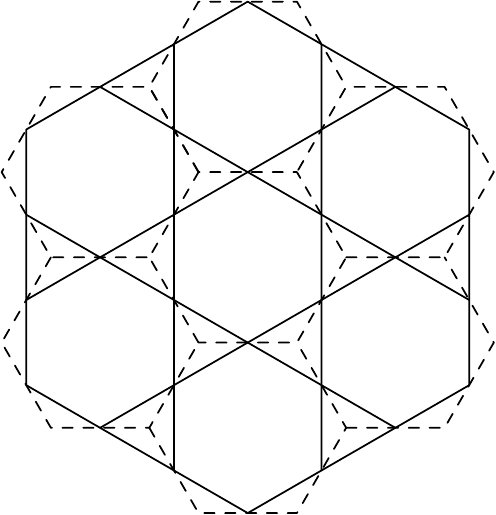}
\caption{The Kagome lattice ([3,6,3,6] lattice) and the hexagonal lattice. The Kagome lattice is represented by black lines, while the hexagonal lattice is represented by dashed lines.}
\label{fig:kag}
\end{figure}

Let $e_1,e_2,e_3$ be three incident edges of a vertex $v$ of $\HH$. The potential is given by
\begin{eqnarray}
U_{e_1,e_2,e_3}(\sigma_{e_1},\sigma_{e_2},\sigma_{e_3})=-\log\left(1+A\sigma_{e_1}\sigma_{e_2}+B\sigma_{e_1}\sigma_{e_3}+C\sigma_{e_2}\sigma_{e_3}\right),\label{pu}
\end{eqnarray}
where 
\begin{eqnarray*}
A=\frac{a-b-c}{a+b+c},\qquad B=\frac{b-a-c}{a+b+c},\qquad C=\frac{c-a-b}{a+b+c}
\end{eqnarray*}
Such a correspondence between 1-2 model on $\HH$ with local weights $a,b,c$ and spin system was introduced in \cite{GL1}; see also \cite{GL2}.

The Hamiltonian for 1-2 model configurations on $\Lambda$, with boundary condition $\tau$, is defined by 
\begin{eqnarray*}
H_{\Lambda}^{\tau}(\sigma)=\sum_{v\in V_{\Lambda}}U_{Dv}(\sigma_{e_1^v},\sigma_{e_2^v},\sigma_{e_3^v}),
\end{eqnarray*}
where $Dv=\{e_1^v,e_2^v,e_3^v\}$ is the set consisting of three incident edges of $v$ in the hexagonal lattice $\HH$, and the spins on edges joining one vertex in $\Lambda$ and one vertex outside $\Lambda$ have states given by the boundary condition $\tau$.

A finite-volume Gibbs measure on $\Lambda$ with boundary condition $\tau$ is the probability distribution on $\{0,1\}^{E_n}$ defined by
\begin{eqnarray}
\mu_{\Lambda}^{\tau}(\sigma)=\frac{e^{-H_{\Lambda}^{\tau}(\sigma)}}{Z},\label{gb12}
\end{eqnarray}
where $Z$ is a normalizing constant called the partition function,  which depends on the boundary condition $\tau$. It is proved in \cite{GL1} that the measure defined by (\ref{gb12}) is exactly the probability measure on 1-2 model configurations under which the probability of a configuration is proportional to the product of weights of local configurations.

Let $\Delta$ be a finite subgraph of $\Lambda$. Let $\mu_{\Lambda, \Delta}^{\tau}$ be the marginal distribution of $\mu_{\Lambda}^{\tau}$ restricted on events depending only on states of edges in $\Delta$.

\begin{definition}Let $\Lambda$ be a finite subgraph of $\HH$, and let $\tau,\tau'$ be two admissible boundary conditions for 1-2 model configurations on $\Lambda$ such that $\tau$ and $\tau'$ differ at a finite set of edges $E_{\tau\tau'}$ satisfying $|E_{\tau\tau'}|\leq 6$. Gibbs measures for 1-2 model configurations on $\Lambda$ are strong mixing if for any $\Delta\subset \Lambda$,
\begin{eqnarray}
\max^*_{\tau,\tau'}\|\mu_{\Lambda,\Delta}^{\tau}-\mu_{\Lambda,\Delta}^{\tau'}\|_{TV}\leq C e^{-\gamma \mathrm{dist}(\Delta,E_{\tau\tau'})},\label{smc}
\end{eqnarray}
where $C,\gamma>0$ are constants independent of $\Delta$, $\|\cdot\|_{TV}$ is the total variation distance of two probability measures, and the superscript $*$ denotes that we maximize over those pairs $\tau,\tau'$ which agree off $E_{\tau\tau'}$, i.e. which satisfy $\tau(e)=\tau'(e)$, for $e\in \partial\Lambda\setminus E_{\tau\tau'}$.
\end{definition}

The strong mixing condition for the 1-2 model is defined analogously to the strong mixing condition for a general (unconstrained) spin system; see also \cite{mos94,MF99}.

\begin{definition}\label{d62}Let $n\in\NN$ and $\epsilon>0$. Let $\Lambda_n$ be the $n\times n$ box of the hexagonal lattice $\HH$ centered at the origin. Let $B(n)$ be the set consisting of all the admissible boundary conditions for $\Lambda_n$. Let $F(n,\epsilon)$ be the condition that
\begin{eqnarray*}
&&\max_{\tau,\tau'\in B(3n)}\|\mu_{\Lambda_{3n},\Lambda_n}^{\tau}-\mu_{\Lambda_{3n},\Lambda_n}^{\tau'}\|_{TV}+2\max^*_{\tau,\tau'\in B(n)}\|\mu_{\Lambda_n,M_1(\Lambda_n)}^{\tau}-\mu_{\Lambda_n, M_1(\Lambda_n)}^{\tau'}\|_{TV}\\&&+2\max^{**}_{\tau,\tau'\in B(n)}\|\mu_{\Lambda_n,M_2(\Lambda_n)}^{\tau}-\mu_{\Lambda_n, M_2(\Lambda_n)}^{\tau'}\|_{TV}<\epsilon,
\end{eqnarray*}
where $*$ (resp.\ $**$) denotes that the maximum is taken over those pairs $\tau,\tau'$ with $\tau=\tau'$ on the NW and SE boundaries (resp.\ NE and SW) of $\Lambda_n$, and $M_1(\Lambda_n)$ (resp.\ $M_2(\Lambda_n)$) is the central line of $\Lambda_n$ parallel the the NE (resp.\ NW) boundary.
\end{definition}

\begin{figure}[htbp]
\includegraphics[width=0.5\textwidth]{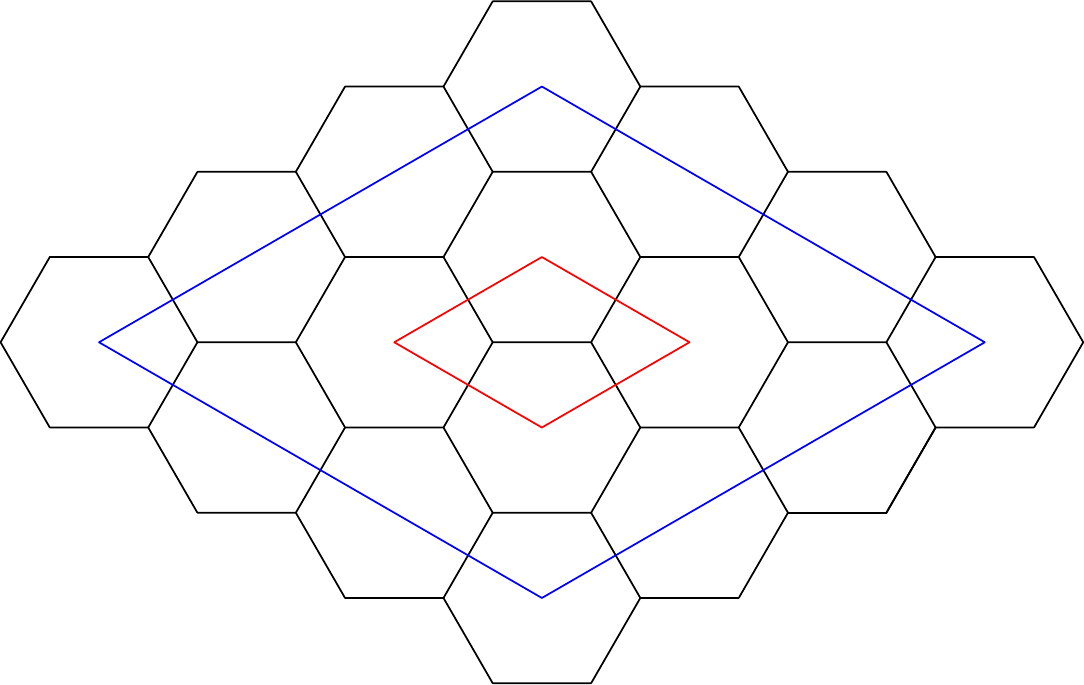}
\caption{A $\Lambda_n$ box in the center of a $\Lambda_{3n}$ box when $n=1$. The $\Lambda_1$ box is bounded by the red line; and the $\Lambda_{3}$ box is bounded by the red line. The boundary condition of $\Lambda_{3}$ box is given by the configurations on all the vertices outside $\Lambda_3$.}
\label{fig:lb13}
\end{figure}

\begin{figure}[htbp]
\includegraphics[width=0.5\textwidth]{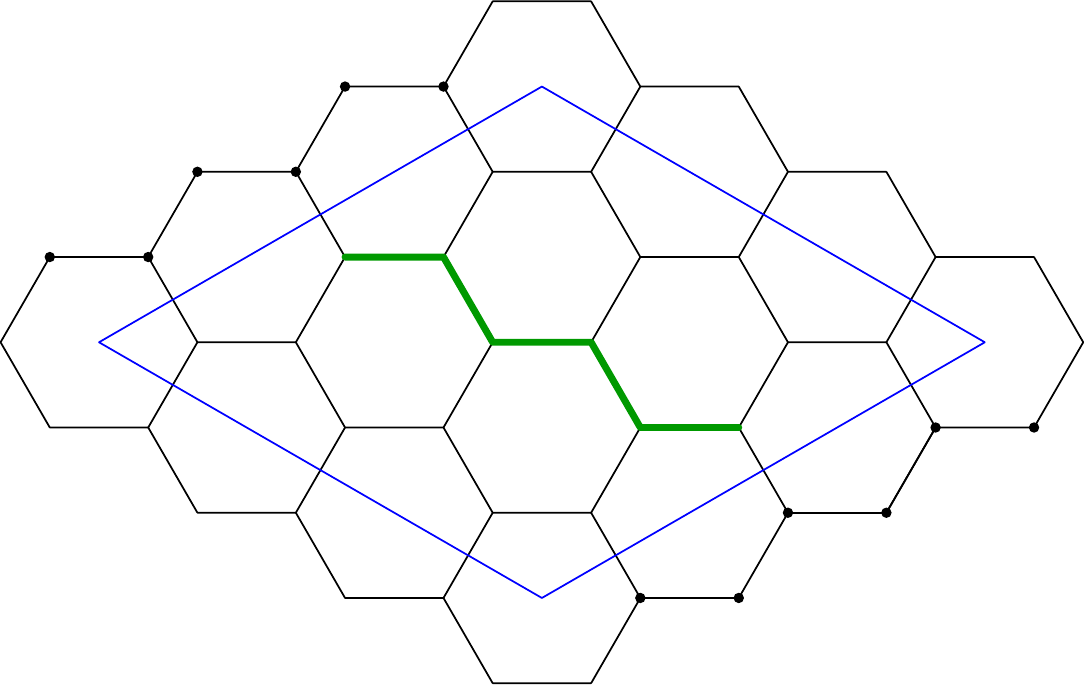}
\caption{The green line represents the central line $M_1(\Lambda_3)$ parallel to the northeastern boundary. The labeled vertices are vertices with the same configurations for $\tau$ and $\tau'$ when computing $\max^{*}_{\tau,\tau'}$.}
\label{fig:cl3}
\end{figure}

\begin{theorem}\label{t34}Let $G$ be a connected component of the graph obtained from the square grid by joining each vertex $(p,q)\in \ZZ^2$ with 16 vertices $(p,q\pm 2),(p\pm1,q\pm 2),(p\pm 2,q\pm 2),(p\pm 2,q),(p\pm 2,q\pm 1)$. Let $\nu_k$ be the number of $k$-step self-avoiding walks on $G$ starting from $(0,0)$.
Let $\nu$ be the connective constant for $G$ defined by $\nu=\lim_{k\rightarrow\infty}\nu_k^{\frac{1}{k}}$. If there exists $N$, such that $F(N,\nu^{-1})$ holds, then the strong mixing condition holds for Gibbs measures on sufficiently large box $\Lambda_n$ with constants $C,\gamma$ independent of $n$.
\end{theorem}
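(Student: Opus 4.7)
The plan is to follow the block-renormalization strategy of Martinelli--Olivieri: show that the single-scale finite-volume condition $F(N,\nu^{-1})$ can be iterated across a chain of overlapping $\Lambda_{3N}$-blocks to produce exponential decay at arbitrary distance. The specific combination of the three terms in $F(N,\nu^{-1})$ is tailored to handle the three ways a small $\Lambda_N$ can be positioned inside a larger $\Lambda_{3N}$--centered, or straddling one of the two diagonal central lines $M_1,M_2$--which together cover every geometric situation that arises in the iteration.

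First I would set up a one-step recursion. Given an admissible boundary discrepancy $E_{\tau\tau'}$ with $|E_{\tau\tau'}|\leq 6$ and a target set $\Delta$, write $\mu^{\tau}_{\Lambda_n,\Delta}-\mu^{\tau'}_{\Lambda_n,\Delta}$ as a telescoping sum over a sequence of $\Lambda_{3N}$-blocks linking $E_{\tau\tau'}$ to $\Delta$. At each step one of the three terms in $F(N,\nu^{-1})$ applies, depending on the relative geometry of the current block and the propagating disturbance; after the step the effective disturbance is pushed one block further inward and the total variation is multiplied by a factor at most $\nu^{-1}$. Crucially I would verify that every intermediate boundary condition produced by the conditioning is admissible for the 1-2 model, and that the support of the induced disturbance continues to satisfy $|E_{\tau\tau'}|\leq 6$, which is what the hexagonal-face structure of Case (3) in Section \ref{mch} and Lemma \ref{sm} guarantees.

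Iterating this recursion, I would unroll the telescope into a sum indexed by chains of blocks. Each chain can be identified with a self-avoiding walk on the coarse-grained graph $G$ of the statement: two successive blocks must overlap in one of the 16 admissible relative positions (giving the 16 extra edges added to the square grid in the definition of $G$), and the walks are self-avoiding because blocks cannot be revisited. A chain of length $k$ contributes at most $\nu^{-k}$ and there are at most $\nu_k$ such chains. Since $\nu=\lim\nu_k^{1/k}$, standard subexponential control of $\nu_k/\nu^k$ together with the strict slack provided by $F(N,\nu^{-1})$ makes the series $\sum_{k\geq c\,\mathrm{dist}(\Delta,E_{\tau\tau'})/N}\nu_k\nu^{-k}$ decay exponentially in $\mathrm{dist}(\Delta,E_{\tau\tau'})$, with constants $C,\gamma$ depending only on $N$ and $\nu$. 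This gives (\ref{smc}) with constants independent of $n$.

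The main obstacle is the interplay between the global admissibility constraint of the 1-2 model and the propagation of disturbances: in an unconstrained spin system a local resampling can always remove a single-site discrepancy, but here a disturbance supported on the six edges of a face cannot be shrunk without violating the 1-2 constraint at neighboring vertices. Tracking the invariance that at every step of the iteration the effective disturbance remains supported on at most one edge or one face--so that $F(N,\nu^{-1})$ remains applicable--is the essential technical step, and is precisely what justifies the three-term structure of Definition \ref{d62} (one term for each irreducible geometric mode in which a six-edge face disturbance can cross a block boundary).
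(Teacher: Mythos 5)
Your final counting step (a sum over self-avoiding walks on $G$ weighted by powers of a quantity below $\nu^{-1}$) matches the paper's conclusion, but the mechanism you propose for generating that sum does not work, and it is not the paper's mechanism. You set up a telescoping recursion over overlapping $\Lambda_{3N}$-blocks and rely on the invariant that ``at every step of the iteration the effective disturbance remains supported on at most one edge or one face,'' so that $F(N,\nu^{-1})$ can be reapplied. That invariant fails: once you condition the two measures on a resampled block, their marginals can disagree on the entire block, so the induced discrepancy for the next step has support of size $\Theta(N)$ rather than at most $6$, and nothing in the 1-2 constraint restores it. Relatedly, total variation distances do not compose multiplicatively under such conditioning, so the claim that each step ``multiplies the total variation by a factor at most $\nu^{-1}$'' is unjustified. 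You have also misread the three-term structure of Definition \ref{d62}: the terms do not correspond to three positions of $\Lambda_N$ inside $\Lambda_{3N}$; the first term controls a \emph{room} update (influence of the boundary of $\Lambda_{3\ell}$ on the central $\Lambda_\ell$), while the second and third terms (each with a factor $2$) control the four \emph{corridor} updates along lines parallel to $M_1$ and $M_2$.

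The paper instead runs a van den Berg--style disagreement-percolation coupling. It tiles $\Lambda_n$ with $\ell\times\ell$ rooms joined by corridors, samples a coupling $(\alpha,\alpha')$ of $\mu^{\tau}_{\Lambda_n}$ and $\mu^{\tau'}_{\Lambda_n}$ room by room starting near the discrepancy set, and declares a room ``bad'' when the coupling fails to agree on the room or on one of its four incident corridors; the probability of this event is bounded by exactly the three-term sum $p_b$ appearing in $F(\ell,\cdot)$. The hypothesis $|E_{\tau\tau'}|\leq 6$ serves only to localize the source of disagreement and is never reapplied inside the iteration. Disagreement at $\Delta$ then forces a $**$-path of bad rooms from $E_{\tau\tau'}$ to $\Delta$; such paths are self-avoiding walks on $G$, and since each room is bad with probability below $\nu^{-1}$, the union bound over walks of length at least $C_1\,\mathrm{dist}(\Delta,E_{\tau\tau'})$ converges and decays geometrically, giving (\ref{smc}) with constants independent of $n$. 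The essential point your proposal misses is that the smallness is probabilistic --- a per-room coupling-failure event --- rather than a deterministic contraction of a propagating six-edge disturbance; without that switch the recursion cannot be closed.
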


\begin{proof}The theorem can be proved using the same technique as in the proof of Proposition 1(a) of \cite{Jvdb}. The major difference between the our setting and the setting in \cite{Jvdb} is that our potential function defined by (\ref{pu}) may take the value $\infty$, due to the constraint that one or two incident edges are present at each vertex; and we are working on a 2D Kagome lattice instead of a square grid. However, the techniques used in \cite{Jvdb} work in our setting as well. We briefly sketch the proof here. 

Let $\Lambda_n$ be an $n\times n$ box of the hexagonal lattice. Assume that $n=(2k+1)\ell$; where $k,\ell>0$ are positive integers. We first divide the $n\times n$ box into $k^2$ rooms, each of which has size $\ell\times \ell$, as well as length $\ell$ corridors joining different rooms, or joining rooms with the boundary $\partial \Lambda_n$; see Figure \ref{fig:rc}. 
\begin{figure}[htbp]
\includegraphics[width=0.8\textwidth]{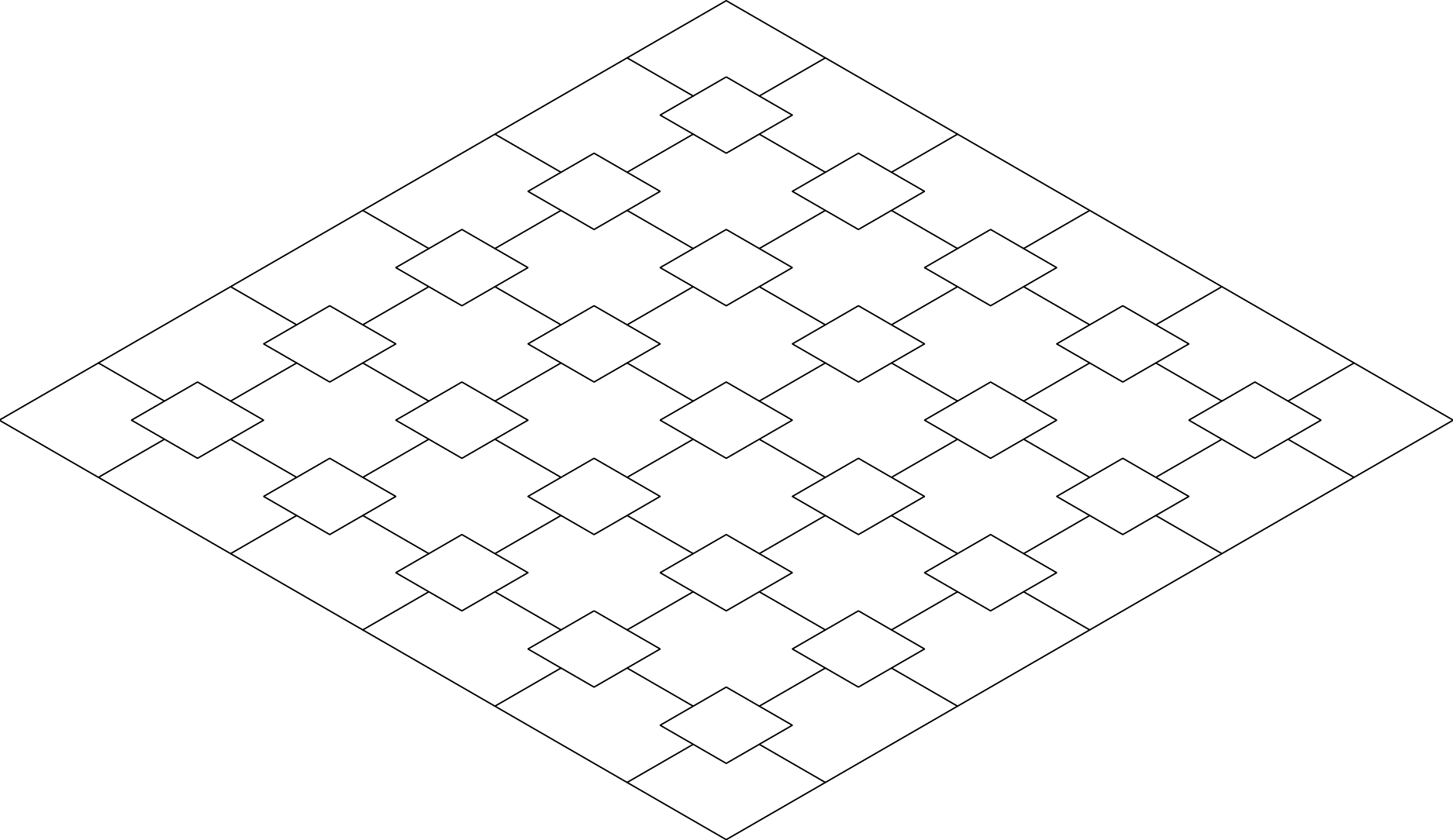}
   \caption{Rooms and Corridors. The largest rhombus represents the $n\times n$ box of $\Lambda_n$ of $\HH$; each small rhombus inside $\Lambda_n$ represents a $\ell\times \ell$ room; and corridors are represented by the line segments joining different rooms or joining a room to the boundary $\partial \Lambda_n$.}
   \label{fig:rc}
\end{figure}

For two boundary conditions $\tau$ and $\tau'$ different in at most 6 edges, we shall construct a coupling $(\alpha,\alpha')$, such that both $\alpha$ and $\alpha'$ are random 1-2 model configurations on $\Lambda_n$; $\alpha$ has the distribution $\mu_{\Lambda_n}^{\tau}$ and $\alpha'$ has the distribution $\mu_{\Lambda_n}^{\tau'}$, where $\mu_{\Lambda_n}^{\tau}$ (resp.\ $\mu_{\Lambda_n}^{\tau'}$) is the probability measure for 1-2 model configurations on $\Lambda_n$ conditional on the boundary condition $\tau$ (resp.\ $\tau'$), such that the probability of a configuration is proportional to the products of local weights at each vertex; and the local weights are given by $a,b,c>0$. Each time we update the configurations either in a room or in a corridor. Index all the rooms by positives integers $1,2,\ldots,k^2$. Two rooms are adjacent if they are joined by a corridor. Two rooms are *-adjacent if they are joined by a path consisting of two non-parallel corridors and a room in between. We first find the rooms with least distance to the edges where $\tau$ and $\tau'$ disagree, and then identify the one with least index among these rooms; denote the room identified by $R_1$. Sample $\alpha$ and $\alpha'$ in $R_1$ according to the distribution $\mu_{\Lambda_n}^{\tau}$ and $\mu_{\Lambda_n}^{\tau'}$, respectively. If $\alpha\neq\alpha'$ in $R_1$, mark the room as ``bad''. If $\alpha=\alpha'$ in this room, sample $\alpha$ and $\alpha'$ in the four corridors incident to $R_1$ conditional on $\tau\cup \alpha(R_1)$ and $\tau'\cup \alpha'(R_1)$, respectively. If $\alpha=\alpha'$ in all the four corridors, declare $R_1$ to be ``good''; otherwise declare $R_1$ to be ``bad''. 

 A room is called blank if it hasn't been declared to be ``good'' or ``bad''. We continue to update $\alpha$ and $\alpha'$ on blank rooms until there are no blank rooms *-adjacent to bad rooms. Otherwise we update $\alpha$ and $\alpha'$ in the room with least index and *-adjacent to a bad room by similar process as above, conditional on $\tau\cup\alpha_{s}$ and $\tau'\cup\alpha'_s$ respectively, where $\alpha_s$ and $\alpha_s'$ denote the configurations updated already up to now in $\alpha$ and $\alpha'$.   We then declare the room to be either ``good'' or ``bad'' according to the same criterion as above. 
 
 When there are no blank rooms *-adjacent to bad rooms, we update $\alpha$ and $\alpha'$ together in the remaining blank rooms according to the same distribution; in particular, we always assign $\alpha=\alpha'$ in all the remaining blank rooms and their incident corridors that haven't been explored yet. Finally we update $\alpha$ and $\alpha'$ on all the connected components of $\Lambda_n$ that haven't been explored yet conditional on $\tau\cup \alpha_s$ and $\tau'\cup\alpha'_s$, respectively.
 
 Let $\Delta$ be a finite subset of $\Lambda_n$. Let $\mu_{\Lambda_n,\Delta}^{\tau}$ (resp.\ $\mu_{\Lambda_n,\Delta}^{\tau'}$) be the probability measure of 1-2 model configurations on $\Delta$ conditional on the boundary condition $\tau$ (resp.\ $\tau'$) on $\partial \Lambda_n$. Then
 \begin{eqnarray*}
 \|\mu_{\Lambda_n,\Delta}^{\tau}-\mu_{\Lambda_n,\Delta}^{\tau'}\|_{TV}\leq \mathrm{Pr}(\alpha(\Delta)\neq \alpha'(\Delta)).
\end{eqnarray*}
where $\mathrm{Pr}$ denotes the probability of an event. 

A $*$-path is a sequence of rooms $r_1,r_2,\ldots,r_j$ such that for $1\leq i\leq j-1$, $r_i$ and $r_{i+1}$ are $*$-adjacent. A $**$-path is a sequence of rooms $s_1,\ldots,s_t$ such that for $1\leq i\leq t-1$, $s_i$ and $s_{i+1}$ are not $*$-adjacent but both are $*$-adjacent to a common room $u_i$.
By the coupling process above, if $\alpha(\Delta)\neq \alpha'(\Delta)$, then there exists a $*$-path consisting of bad rooms joining the edges where $\tau\neq \tau'$ to $\Delta$. We infer that there exists a $**$-path consisting of bad rooms joining the edges where $\tau\neq \tau'$ to $\Delta$.

The coupling process shows that the probability that a room is declare bad is at most
\begin{eqnarray*}
p_{b}&=&\max_{\tau,\tau'\in B(3\ell)}\|\mu_{\Lambda_{3\ell},\Lambda_{\ell}}^{\tau}-\mu_{\Lambda_{3\ell},\Lambda_{\ell}}^{\tau'}\|_{TV}+2\max^*_{\tau,\tau'\in B(\ell)}\|\mu_{\Lambda_n,M_1(\Lambda_{\ell})}^{\tau}-\mu_{\Lambda_{\ell}, M_1(\Lambda_n)}^{\tau'}\|_{TV}\\&&+2\max^{**}_{\tau,\tau'\in B(\ell)}\|\mu_{\Lambda_{\ell},M_2(\Lambda_{\ell})}^{\tau}-\mu_{\Lambda_{\ell}, M_2(\Lambda_{\ell})}^{\tau'}\|_{TV}
\end{eqnarray*}
where $\max^{*}$ and $\max^{**}$, $B(\ell)$ are defined as in Definition \ref{d62}. Then we have
\begin{eqnarray*}
&&\max_{\tau,\tau'\in B(n)} \|\mu_{\Lambda_n,\Delta}^{\tau}-\mu_{\Lambda_n,\Delta}^{\tau'}\|_{TV}\\
&\leq &\mathrm{Pr}(\mathrm{there\ exists\ a\ **-path\ from\ \tau\neq\tau'\ to \Delta})\\
&\leq &\sum_{\pi:|\pi|\geq C_1 d(\tau\neq \tau',\Delta)}p_b
\end{eqnarray*}
where the sum is over all self-avoiding $**$-path starting from $\tau\neq\tau'$ whose length is a least $C_1 d(\tau\neq \tau',\Delta)$; $C_1$ is a constant and $d(\tau\neq \tau',\Delta)$ is the graph distance between the edges where $\tau\neq \tau'$ and $\Delta$. Such self-avoiding $**$-path are exactly self-avoiding walks (SAWs) on the graph $G$ obtained from the square grid $\ZZ^2$ by joining each vertex $(p,q)$ with $(p,q\pm 2)$, $(p\pm 1,q\pm 2)$, $(p\pm 2,q\pm2)$, $(p\pm 2,q\pm1)$ and $(p\pm 2,q)$. The number of $n$-step SAWs starting from a fixed vertex is asymptotically $\nu^n$, when $n$ is sufficiently large, where $\nu$ is the connective constant. Then the Theorem follows.
See also \cite{DS85}.
\end{proof}

See the appendix for an investigation of efficient algorithms to check the condition $F(N,\nu^{-1})$. Note that if there exists a positive integer $N$, such that $F(N,\nu^{-1})$ holds, then the strong spatial mixing implies that there exists a unique infinite-volume Gibbs measure for the 1-2 model configurations on the hexagonal lattice. It is proved in Theorem 6.3 of \cite{GL1} that when $\sqrt{a}>\sqrt{b}+\sqrt{c}$, or $\sqrt{b}>\sqrt{a}+\sqrt{c}$, or $\sqrt{c}>\sqrt{a}+\sqrt{b}$, the infinite-volume Gibbs measures for 1-2 model configurations on $\HH$ are not unique. Therefore when $\sqrt{a}>\sqrt{b}+\sqrt{c}$, or $\sqrt{b}>\sqrt{a}+\sqrt{c}$, or $\sqrt{c}>\sqrt{a}+\sqrt{b}$, $F(N,\nu^{-1})$ never holds for any positive integer $N$.

\subsection{Block dynamics and comparison}\label{ssc:bd}

Recall that $\Lambda_n=(V_n,E_n)$ is an $n\times n$ box of $\HH$. Let $F_n$ be the set of faces of $\Lambda_n$. Assume the Assumption of Theorem \ref{t34} holds.
For $1\leq i\leq n+\ell-1$, $1\leq j\leq n+\ell-1$, define the $(i,j)$th block of $\Lambda_n$ to be the vertex set
\begin{eqnarray*}
V_{i,j}=\{(p,q):\max\{i-\ell+1,1\}\leq p\leq \min\{i,n\},\max\{j-\ell+1,1\}\leq q\leq \min\{j,n\}\}.
\end{eqnarray*}
Let $E_{i,j}$ be the set of edges whose both endpoints are in $V_{i,j}$.

Recall also that for an admissible boundary condition $b_{\Lambda_n}$, $\Omega_{\Lambda_n,b_{\Lambda_n}}$ is the set of all the 1-2 model configurations on $\Lambda_n$ with boundary condition $b_{\Lambda_n}$. Let $\Omega_{\Lambda_n,b_{\Lambda_n}}$ be the state space.
The block dynamics for the 1-2 model on $\Lambda_n$ is the Markov chain defined as follows. 
Let 
\begin{eqnarray*}
\Omega_{\sigma,E_{i,j}}=\{\tau\in\Omega_{\Lambda_n,b_{\Lambda_n}}:\tau(e)=\sigma(e),\qquad\mathrm{for\ all}\ e\notin E_{i,j}\},
\end{eqnarray*}
Let $\pi_{i,j}^{\sigma}$ be a probability measure on $\Omega_{\sigma,E_{i,j}}$ such that the probability of a configuration in $\Omega_{\sigma,E_{i,j}}$ is proportional to the product of local weights at each vertex, where the local weights of each vertex are given by parameters $a,b,c>0$.

We now describe the block dynamics Markov chain. At each step
a block $V_{i,j}$ is picked uniformly at random among the $(n-\ell+1)^2$ blocks, and the configuration $\sigma$ is updated according to the $\pi_{i,j}^{\sigma}$ on $\Omega_{\sigma, E_{i,j}}$. 
Define the transition matrix
\begin{eqnarray*}
P_{E_{i,j}}(\sigma,\tau)=\pi_{i,j}^{\sigma}(\tau).
\end{eqnarray*}
for $\sigma,\tau\in \Omega_{\Lambda_n,b_{\Lambda_n}}$. In particular, if $\tau\notin \Omega_{\sigma,E_{i,j}}$, then $P_{E_{i,j}}(\sigma,\tau)=0$. The block dynamics has the transition matrix
\begin{eqnarray}
\tilde{P}:=\frac{1}{(n+\ell-1)^2}\sum_{i=0}^{n+\ell-1}\sum_{j=0}^{n+\ell-1}P_{E_{i,j}}.\label{bdn}
\end{eqnarray}

Let $\pi$ be the probability measure on $\Omega_{\Lambda_n,b_{\Lambda_n}}$, such that the probability of a configuration is proportional to the product of local weights at each vertex, and the local weights are given by parameters $a,b,c>0$; see Figure \ref{fig:sign}.
Again the lemma below is straightforward. 
\begin{lemma}The block dynamics with state space $\Omega_{\Lambda_n,b_{\Lambda_n}}$ and transition matrix given  by (\ref{bdn}) is an irreducible, reversible, aperiodic Markov chain whose stationary distribution is the distribution $\pi$ on $\Omega_{\Lambda_n,b_{\Lambda_n}}$.
\end{lemma}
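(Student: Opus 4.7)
The plan is to verify the four properties (irreducibility, reversibility, stationarity, and aperiodicity) by reducing each of them to the corresponding statements for the single-edge/single-face dynamics proved in Propositions \ref{ir}--\ref{ap}, combined with the standard behaviour of a heat-bath block update. Since this is the block analogue of the trio of propositions in Section \ref{sc:ir}, I expect a short proof with no serious obstacle; the main care is simply to make sure the block size $\ell$ is large enough that every single-edge and every single-face move used in Proposition \ref{ir} is contained in at least one block $E_{i,j}$.

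For irreducibility, I would first note that each move of type (2) is supported on a single edge and each move of type (3) is supported on the six edges of a single face, so as long as $\ell\ge 2$ every such move lies inside some block $E_{i,j}$ of $\Lambda_n$. If $\sigma$ and $\tau$ differ by such a move inside $E_{i,j}$, then $\tau\in\Omega_{\sigma,E_{i,j}}$ and $\pi_{i,j}^\sigma(\tau) > 0$ since $\pi$ assigns positive mass to every 1-2 configuration in the nonempty finite set $\Omega_{\sigma,E_{i,j}}$. Hence $\tilde{P}(\sigma,\tau) > 0$. Any path of single-edge/single-face moves connecting two configurations, as constructed in the proof of Proposition \ref{ir}, therefore lifts to a path in the block chain, giving irreducibility.

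For reversibility and stationarity, the key point is that the heat-bath kernel satisfies
\[
P_{E_{i,j}}(\sigma,\tau) \;=\; \frac{\pi(\tau)}{\pi\bigl(\Omega_{\sigma,E_{i,j}}\bigr)} \quad\text{if }\tau\in\Omega_{\sigma,E_{i,j}},
\]
and vanishes otherwise; in particular, if $P_{E_{i,j}}(\sigma,\tau) > 0$ then $\Omega_{\sigma,E_{i,j}} = \Omega_{\tau,E_{i,j}}$. Detailed balance for each $P_{E_{i,j}}$ then collapses to the symmetric identity
\[
\pi(\sigma)\,\frac{\pi(\tau)}{\pi(\Omega_{\sigma,E_{i,j}})} \;=\; \pi(\tau)\,\frac{\pi(\sigma)}{\pi(\Omega_{\tau,E_{i,j}})}.
\]
Averaging over blocks preserves detailed balance, so $\tilde{P}$ is reversible with respect to $\pi$; hence $\pi$ is stationary, and by irreducibility it is the unique stationary distribution.

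Aperiodicity is immediate: for every $\sigma$ and every block $E_{i,j}$, the configuration $\sigma$ itself belongs to $\Omega_{\sigma,E_{i,j}}$, so $P_{E_{i,j}}(\sigma,\sigma) = \pi_{i,j}^\sigma(\sigma) > 0$, and averaging yields $\tilde{P}(\sigma,\sigma) > 0$. This rules out any periodicity and completes the verification.
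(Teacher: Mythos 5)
Your proof is correct and is exactly the standard verification the paper has in mind when it declares the lemma ``straightforward'' and omits the argument: irreducibility by noting each single-edge/single-face move of Proposition \ref{ir} is supported in some block, reversibility from the symmetry of the heat-bath kernel $\pi(\sigma)\pi(\tau)/\pi(\Omega_{\sigma,E_{i,j}})$ together with $\Omega_{\sigma,E_{i,j}}=\Omega_{\tau,E_{i,j}}$, and aperiodicity from $P_{E_{i,j}}(\sigma,\sigma)>0$. No gaps.
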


We shall apply the comparison theorem which relates the spectral gap of the block dynamics to that of the single-site dynamics.

Let $\sigma,\tau\in \Omega_{\Lambda_n,b_{\Lambda_n}}$, such that $\sigma$ and $\tau$ differ only in $E_{ij}$. Let $D_{\sigma}$ (resp.\ $D_{\tau}$) be the dimer configuration on $\HH_{\Delta,\Lambda_n}$ corresponding to $\sigma$ (resp.\ $\tau$). We have
\begin{eqnarray}
d(D_{\sigma},D_{\tau})\leq C_1 \ell^3,\label{dub}
\end{eqnarray}
where $C_1>0$ is a constant independent of $\ell$, and $d(D_{\sigma},D_{\tau})$ is defined by (\ref{d12}).

Let $\xi=(\sigma_0,\tau_0)$, where $\sigma_0$ and $\tau_0$ agree everywhere except at a single edge $e$ or a single face $f$, as defined in (2) or (3) of Section \ref{mch}. Let $Q(\xi)$, $\tilde{Q}(\xi)$ be defined as in (\ref{q}), (\ref{tq}), respectively.

For $\tau\in \Omega_{\sigma,E_{ij}}$ for some $(i,j)$, $\Gamma_{\sigma\tau}$ is a path consisting of single-edge or single-face movements from $\sigma$ to $\tau$, and $|\Gamma_{\sigma\tau}|$ is the number of such movements. We fix a path $\Gamma_{\sigma\tau}$ for each pair $(\sigma,\tau)$ such that $\tau\in\Omega_{\sigma,E_{i,j}}$ for some $(i,j)$. The path $\Gamma_{\sigma\tau}$ is obtained from the process as described in the proof of Lemma \ref{ir}. Then we have
\begin{eqnarray*}
|\Gamma_{\sigma\tau}|\leq d(D_{\sigma},D_{\tau})\leq C_1\ell^3,
\end{eqnarray*}
by (\ref{dub}).
Then
\begin{eqnarray*}
R_{\xi}:&=&\frac{1}{Q(\xi)}\sum_{\sigma,\tau:\xi\in \Gamma_{\sigma\tau}}\pi(\sigma)\tilde{P}(\sigma,\tau)|\Gamma_{\sigma\tau}|\\
&\leq& C_1\ell^3\sum_{\sigma,\tau:\xi\in \Gamma_{\sigma\tau}}\frac{1}{(n+\ell-1)^2}\sum_{i=1}^{n+\ell-1}\sum_{j=1}^{n+\ell-1}\frac{\tilde{P}_{E_{i,j}}(\sigma,\tau)\pi(\sigma)}{P(\sigma_0,\tau_0)\pi(\sigma_0)}
\end{eqnarray*}
Since $\sigma$ and $\sigma_0$ differ only on edges of $E_{ij}$, we have
\begin{eqnarray*}
\left(\frac{\min\{a,b,c\}}{\max\{a,b,c\}}\right)^{2\ell^2}\leq \frac{\pi(\sigma)}{\pi(\sigma_0)}\leq \left(\frac{\max\{a,b,c\}}{\min\{a,b,c\}}\right)^{2\ell^2}
\end{eqnarray*}
 Let
\begin{eqnarray}
M&:=&\max_{0\leq i\leq n-\ell, 0\leq j\leq n-\ell}|E_{i,j}|\leq C_2\ell^2\label{m}\\
M^*&:=&\max\{\max_{e\in E_n}|\{(i,j),e\in E_{i,j}\}|,\max_{f\in F_n}|\{(i,j):f\subseteq E_{i,j}\}|\}\leq C_2\ell^2\label{ms}
\end{eqnarray}
We have
\begin{eqnarray*}
\frac{\tilde{P}_{E_{i,j}}(\sigma,\tau)}{P(\sigma_0,\tau_0)}\leq \frac{2(|E_n|+|F_n|)[\max\{a,b,c\}]^2}{[\min\{a,b,c\}]^2}\pi_{i,j}^{\sigma}(\tau),
\end{eqnarray*}
and
\begin{eqnarray}
R_{\xi}&\leq&\frac{2C_1\ell^3(|E_n|+|F_n|)}{(n-\ell+1)^2}\left(\frac{\max\{a,b,c\}}{\min\{a,b,c\}}\right)^{2\ell^2+2}\sum_{\sigma,\tau:\xi\in \Gamma_{\sigma\tau}}\sum_{i=0}^{n-\ell}\sum_{j=1}^{n-\ell}\pi_{i,j}^{\sigma}(\tau)\notag\\
&\leq & C_3\ell^3 M^*\left(\frac{\max\{a,b,c\}}{\min\{a,b,c\}}\right)^{2\ell^2+2}
\leq  C_5\ell^5 \left(\frac{\max\{a,b,c\}}{\min\{a,b,c\}}\right)^{2\ell^2+2}\label{rn}
\end{eqnarray}
where the last inequality follows from (\ref{ms}).

We obtain the lemma below

\begin{lemma}Let $\gamma_B$ (\resp,\ $\gamma$) be the spectral gap for the block (\resp,\ single-edge or single face) dynamics with state space $\Omega_{\Lambda_n,b_{\Lambda_n}}$, then
\begin{eqnarray*}
\gamma_B\leq C_5\ell^5\left(\frac{\max\{a,b,c\}}{\min\{a,b,c\}}\right)^{2\ell^2+2}\gamma,
\end{eqnarray*} 
where $C_5>0$ is a  constants.
\end{lemma}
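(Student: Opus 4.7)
The plan is to apply the Comparison Theorem (Theorem~\ref{tm41}) directly, taking $\tilde P$ to be the block-dynamics transition matrix defined in (\ref{bdn}) and $P$ the single-edge/single-face transition matrix of (\ref{tmt}). A key simplification is that both chains share exactly the same stationary distribution $\pi$ from (\ref{gm}), by Proposition~\ref{re}, so the prefactor $\max_{x\in\Omega}\pi(x)/\tilde\pi(x)$ appearing in the conclusion of Theorem~\ref{tm41} reduces to $1$ and drops out.

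It then remains to bound the congestion ratio $B$ defined by (\ref{cr}). For each pair $(\sigma,\tau)$ with $\tilde P(\sigma,\tau)>0$, i.e.\ $\tau\in\Omega_{\sigma,E_{i,j}}$ for some $(i,j)$, I would use the canonical path $\Gamma_{\sigma\tau}$ through single-edge and single-face moves built from the argument in the proof of Proposition~\ref{ir}. The length of such a path is at most $d(D_\sigma,D_\tau)\leq C_1\ell^3$ by (\ref{dub}). Combining this length bound with the weight ratio bounds on $\pi(\sigma)/\pi(\sigma_0)$, the explicit formula for $P(\sigma_0,\tau_0)$ from (\ref{tmt}), and the multiplicity bounds (\ref{m})--(\ref{ms}) for how often a fixed $\xi=(\sigma_0,\tau_0)\in E$ can appear on such paths, I arrive at
\begin{eqnarray*}
B \;=\; \max_{\xi\in E} R_\xi \;\leq\; C_5\ell^5\left(\frac{\max\{a,b,c\}}{\min\{a,b,c\}}\right)^{2\ell^2+2},
\end{eqnarray*}
which is precisely the estimate already carried out and recorded in (\ref{rn}). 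Inserting this bound on $B$ into Theorem~\ref{tm41} yields the stated inequality.

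The heavy lifting — the congestion ratio computation — has already been performed in the discussion culminating in (\ref{rn}), so no additional analytic obstacle arises. The only thing to check is the identification of the quantity $R_\xi$ with the congestion ratio of Theorem~\ref{tm41} and the fact that $\pi=\tilde\pi$; once these are observed, the lemma follows immediately.
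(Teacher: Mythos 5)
Your proposal is correct and matches the paper's argument exactly: the paper's proof of this lemma is precisely an invocation of Theorem~\ref{tm41} together with the congestion-ratio bound (\ref{rn}), with the prefactor trivial since both chains share the stationary distribution $\pi$. No differences worth noting.
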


\begin{proof}The lemma follows from Theorem \ref{tm41} and (\ref{rn}).
\end{proof}

Let $\gamma_B^*$ (\resp,\ $\gamma^*$) be the absolute spectral gap for the block (\resp,\ single-edge or single face) dynamics with state space $\Omega_{\Lambda_n,b_{\Lambda_n}}$
 Let $t_{mix}$ (resp.\ $t_{rel}$) be the mixing time (resp.\ relaxation time) for the Markov chain with transition matrix defined by (\ref{tmt}) and state space $\Omega_{\Lambda_n,b_{\Lambda_n}}$.
Then by Lemma \ref{l52} and Theorem \ref{tm42}, we have
\begin{eqnarray*}
t_{mix}(\epsilon)\leq \log\left(\frac{|\Omega_{\Lambda_n,b_n}|}{\epsilon}\right)t_{rel}\leq  C_5\ell^5 4^{C_4\ell^2}\log\left(\frac{|\Omega_{\Lambda_n,b_n}|}{\epsilon}\right)\frac{1}{\gamma_B}.
\end{eqnarray*}
where we use
\begin{eqnarray*}
t_{rel}=\frac{1}{\gamma^*}=\frac{1}{\gamma},
\end{eqnarray*}
by Proposition \ref{ap}.

We now describe how to couple the block dynamics started from $\sigma$ with the block dynamics started from $\tau$, in the case that $\sigma,\tau\in \Omega_{\Lambda,b_{\Lambda}}$ and differ at only one edge $e$ or only one face $f$, as described in (2) and (3). Always select the same block to update the two chains. If a block is selected which contains the edge $e$ or the face $f$, then the two chains can be updated together, and the difference of $\sigma$ and $\tau$ can be eliminated. If a block is selected which contains no vertices of the edge $e$ and no vertices of the face $f$, then the two chains are also updated together, and after the update, the two new configurations still differ at only the edge $e$ or the face $f$. Two other cases need to be considered
\begin{Alist}
\item $e$ is incident to exactly one vertex in the selected block; or
\item at least one vertex in $f$ is in the selected block and at least one vertex in $f$ is outside the selected block.
\end{Alist}

We first consider case A. Assume that a block $V_{ij}$ is selected. Let $e'\in E_{ij}$ be an edge. By Theorem \ref{t34}, we have
\begin{eqnarray*}
\mathbb{P}(e'\ \mathrm{has\ different\ states\ in\ the\ update})\leq C_7 e^{-C_8\mathrm{dist}(e,e')}
\end{eqnarray*}
where $\partial V_{{i,j}}$ consists of all the edges with exactly one vertex in $V_{ij}$ and one vertex outside $V_{ij}$.
Then we can compute 
\begin{eqnarray*}
\mathbb{E}(\#\ \mathrm{of\ edges\ in}\ V_{ij}\ \mathrm{such\ that\ the\ two\ updates\ disagree})&\leq& C_7\sum_{e'\in E_{ij}}e^{-C_8\mathrm{dist}(e,e')}\leq C_9,\\
\end{eqnarray*}
where $C_9>0$ is a constant independent of $\ell$.

Let $\rho(\sigma,\tau)$ be the number of edges with different states in $\sigma$ and $\tau$. First let us treat the case when $\sigma$ and $\tau$ differ at exactly one edge $e$. Then $\rho(\sigma,\tau)=1$. Let $(X_1,Y_1)$ be the pair of configurations obtained after one step of coupling. Since $\ell^2$ of the $(n+\ell-1)^2$-blocks will contain the edge $e$, and at most $4\ell$ blocks have $e$ as an boundary edge, we have
\begin{eqnarray*}
\mathbb{E}_{\sigma,\tau}\rho(X_1,Y_1)\leq 1-\frac{\ell^2}{(n+\ell-1)^2}+\frac{4\ell C_9}{(n+\ell-1)^2}
\end{eqnarray*}
Choose $\ell=4C_9+1$, then
\begin{eqnarray}
\mathbb{E}_{\sigma,\tau}\rho(X_1,Y_1)\leq\left( 1-\frac{1}{n^2}\right)\rho(\sigma,\tau).\label{ctn}
\end{eqnarray}
If $\sigma,\tau$ differs at exactly one face $f$, similar arguments will also give us (\ref{ctn}).

By \ref{tm53}, we have
\begin{eqnarray*}
\gamma_B\geq \gamma_{B,*}\geq \frac{1}{n^2},
\end{eqnarray*}
where $\gamma_B$ (resp.\ $\gamma_{B,*}$) is the spectral gap (absolute spectral gap) of the block dynamics. Since
\begin{eqnarray*}
\log\left(\frac{|\Omega_{\Lambda_n,b_{\Lambda_n}}|}{\epsilon}\right)\leq C(\epsilon)n^2,
\end{eqnarray*}
By  (\ref{tm}), (\ref{s1}), (\ref{s2}), we have
\begin{eqnarray*}
t_{mix}\leq C n^4,
\end{eqnarray*}
for the Markov chain described by (\ref{tmt}). By Theorem \ref{tm44}, we have
\begin{eqnarray*}
t_{mix}\geq B n^2,
\end{eqnarray*}
for constants $C,B>0$ independent of $n$. This completes the proof of Theorem \ref{m2}.

Next we consider the mixing time of the block dynamics with transition matrix given by (\ref{bdn}). 

\begin{theorem}\label{tm67}Let $\tilde{P}$ defined by (\ref{bd}) be the transition matrix for the block dynamics of 1-2 model configurations on $\Omega_{\Lambda_{k,n},b_{k,n}}$, then
\begin{eqnarray*}
B' n^2 \leq \tilde{t}_{mix}\leq C' n^2\log n,
\end{eqnarray*}
where $B'>0$, $C'>0$ are positive constants independent of $n$, and $\tilde{t}_{mix}$ is defined by (\ref{mtm}) with respect to $\tilde{P}$.
\end{theorem}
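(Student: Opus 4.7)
The plan is to mirror the two-sided argument used in Theorem \ref{tm54}, adapted to the 2D geometry of the $n\times n$ box. For the upper bound, I would start from the contraction estimate (\ref{ctn}) already established in Section \ref{ssc:bd}: when $\sigma,\tau\in\Omega_{\Lambda_n,b_{\Lambda_n}}$ differ either at a single edge (case (2)) or at the six edges of a single face (case (3)), the one-step coupling of the block dynamics satisfies $\mathbb E_{\sigma,\tau}\rho(X_1,Y_1)\le(1-n^{-2})\rho(\sigma,\tau)$, where $\rho$ is the edge Hamming distance. Since this contraction is only stated for ``adjacent'' pairs under the generators (2)--(3), I would combine it with the path coupling theorem (Bubley--Dyer, see Chapter 14 of \cite{LPW}): Proposition \ref{ir} and the double-dimer cycle decomposition developed in Lemmas \ref{l33}--\ref{l34} allow one to connect any pair of admissible configurations by a sequence of single-edge and single-face moves, so path coupling promotes the one-step contraction to every pair with the same rate $1-n^{-2}$.

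With a global contraction at rate $\alpha=n^{-2}$ in place, Theorem \ref{tm45} applies. The Hamming diameter satisfies $D_\rho(\Omega_{\Lambda_n,b_{\Lambda_n}})\le|E_n|=O(n^2)$, so
\begin{eqnarray*}
\tilde t_{mix}(1/4)\le\left\lceil\frac{\log D_\rho+\log 4}{\alpha}\right\rceil\le C' n^2\log n,
\end{eqnarray*}
giving the desired upper bound.

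For the lower bound, I would apply Theorem \ref{tm44} to the transition graph of $\tilde P$. The block side length $\ell$ was fixed as the constant $4C_9+1$ in Section \ref{ssc:bd}, so every block $V_{i,j}$ contains only $O(1)$ interior edges and one step of $\tilde P$ can flip at most $O(1)$ edges. On the other hand, starting from any fixed admissible reference configuration $\sigma\in\Omega_{\Lambda_n,b_{\Lambda_n}}$ and rotating it along $\Theta(n^2)$ pairwise-disjoint Type-II cycles (cf.\ Lemma \ref{sm}) produces a second admissible configuration $\tau$ with $\rho(\sigma,\tau)=\Theta(n^2)$. Connecting $\sigma$ and $\tau$ in the block-dynamics graph therefore requires $\Omega(n^2)$ steps, so its diameter $L$ is $\Omega(n^2)$ and Theorem \ref{tm44} yields $\tilde t_{mix}\ge L/2\ge B' n^2$.

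The hardest step is likely to be the path coupling bookkeeping: the two adjacency types change $1$ and $6$ edges respectively, so one has to verify that declaring the Hamming distance as the pre-metric on these elementary pairs recovers (up to a universal constant) the Hamming metric on all of $\Omega_{\Lambda_n,b_{\Lambda_n}}$. The constructive argument in the proof of Proposition \ref{ir} handles this by decomposing any contractible cycle in $D_\sigma\Delta D_\tau$ into a sequence of Type-I and Type-II cycle rotations whose total edge count matches the Hamming difference; once this additivity is in hand the path coupling bound is routine, and any constant loss in the contraction rate would only change the constant $C'$ in the final $n^2\log n$ estimate.
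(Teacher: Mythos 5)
Your proposal is correct and follows essentially the same route as the paper: the paper's proof of Theorem \ref{tm67} is a one-line citation of the contraction bound (Theorem \ref{tm45}, fed by the coupling estimate (\ref{ctn}) with $\alpha=n^{-2}$ and polynomial diameter) for the upper bound and the diameter bound (Theorem \ref{tm44}) for the lower bound, exactly as you do, though the paper inadvertently swaps which theorem gives which bound. Your additional remarks on path coupling and on exhibiting two configurations at Hamming distance $\Theta(n^2)$ supply details the paper leaves implicit, but do not change the argument.
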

\begin{proof}The lower bound $B' n^2$ is obtained following Theorem \ref{tm45}, and the upper bound is obtained following Theorem \ref{tm44}.
\end{proof}

\section{Appendix: How to check the condition $F(N,\nu^{-1})$}
By Theorem \ref{t34}, in order to prove the strong mixing condition for Gibbs measures of the 1-2 model on all the sufficiently large boxes with uniform constant, it suffices to show that $F(N,\nu^{-1})$ holds for some positive integer $N$. In this section, we investigate efficient algorithms to check the condition $F(N,\nu^{-1})$.

By the measure-preserving correspondence of 1-2 model configurations on $\HH$ and dimer configurations on $\HH_{\Delta}$, as well as the domain Markov property, for the boundary condition of an $n\times n$ box $\Lambda_n$ of $\HH$, it suffices to consider dimer configurations in all the hexagons crossing $\partial\Lambda_n$. (For the $3\times 3$ box $\Lambda_3$, $\partial\Lambda_3$ is given by the blue lines in Figure \ref{fig:cl3}).

Each dimer configuration on $\HH_{\Delta}$ satisfies the following two constraints
\begin{itemize}
\item each vertex of $\HH$ has exactly one incident preset bisector edge in $\HH_{\Delta}$; and
\item around each hexagon of $\HH$, there are an even number of present bisector edges.
\end{itemize}

Indeed, for any configuration on bisector edges satisfying the above two constraints, the configuration can be uniquely extended to a dimer configuration on $\HH_{\Delta}$. As a result, the influence of boundary conditions to dimer configurations in $\Lambda_n$ depends only on the parity of the number of present bisector edges outside $\Lambda_n$ in each hexagon crossing $\partial\Lambda_n$. Let $n\geq 3$. All the hexagons crossed by $\partial\Lambda_n$ can be classified into 3 different types:
\begin{enumerate}
\item The hexagon has 5 vertices outside $\Lambda_n$, and 1 vertex inside $\Lambda_n$. The two hexagons on the left and right corners of Figure \ref{fig:cl3} are of this type.
\item The hexagon has 4 vertices outside $\Lambda_n$, and 2 vertices inside $\Lambda_n$. The two hexagons on the top and bottom corners of Figure \ref{fig:cl3} are of this type.
\item The hexagon has 3 vertices outside $\Lambda_n$ and 2 vertices inside $\Lambda_n$. All the hexagons crossing $\partial\Lambda_n$ but not on the corners are of this type.
\end{enumerate}

We say a hexagon $h$ crossing $\partial \Lambda_n$ is \textbf{positive} (resp.\ \textbf{negative}) with respect to a boundary condition $\tau$, if in $h$, an even (resp.\ odd) number of incident bisector edges to vertices outside $\Lambda_n$ are present in $\tau$. Note that for an admissible boundary condition, there are an even number of hexagons crossing $\partial\Lambda_n$ that are negative with respect to the boundary condition.

We will treat different types of hexagons differently.

If a Type (1) hexagon $h_1$ is positive (resp. negative) with respect to the boundary condition, let $e$ be the bisector edge in $h_1$ incident to the unique vertex $v$ of $h_1$ in $\Lambda_n$; then $e$ must be absent (resp.\ present), in which case we remove $e$ (resp.\ all the incident edges of $v$), and treat the two adjacent hexagons of $h_1$ crossing $\partial\Lambda_n$ as Type-3 hexagons (Type-2 hexagons).

If a Type (2) hexagon $h_1$ is positive (resp.\ negative) with respect to the boundary condition, then we replace the graph $\HH_{\Delta}$ in the hexagon by an edge (resp.\ a vertex and two edges), see the left graph (resp.\ the right graph) of Figure \ref{fig:g2}.

\begin{figure}[htbp]
\includegraphics[width=0.2\textwidth]{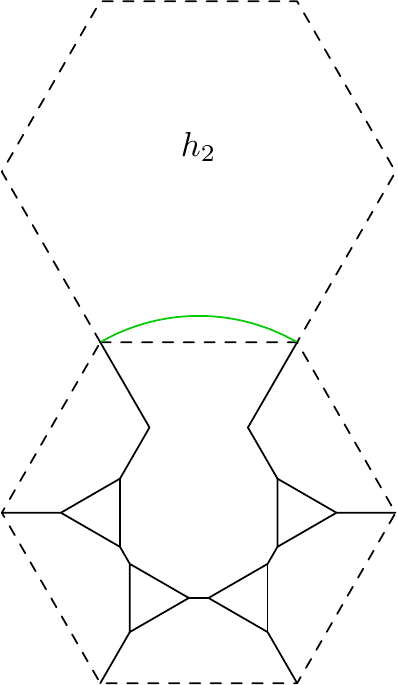}\qquad\qquad\qquad\qquad\includegraphics[width=0.2\textwidth]{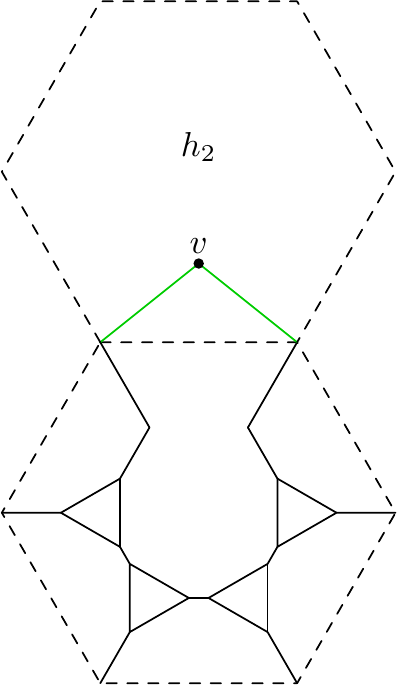}
\caption{Replace a positive (resp.\ negative) Type (2) hexagon $h_2$ on the boundary by a gadget represented by green lines on the left (resp.\ right) graph.}
\label{fig:g2}
\end{figure}

If a Type (3) hexagon $h_3$ is positive (resp.\ negative) with respect to the boundary condition, then we replace the graph $\HH_{\Delta}$ in the hexagon by a gadget in the left graph (resp.\ the right graph) Figure \ref{fig:g3}.

\begin{figure}[htbp]
\includegraphics[width=0.35\textwidth]{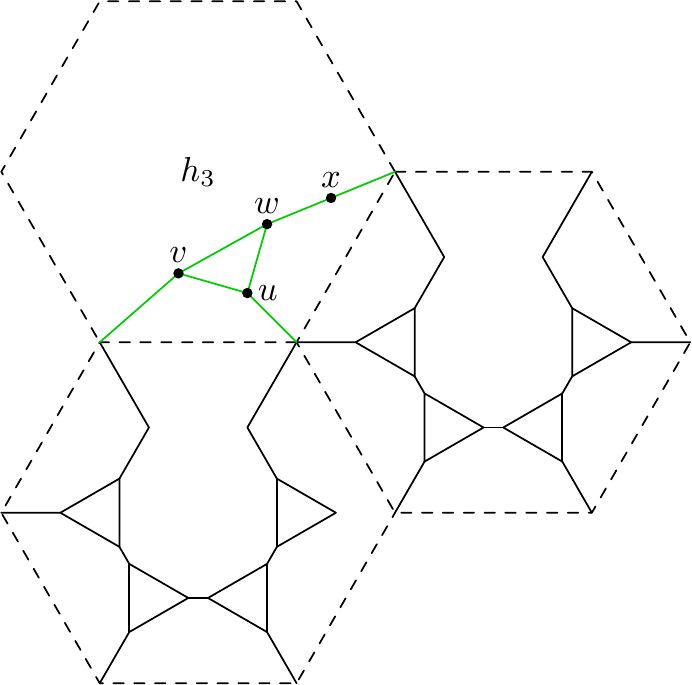}\qquad\qquad\qquad\qquad\includegraphics[width=0.35\textwidth]{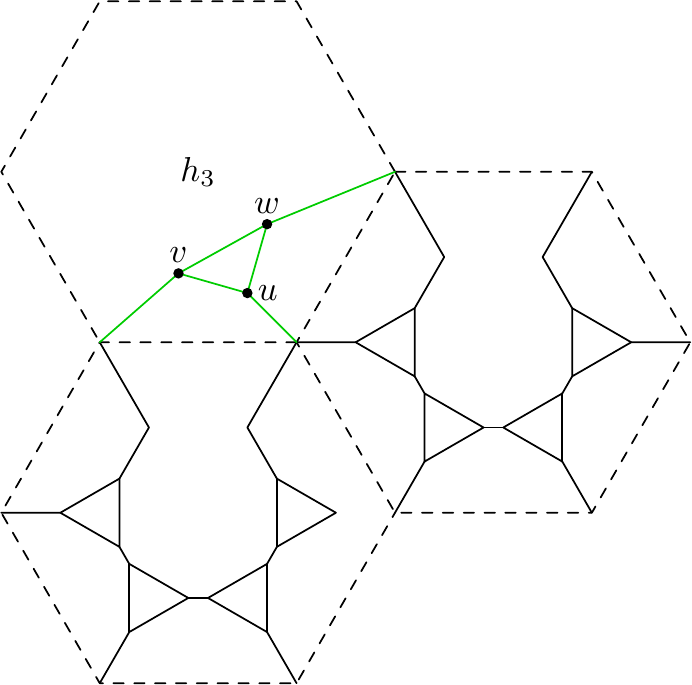}
\caption{Replace a positive (resp.\ negative) Type (3) hexagon $h_3$ on the boundary by a gadget represented by green lines on the left (resp.\ right) graph.}
\label{fig:g3}
\end{figure}

Therefore, to consider the Gibbs measures for dimer configurations on $\Lambda_n$ with different boundary conditions, it suffices to consider Gibbs measures on different finite graphs. After given a clockwise odd orientation (which always exists on a planar graph), these can be computed by computing determinants and Pfaffians.

\bigskip
\noindent\textbf{Acknowledgements.} The author thanks Richard Kenyon for suggesting the problem solved in this paper, and Yuval Peres for suggesting the path method and comparison with block dynamics.
The author acknowledges support from National Science Foundation under grant 1608896.

\bibliography{rr12mc}
\bibliographystyle{amsplain}

\end{document}